\newcommand{\Arg}{{\rm Arg}}
\newcommand{\Set}[1]{\left\{ #1 \right\}}
\newcommand{\Z}{{\mathbb Z}}
\newcommand{\KK}{K}
\newcommand{\barz}{{\bar z}}
\newcommand{\calF}{{\cal F}}
\newcommand{\calV}{{\cal V}}
\newcommand{\cx}{{\mathbb C}}
\newcommand{\fdom}{D}
\newcommand{\fracpart}[1]{\langle #1 \rangle}
\newcommand{\grad}{{\rm grad}}
\newcommand{\halfplane}{{\mathbb H}}
\newcommand{\half}{\frac{1}{2}}
\newcommand{\id}{{\rm id}}
\newcommand{\im}{{\rm Im}}
\newcommand{\interior}{{\rm int}}
\newcommand{\punccx}{{\mathbb C^\times}}
\newcommand{\rat}{{\mathbb Q}}
\newcommand{\real}{{\mathbb R}}
\newcommand{\re}{{\rm Re}}
\newcommand{\rme}{{\rm e}}
\newcommand{\rmi}{{\sqrt{-1}}}
\newcommand{\set}[1]{\{ #1 \}}
\newcommand{\spiral}{S}
\newcommand{\tildez}{z}
\newcommand{\uu}{u}
\newcommand{\vv}{v}
\newcommand{\ww}{w}
\newcommand{\zahl}{{\mathbb Z}}
\newtheorem{theorem}{Theorem}
\newtheorem{corollary}[theorem]{Corollary}
\newtheorem{lemma}[theorem]{Lemma}
\newtheorem{proposition}[theorem]{Proposition}
\title{Area convergence of Voronoi cells on spiral lattices}
\author{Yoshikazu Yamagishi, Takamichi Sushida,
  and Jean-Fran\c{c}ois Sadoc}
\begin{document}

\maketitle

\begin{abstract}
It is shown that the area of Voronoi cells for a generalized
Archimedean spiral lattice converges under some scale normalization,
if the angle parameter is badly approximable.
\end{abstract}

\medskip

Keywords:
Voronoi cell,
Delone sets,
spiral,
phyllotaxis.

\section{Introduction}

Phyllotaxis is a term coined by botanists that describes the
organisation of florets in flowers (like daisy) or positions of leaves
along stems of trees or plants.  For a physicist, phyllotaxis can be
seen as the result of a maximum packing of disks on a plane surface.
If disks have some softness and the packing is confined within a
circular border, a phyllotactic solution can give a best packing
fraction \cite{sadoc_2012_a}.  Numerical analysis on the areas of
Voronoi cells of phyllotactic patterns have suggested a large
homogeneity of these areas \cite{sadoc_2013}.  The aim of this paper
is to give a mathematical proof for the convergence of the area of
Voronoi cells for Fermat and Archimedean spiral lattices.

Let $\alpha>0$, $\theta\in \real$.  
 The set 
\[
 \spiral = \spiral(\alpha,\theta) := \set{z_{j} := j^\alpha
  \rme^{j\theta\rmi} : j\in\zahl_{\ge0}} 
\]
is called a (generalized) Archimedean spiral lattice with an exponent
$\alpha$.  It is called a Fermat spiral lattice if $\alpha= \half$.
In botany, the angle parameter $\theta$ is called `divergence'
\cite{coxeter}.  The Voronoi cell of $z_{j}$ in $\spiral$ is given by
\[
V(z_{j}, \spiral)
= \set{\zeta \in \cx:  |\zeta - z_{j}| \le |\zeta - z'|,
 \forall z' \in \spiral}.
\]
The Voronoi tessellation for the Fermat spiral lattice $\alpha=\half$
has been studied as a geometric model of spiral phyllotaxis
\cite{dixon, hotton}.

Sadoc et al. \cite{sadoc_2013} studied the Fermat spiral lattice with
the golden section divergence angle $\theta= 2\pi\tau$, $\tau = \frac{1+\sqrt{5}}{2}$,
and observed the convergence of the area $|V(z_{j},\spiral)|$ as
$j\to\infty$, as well as the existence of the grain boundary, its
quasiperiodicity and self-similarity under the substitution rule.
Yamagishi et al.~\cite{yamagishi_2018_a} proved the existence and the
quasiperiodicity of the grain boundary for the case of the Archimedean
spiral lattice $\alpha=1$, by considering the topological suspension. 
The case for general $\alpha$ is addressed in
\cite{gar}.
See Figure \ref{Figure_1_1} for the Voronoi tessellations on $S = S(\alpha, 2\pi\tau)$, with $\alpha = 1, \frac{1}{2}, \frac{1}{4}$.

This paper shows the following theorem.  An irrational number is
called badly approximable if its continued fraction expansion has
bounded partial quotients.

\begin{theorem}
  \label{18}
  Suppose that $\theta/2\pi$ is badly approximable.  Then
  \[
  \lim_{j\to\infty} j^{1-2\alpha}|V(z_{j},\spiral)| = 2\pi\alpha.
  \]
\end{theorem}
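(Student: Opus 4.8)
\section*{Proof proposal}

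The plan is to pass to logarithmic coordinates, in which the spiral lattice becomes, locally, a genuine two–dimensional lattice whose covolume can be read off directly; the area formula then follows by transporting this covolume back through the exponential map. Write $w=\log z$, using the branch that is continuous along the spiral, so that $z_{j}\mapsto w_{j}=\alpha\log j+\rmi\,j\theta$. Fix a large index $n$ and examine the configuration near $w_{n}$. For $j=n+k$,
\[
w_{n+k}-w_{n}=\alpha\log\!\Big(1+\frac{k}{n}\Big)+\rmi\,k\theta
 =\frac{\alpha k}{n}+\rmi\,k\theta+O\!\Big(\frac{k^{2}}{n^{2}}\Big),
\]
so that, after reduction modulo $2\pi\rmi$, the neighbours of $w_{n}$ are approximated by the points of the planar lattice
\[
\Lambda_{n}=\zahl\Big(\tfrac{\alpha}{n}+\rmi\theta\Big)+\zahl\,2\pi\rmi .
\]
The two generators $(\tfrac{\alpha}{n},\theta)$ and $(0,2\pi)$ span a parallelogram of area $\tfrac{\alpha}{n}\cdot 2\pi=\tfrac{2\pi\alpha}{n}$, so the covolume of $\Lambda_{n}$ is $2\pi\alpha/n$. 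Since the Voronoi cells of a lattice tile the plane by translation, $|V(0,\Lambda_{n})|=2\pi\alpha/n$ \emph{exactly}, irrespective of the cell's shape.

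The heart of the argument is to show that the true Voronoi cell of $w_{n}$ converges, under the identification above, to $V(0,\Lambda_{n})$, and this requires controlling the shape of $\Lambda_{n}$. A nonzero vector of $\Lambda_{n}$ with first generator used $k\neq0$ times has length $\sqrt{(\alpha k/n)^{2}+(2\pi\|k\theta/2\pi\|)^{2}}$, where $\|\cdot\|$ denotes the distance to the nearest integer. This is where the hypothesis enters: badly approximable means $\|k\theta/2\pi\|\ge c/|k|$ for some $c>0$ and all $k\neq0$, so by the arithmetic–geometric mean inequality every such vector has length $\gtrsim n^{-1/2}$; conversely Minkowski's convex body theorem yields a nonzero vector of length $\lesssim\sqrt{\det\Lambda_{n}}\asymp n^{-1/2}$. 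Hence both successive minima of $\Lambda_{n}$ are $\asymp n^{-1/2}$, the lattice is uniformly well–rounded, and the diameter of $V(0,\Lambda_{n})$ is $\asymp n^{-1/2}$. Consequently only neighbours with $|k|\lesssim\sqrt n$ are relevant, and for these the Taylor error $O(k^{2}/n^{2})=O(1/n)$ is of smaller order than the cell size $n^{-1/2}$. A standard stability estimate for Voronoi cells of well–rounded point sets then gives $|V(w_{n},\{w_{j}\})|=\tfrac{2\pi\alpha}{n}\,(1+o(1))$.

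Finally I transport back through the conformal map $z=\rme^{w}$. Near $z_{n}$ the map $\log$ agrees to first order with the similarity $z\mapsto w_{n}+(z-z_{n})/z_{n}$ of ratio $|z_{n}|^{-1}=n^{-\alpha}$, and a similarity carries the Euclidean Voronoi cell of $z_{n}$ onto that of $w_{n}$ exactly; the nonlinear correction is $O(\mathrm{diam}^{2})=O(1/n)$, negligible against the cell. Since the area Jacobian is $|\rme^{w}|^{2}=|z|^{2}=n^{2\alpha}(1+O(n^{-1/2}))$ throughout the cell, we obtain
\[
|V(z_{n},\spiral)|=n^{2\alpha}\cdot\frac{2\pi\alpha}{n}\,(1+o(1))=2\pi\alpha\,n^{2\alpha-1}(1+o(1)),
\]
and multiplying by $n^{1-2\alpha}$ yields the claim.

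The main obstacle is making the convergence of the spiral's Voronoi cell to $V(0,\Lambda_{n})$ rigorous and uniform in $n$. Two points require care: first, one must verify that no index with large $|k|$, despite the angular wrap–around on the cylinder, contributes a spurious nearby Voronoi neighbour — this follows from the monotonicity of $k\mapsto\re(w_{n+k})=\alpha\log(n+k)$, which keeps distant indices far in the radial direction; second, one must convert the successive–minima bounds, which rest essentially on the bad approximability of $\theta/2\pi$, into a genuinely uniform comparison of the two cells, so that the error terms above are $o(1)$ uniformly as $n\to\infty$. It is precisely the failure of the uniform $n^{-1/2}$ scaling that one expects when $\theta/2\pi$ is not badly approximable, which is why the hypothesis cannot be dropped.
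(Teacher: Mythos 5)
Your overall strategy coincides with the paper's: your lattice $\Lambda_n=\zahl\left(\frac{\alpha}{n}+\theta\rmi\right)+2\pi\rmi\,\zahl$ is exactly the paper's linearization $\Lambda_\mu$ of Section 5; your covolume remark is Lemma~\ref{20}; your use of bad approximability to make $\Lambda_n$ uniformly well-rounded is the paper's bound $v_\mu\le M_2=1+\half M_1$ from Lemma~\ref{68}; and your Taylor estimate $O(k^2/n^2)$ together with the final rescaling by $n^{2\alpha}$ are the paper's Lemma~\ref{70} and identity (\ref{13}). Working in logarithmic coordinates rather than with the normalized spiral $\Sigma_\mu$ and the map $\phi$ is only a cosmetic difference.

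The genuine gap is the step you describe as ``a standard stability estimate for Voronoi cells of well-rounded point sets,'' which you invoke twice --- once to compare $\{w_j\}$ with $\Lambda_n$, and once more to absorb the $O(\mathrm{diam}^2)$ nonlinearity of $\exp$ --- but never prove, and which cannot simply be cited in the quantitative, $n$-uniform form you need. That estimate is the technical heart of the paper: all of Section 4. The difficulty it addresses is that Voronoi combinatorics are not stable under perturbation: the cell vertices are circumcenters of triangles of sites, and adjacencies can change as sites move, so ``every site is within $\epsilon$ of a well-rounded lattice'' does not by itself bound the change in cell area. The paper must (a) control the motion of circumcenters under perturbation of the sites (Lemmas \ref{50}--\ref{53}); (b) show that for an $\epsilon$-perturbation with $\epsilon\le\frac{1}{45+10v}$ only the eight lattice neighbours $\pm1,\pm\ww,\pm(1+\ww),\pm(1-\ww)$ can produce adjacent cells, including the exclusion of far-away sites --- your ``wrap-around'' issue --- via the local-perturbation radii $3+3v$ and $4+4v$ (Lemmas \ref{54}, \ref{60}); (c) confine the perturbed cell to $U\left(0,\frac{3}{4}(1+v)\right)$ (Lemma \ref{57}); and (d) convert the displacement of each bounding half-plane into an area bound, $|V(0,\Lambda)\;\triangle\;V(0,\varphi(\Lambda))|\le 12\epsilon(1+v)(5+3v)$ (Lemma \ref{8}, Proposition \ref{66}), with all constants depending only on the partial-quotient bound $M_1$ --- which is exactly where bad approximability is consumed. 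Note also that your monotonicity remark for large $|k|$ only rules out spurious neighbours once one already knows the perturbed cell lies in a ball of radius $O(n^{-1/2})$, which is itself part of (b)--(c), not a given. You correctly assembled every input to this estimate, and you honestly flag it as ``the main obstacle,'' but as written the hardest part of the proof is asserted rather than proved.
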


\begin{corollary}[Area convergence]
\label{18_1}
  Suppose that $\theta/2\pi$ is badly approximable.  
  Then we have
  $\lim_{j\to\infty} |V(z_{j},\spiral)| = \infty$ if $\alpha > \half$,
  $\lim_{j\to\infty} |V(z_{j},\spiral)| = 0$ if $\alpha < \half$,
  and 
  \[
  \lim_{j\to\infty} |V(z_{j},\spiral)| = \pi
  \]
  if $\spiral$ is a Fermat spiral lattice $\alpha= \half$.
\end{corollary}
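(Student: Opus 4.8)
The plan is to read off all three cases directly from Theorem~\ref{18}, which we are free to assume. The only algebraic identity needed is the factorization
\[
|V(z_{j},\spiral)| = j^{2\alpha-1}\cdot\bigl(j^{1-2\alpha}|V(z_{j},\spiral)|\bigr),
\]
valid for every $j\ge 1$. Writing $w_{j} := j^{1-2\alpha}|V(z_{j},\spiral)|$, Theorem~\ref{18} gives $w_{j}\to 2\pi\alpha$ as $j\to\infty$, and since $\alpha>0$ this limit is a strictly positive finite constant. Thus the asymptotics of $|V(z_{j},\spiral)|$ are governed entirely by the deterministic factor $j^{2\alpha-1}$, whose behaviour depends only on the sign of the exponent $2\alpha-1$.

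I would then split into the three regimes. If $\alpha>\half$ then $2\alpha-1>0$, so $j^{2\alpha-1}\to\infty$; multiplying by $w_{j}$, which is eventually bounded below by a positive constant (say $\pi\alpha$ for large $j$, since $w_{j}\to 2\pi\alpha$), forces $|V(z_{j},\spiral)|\to\infty$. If $\alpha<\half$ then $2\alpha-1<0$, so $j^{2\alpha-1}\to 0$; multiplying by the convergent factor $w_{j}$ gives $|V(z_{j},\spiral)|\to 0$. Finally, if $\alpha=\half$ the exponent vanishes, $j^{2\alpha-1}=1$ for all $j\ge 1$, and therefore $|V(z_{j},\spiral)|=w_{j}\to 2\pi\cdot\half=\pi$, which is the Fermat case.

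There is essentially no obstacle at the level of the corollary: once Theorem~\ref{18} is in hand, the statement is a one-line consequence of elementary limit arithmetic applied to the trichotomy $\lim_{j} j^{2\alpha-1}\in\{0,1,\infty\}$. The only point deserving care is that the normalizing limit $2\pi\alpha$ in Theorem~\ref{18} is strictly positive; this is precisely what lets the $\alpha>\half$ case diverge rather than stall at a finite value, and what pins the Fermat limit to exactly $2\pi\cdot\half=\pi$ rather than leaving it merely finite. All of the genuine difficulty --- establishing the normalized limit $2\pi\alpha$ itself, where the badly approximable hypothesis enters to keep the local rescaled lattices in a compact family so that each Voronoi cell area matches the reciprocal of the local point density --- resides in the proof of Theorem~\ref{18} and is not revisited here.
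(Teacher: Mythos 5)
Your proposal is correct and is exactly the argument the paper intends: the corollary is stated as an immediate consequence of Theorem~\ref{18}, obtained by factoring $|V(z_{j},\spiral)| = j^{2\alpha-1}\cdot\bigl(j^{1-2\alpha}|V(z_{j},\spiral)|\bigr)$ and applying the trichotomy on the sign of $2\alpha-1$, with the Fermat case giving $2\pi\cdot\half=\pi$. Nothing further is needed.
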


Figure \ref{Figure_1_2} shows the sequences of areas $|V(z_j,S)|$ 
for $S=S(\alpha,2\pi\tau)$, $\tau=(1+\sqrt{5})/2$, $j=1,2,\dots,10000$,
where $\alpha =1, \half, \frac{1}{4}$.
Figure \ref{Figure_1_3} observes the convergence $j^{1-2\alpha}|V(z_{j},\spiral)| \to 2\pi\alpha$ for 
$\alpha = 1, \half, \frac{1}{4}$.
In particular, in the Fermat spiral lattice $\alpha= \half$, the area $|V(z_{j},\spiral)|$ tends to $\pi$.

The outline of this paper is as follows.  Section 2 is a short
introduction to continued fractions and Farey intervals
\cite{hardywright}.  Section 3 recalls the geometry of Voronoi
tessellations on linear lattices $\Lambda = \Lambda(z) = \zahl +
z\zahl$, \cite{hellwig,helical}.  Here we adopt the botanical term
`opposed parastichy pair' and introduce Richards' formula \cite{jean}.
The `rise in phyllotaxis' in botany, or the parastichy transition
\cite{rothen_koch_1989_a}, considers a geodesic flow on the modular
surface $SL(2,\zahl) \backslash \halfplane$, in the vertical direction
by fixing the `divergence' $\re(z)$ of the parameter $z \in
\halfplane$ and taking the limit as $\im(z) \to 0$.  It is a `scenery
flow' on the geometric structures on the torus \cite{arnoux-fisher}.

Section 4 is a technical part.  A basis $\lambda_1, \lambda_2$ of a
linear lattice $\Lambda = \lambda_1 \zahl + \lambda_2 \zahl$ is called
{\em reduced} if $\lambda_2 / \lambda_1 \in \fdom$, where
$\fdom = \set{\zeta \in \halfplane :
  |\zeta|\ge1, -\half \le \re(\zeta) \le \half}$.
It is shown that if $\Lambda$ is a linear lattice with a reduced basis
$\lambda_1, \lambda_2$, and $\phi : \Lambda \to \cx$ is a perturbation
with $|\phi-\id|$ small, then the Voronoi cell $V(\phi(0),
\phi(\Lambda))$ is adjacent to the cell $V(\phi(\lambda),
\phi(\Lambda))$, in the perturbed nonlinear lattice, only if either
$\lambda = \pm \lambda_1, \pm \lambda_2, \pm \lambda_1 \pm \lambda_2$.
This implies that the area $|V(\phi(0),\phi(\Lambda))|$ is close to
the area $|V(0,\Lambda)|$.  Here we assume that $\im(\lambda_2 /
\lambda_1)$ is bounded from above.

The adjacency of Voronoi cells depends on the bifurcation of the
vertices of Voronoi tessellations.  A vertex of Voronoi cell is a
circumcenter of a triangle of sites in $\Lambda$.  If we perturb a
corner of a triangle, then the circumcenter stays on the perpendicular
bisector of the edge for the other two corners of the triangle.
 
Section 5 is the main part of this paper.  Instead of treating the
discrete family of Voronoi cells, we consider a suspension flow in the
space of Archimedean spiral lattices.  Locally, it can be regarded as
a perturbation of a geodesic flow in the space of linear lattices.  By
Taylor's theorem, the area of the Voronoi cell in the nonlinear
lattice is close to the area of a Voronoi cell in the linear lattice,
which shows Theorem~\ref{18}.

See \cite{pennybacker,barabe} for the history of the study of
phyllotaxis.  The circle packing model of spiral phyllotaxis is
addressed in \cite{rothen_koch_1989_b,yamagishi_2017_a}.  It has been
shown only recently that a Fermat spiral lattice is a Delone set
(relatively dense and uniformly discrete) if and only if its
divergence angle is badly approximable in \cite{akiyama}.  See also
\cite{marklof}.

\begin{figure}
\centering
(a)
\includegraphics[scale=.6]{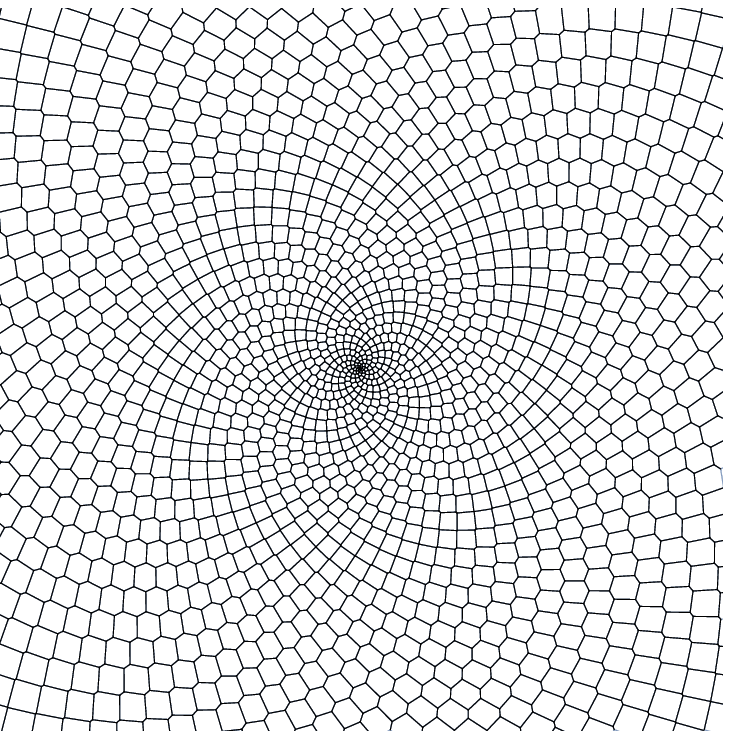}
(b)
\includegraphics[scale=.6]{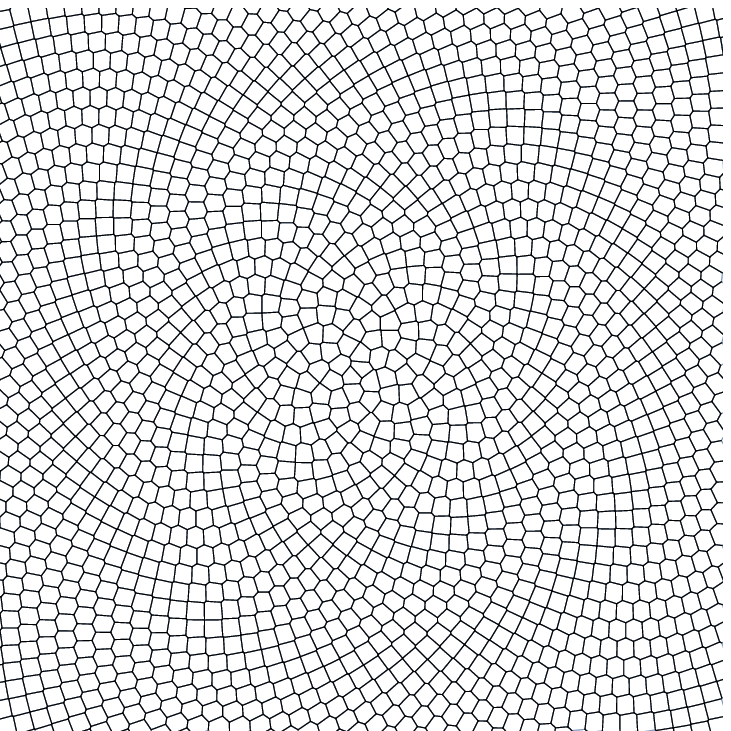}

(c)
\includegraphics[scale=.6]{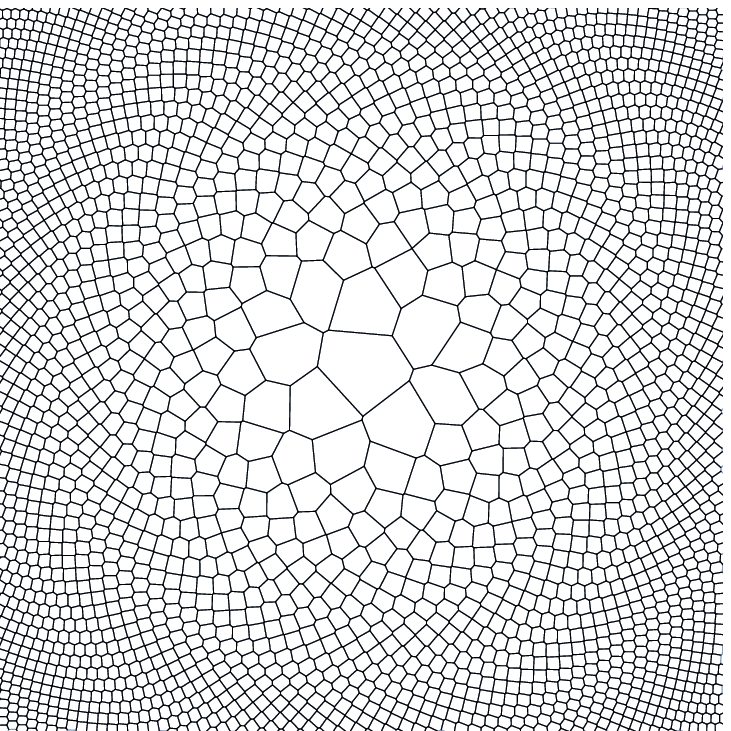}
\caption{Voronoi tessellations for spiral lattices $S = S(\alpha,\theta)$.
(a) The Archimedean spiral lattice with the exponent $\alpha=1$, divergence angle $\theta=2\pi\tau$, where $\tau = (1+\sqrt{5})/2$.  Plot range: $[-1296,1296] \times [-1296,1296]$.
(b) The Fermat spiral lattice $\alpha=1/2$, $\theta=2\pi\tau$.  Plot range: $[-36,36]\times[-36,36]$.
(c) $\alpha=1/4$, $\theta=2\pi\tau$.  Plot range: $[-6,6]\times[-6,6]$.
}
\label{Figure_1_1}
\end{figure}

\begin{figure}
\centering
(a)
\includegraphics[scale=.5]{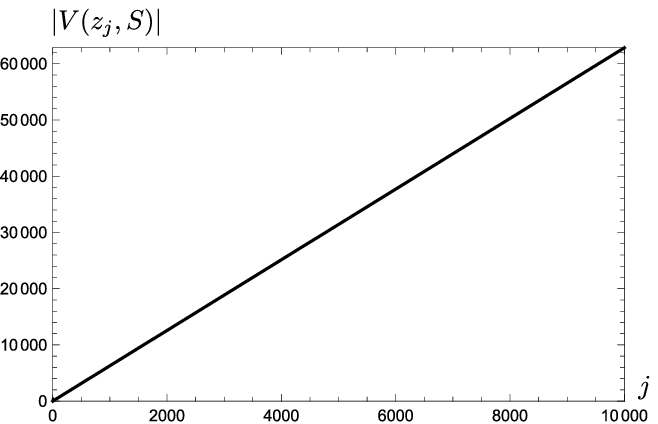}
(b)
\includegraphics[scale=.5]{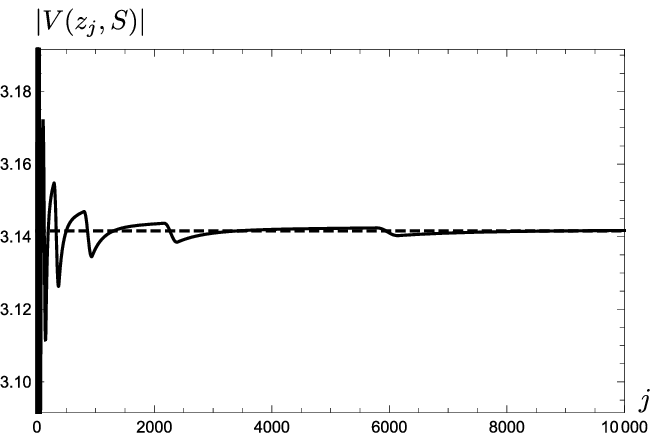}
(c)
\includegraphics[scale=.5]{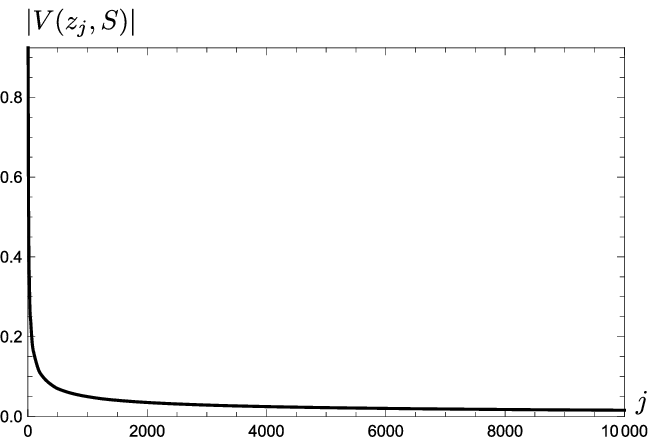}
\caption{
Areas of Voronoi cells $|V(z_j, S)|$, $j=1,2,\dots,10000$.
(a) The Archimedean spiral lattice $S = S(1,2\pi\tau)$, $\tau = (1+\sqrt{5})/2$.
(b) The Fermat spiral lattice $S(1/2, 2\pi\tau)$. 
(c) $S(1/4, 2\pi\tau)$. 
}
\label{Figure_1_2}
\end{figure}

\begin{figure}
\centering
(a)
\includegraphics[scale=.5]{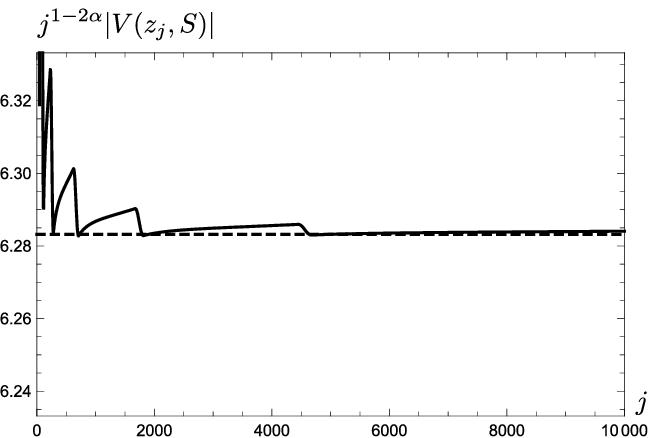}
(b)
\includegraphics[scale=.5]{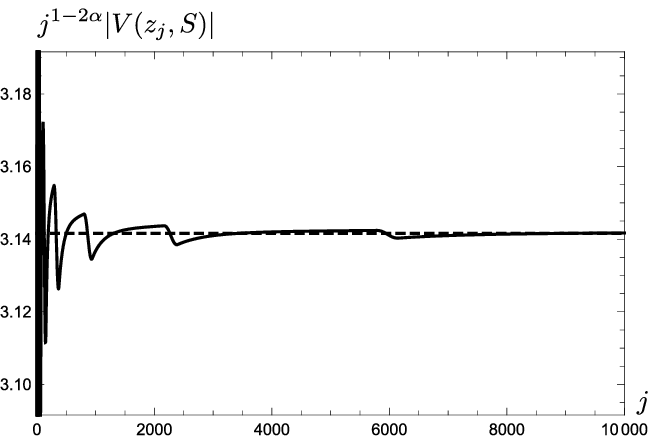}
(c)
\includegraphics[scale=.5]{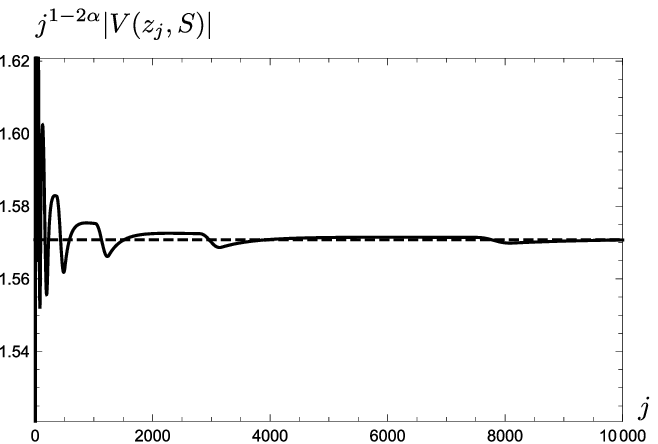}
\caption{
Areas of Voronoi cells, normalized, $j^{1-2\alpha}|V(z_j, S)|$, $1 \le j \le 10000$.
(a) The Archimedean spiral lattice $S(1, 2\pi\tau)$.
(b) The Fermat spiral lattice $S(1/2, 2\pi\tau)$. 
(c) $S(1/4, 2\pi\tau)$. 
}
\label{Figure_1_3}
\end{figure}

\section{Farey intervals}

Here we recall some basic properties of the Farey series and continued
fractions.  A fraction $\frac{p}{q} \in \rat$ always assumes that
$p\in\zahl$ and $q \in \zahl_{>0}$.  Let $k\in\zahl_{>0}$.  The set of
fractions with denominator $\le k$, $\calF_k = \set{\frac{p}{q} \in
  \rat : p,q \in \zahl, \ 1 \le q \le k}$, is called the \textit{Farey
  series} of order $k$.  We have $\calF_1 = \zahl \subset \calF_2
\subset \calF_3 \subset \dots$, and $\bigcup_{k} \calF_k = \rat$.  An
open interval $(\frac{a}{m}, \frac{b}{n})$ with irreducible fractions
$\frac{a}{m}, \frac{b}{n}$ is called a \textit{Farey interval} of
order $k \in \zahl_{>0}$ if it is a connected component of $\real
\setminus \calF_k$.  A pair of fractions $\frac{a}{m} < \frac{b}{n}$
is called a \textit{Farey pair} if $mb-na=1$.

\begin{lemma} 
\label{36}
A pair of irreducible fractions $\frac{a}{m} < \frac{b}{n}$ is a Farey
pair if and only if the interval $(\frac{a}{m}, \frac{b}{n})$ is a
Farey interval of some order $k \in \zahl_{>0}$.  If it is the case,
we have $\max\set{m,n} \le k < m+n$ and
\begin{equation}
\label{1}
\calF_{m+n} \cap
\left(\frac{a}{m}, \frac{b}{n} \right) = \Set{\frac{a+b}{m+n}}.
\end{equation}
\end{lemma}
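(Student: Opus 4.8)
The plan is to deduce both implications and all the quantitative claims from a single elementary estimate on the denominators of interior fractions, reserving an induction on the order $k$ for the harder (forward) direction. The key estimate concerns a Farey pair $\frac{a}{m} < \frac{b}{n}$, i.e.\ one with $mb-na=1$. First, the mediant $\frac{a+b}{m+n}$ satisfies $\frac{a+b}{m+n} - \frac{a}{m} = \frac{mb-na}{m(m+n)} = \frac{1}{m(m+n)} > 0$ and likewise $\frac{b}{n} - \frac{a+b}{m+n} = \frac{1}{n(m+n)} > 0$, so it lies strictly inside; moreover $m(a+b)-(m+n)a = mb-na = 1$ forces $\gcd(a+b,m+n)=1$, so it is irreducible. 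Next, for any fraction $\frac{p}{q}$ (not necessarily reduced, $q\ge1$) with $\frac{a}{m} < \frac{p}{q} < \frac{b}{n}$, the integers $pm-qa$ and $qb-pn$ are both positive, hence $\ge1$, and since $n(pm-qa)+m(qb-pn) = q(mb-na) = q$, I obtain $q\ge m+n$, with equality only when $pm-qa=qb-pn=1$, which forces $p=a+b$. This gives simultaneously that every interior fraction has denominator $\ge m+n$ and that $\calF_{m+n}\cap(\frac{a}{m},\frac{b}{n}) = \set{\frac{a+b}{m+n}}$, i.e.\ (\ref{1}).

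For the implication \emph{Farey pair $\Rightarrow$ Farey interval}, assume $mb-na=1$ and set $k=\max\set{m,n}$. Both endpoints lie in $\calF_k$, and by the key estimate every interior fraction has denominator $\ge m+n > \max\set{m,n} = k$, so no point of $\calF_k$ lies strictly between them; hence $\frac{a}{m},\frac{b}{n}$ are consecutive in $\calF_k$ and $(\frac{a}{m},\frac{b}{n})$ is a Farey interval of order $k$. For the bounds: whenever the interval is a Farey interval of an order $k$, its endpoints lie in $\calF_k$, giving $\max\set{m,n}\le k$, while the interior irreducible mediant shows $k < m+n$ (otherwise $\frac{a+b}{m+n}\in\calF_k$ would sit strictly inside). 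Equation (\ref{1}) is exactly the first conclusion of the key estimate.

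The main point is the converse, \emph{Farey interval $\Rightarrow$ Farey pair}, which I would prove by induction on $k$, establishing at once the full statement $(\ast_k)$: every Farey interval of order $k$ has the form $(\frac{a}{m},\frac{b}{n})$ with $mb-na=1$, $\max\set{m,n}\le k<m+n$, and satisfying (\ref{1}). The base case $k=1$ is immediate, since $\calF_1=\zahl$ and its gaps are $(\ell,\ell+1)$. For the step, the components of $\real\setminus\calF_{k+1}$ refine those of $\real\setminus\calF_k$, so each Farey interval of order $k+1$ lies inside a unique Farey interval $(\frac{a}{m},\frac{b}{n})$ of order $k$; by $(\ast_k)$ and the key estimate the only point of $\calF_{k+1}$ that can lie strictly inside is the mediant, and it does so precisely when $m+n=k+1$. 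Thus the interval either persists unchanged (when $m+n>k+1$) or splits at the mediant into the pairs $(\frac{a}{m},\frac{a+b}{m+n})$ and $(\frac{a+b}{m+n},\frac{b}{n})$; in each case a direct check such as $m(a+b)-(m+n)a = mb-na = 1$ verifies the Farey relation and the bounds, completing the induction.

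The heart of the argument is the key estimate, and the only place demanding care is the bookkeeping of the inductive step: one must confirm that every Farey interval of order $k+1$ genuinely arises from an order-$k$ interval in exactly one of the two described ways, and that the denominator bound $q\ge m+n$ is sharp and attained only at the mediant, since it is this sharpness that pins down both (\ref{1}) and the strict inequality $k<m+n$.
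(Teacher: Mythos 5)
Your proof is correct, and its second half takes a genuinely different route from the paper. Your key estimate is essentially the paper's forward argument in disguise: where you use the B\'ezout-type identity $n(pm-qa)+m(qb-pn)=q(mb-na)=q$ to force $q\ge m+n$ for interior fractions, the paper gets the same bound by estimating $q\frac{a}{m}-p\le-\frac{1}{m}$, $q\frac{b}{n}-p\ge\frac{1}{n}$ and dividing by the gap $\frac{b}{n}-\frac{a}{m}=\frac{1}{mn}$; your analysis of the equality case (equality only at the mediant) replaces the paper's sandwich $\frac{a+b-1}{m+n}\le\frac{a}{m}<\frac{a+b}{m+n}<\frac{b}{n}\le\frac{a+b+1}{m+n}$ as the proof of (\ref{1}), and is arguably cleaner since it pins down uniqueness directly. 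The real divergence is the converse (Farey interval $\Rightarrow$ Farey pair): the paper, given the left endpoint $\frac{a}{m}\in\calF_k$, solves the Diophantine equation $mb-na=1$ under the range condition $k-m<n\le k$, applies the forward argument to the constructed pair, and identifies the two intervals as the same connected component of $\real\setminus\calF_k$; you instead run an induction on the order $k$, showing that Farey intervals evolve only by mediant insertion (the Stern--Brocot/Farey dissection picture), with the pair relation, the bounds, and (\ref{1}) carried along as the inductive invariant. The paper's construction is shorter and produces the neighbor explicitly; your induction is more self-contained (no appeal to solvability of the congruence in a prescribed range, nor to the tacit component-uniqueness step the paper leaves implicit) and makes the refinement structure of $\calF_k\subset\calF_{k+1}$ transparent, at the cost of the inductive bookkeeping you correctly flag as the delicate point.
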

\begin{proof}
Let $\frac{a}{m} < \frac{b}{n}$ be a Farey pair, and suppose that
$\frac{a}{m} < \frac{p}{q} < \frac{b}{n}$.  Since $q\frac{a}{m}- p =
\frac{qa-pm}{m}<0$ and $q\frac{b}{n}- p = \frac{qb-pn}{n}>0$, we have
$q\frac{a}{m}- p \le -\frac{1}{m}$ and $q\frac{b}{n}- p \ge
\frac{1}{n}$.  Thus
\[
  q =
  \frac{(q\frac{b}{n} - p) - (q\frac{a}{m}- p)}{\frac{b}{n} - \frac{a}{m}}
  \ge \frac{\frac{1}{n} + \frac{1}{m}}{\frac{b}{n} - \frac{a}{m}}
  = m+n.
\]
Moreover we have
\[ \frac{a+b-1}{m+n}
  \le \frac{a}{m}
  < \frac{a+b}{m+n}
  < \frac{b}{n}
  \le \frac{a+b+1}{m+n},
\]
which implies (\ref{1}).

For the converse, let $\frac{a}{m} \in \calF_k$.  Let $b,n \in \zahl$
be a (unique) solution of the equation
\[ mb - na = 1
\]
under the condition $k - m < n \le k$.  Then we have $\max\set{m,n}
\le k < m+n$.  The argument above shows that $(\frac{a}{m},
\frac{b}{n})$ is a Farey interval of order $k$.  See also
\cite[Theorem 28]{hardywright}.
\end{proof}

Let
\begin{equation}
\label{25}
 x
  = a_0+\frac{1}{a_1+\frac{1}{a_2+\cdots}} = [a_0,a_1,a_2,\cdots],
  \ a_0 \in \Z, \ a_i \in \Z_{>0}, \ i\geq 1
\end{equation}
be a continued fraction expansion of $x \in \real$.  We often assume
that $x$ is irrational.  Define the sequences $\set{p_i}_{i\geq -1}$
and $\set{q_i}_{i\geq -1}$ by $p_{-1} = 1$, $q_{-1} = 0$, $p_{0}=
a_0$, $q_{0}=1$, $p_{1}=a_0 a_1 + 1$, $q_{1}=a_1$, and $p_{i+1} =
a_{i+1} p_{i} + p_{i-1}$, $q_{i+1} = a_{i+1} q_{i} + q_{i-1}$,
$i\ge1$.  Let $p_{i,k} = k p_{i} + p_{i-1}$, $q_{i,k} = k q_{i} +
q_{i-1}$ for $i \ge0$, $0 \le k \le a_{i+1}$.  Note that $p_{i,0} =
p_{i-1}$, $q_{i,0} = q_{i-1}$, $p_{i,a_{i+1}} = p_{i+1}$,
$q_{i,a_{i+1}} = q_{i+1}$.  The fraction ${p_i}/{q_i} =
[a_0,a_1,\cdots,a_i]$, $i\ge0$, is called a {\it (principal)
  convergent} of $x$, and ${p_{i,k}}/{q_{i,k}} = [a_0, a_1, \cdots,
  a_i, k]$, $i\ge0$, $0<k<a_{i+1}$, is called an {\it intermediate
  convergent} of $x$.  An induction shows that if $i$ is odd and $0
\le k \le a_{i+1}$, then $(\frac{p_{i,k}}{q_{i,k}}, \frac{p_i}{q_i})$
is a Farey interval and $\frac{p_{i,k}}{q_{i,k}} < x <
\frac{p_i}{q_i}$.  If $i$ is even, $0 \le k \le a_{i+1}$ and
$(i,k)\neq(0,0)$, then $(\frac{p_i}{q_i}, \frac{p_{i,k}}{q_{i,k}})$ is
a Farey interval and $\frac{p_i}{q_i} < x < \frac{p_{i,k}}{q_{i,k}}$.

\begin{lemma} 
\label{38}
Let $(\frac{a}{m}, \frac{b}{n})$ be a Farey interval containing $x$.
Then we have either $(\frac{a}{m}, \frac{b}{n}) =(\frac{p_i}{q_i},
\frac{p_{i,k}}{q_{i,k}})$ for some $i$ even and $0 \le k < a_{i+1}$,
or $(\frac{a}{m}, \frac{b}{n}) =(\frac{p_{i,k}}{q_{i,k}},
\frac{p_i}{q_i})$ for some $i$ odd and $0 \le k < a_{i+1}$.
\end{lemma}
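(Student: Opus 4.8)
The plan is to prove the converse to the discussion preceding the lemma: the continued-fraction intervals listed there are \emph{all} of the Farey intervals containing $x$. The engine is the observation that, since $x$ is irrational, $x \notin \calF_k$ for every $k$, so for each order $k$ there is a unique Farey interval $I_k$ of order $k$ with $x \in I_k$. As $k$ grows these intervals are nested, $I_1 \supseteq I_2 \supseteq \cdots$: by Lemma~\ref{36} a Farey interval $(\frac{a}{m}, \frac{b}{n})$ is a Farey interval of order $k$ exactly for $\max\set{m,n} \le k < m+n$, and by~(\ref{1}) the only fraction of $\calF_{m+n}$ inside it is the mediant $\frac{a+b}{m+n}$. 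Hence $I_k = I_{k+1}$ unless $k+1 = m+n$, in which case $I_{k+1}$ is whichever of the two halves $(\frac{a}{m}, \frac{a+b}{m+n})$, $(\frac{a+b}{m+n}, \frac{b}{n})$ contains $x$. Consequently the distinct members of $\set{I_k}_{k\ge1}$ form a strictly decreasing chain $J_0 \supsetneq J_1 \supsetneq \cdots$ obtained by iterated mediant refinement from $J_0 = I_1 = (a_0, a_0+1)$, and every Farey interval containing $x$ equals some $I_k$, hence some $J_t$.

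It then remains to identify this chain with the continued-fraction list. First I would pin down the base: $J_0 = (a_0, a_0+1)$ is the interval indexed by $(i,k)=(0,1)$, or by $(i,k)=(1,0)$ when $a_1 = 1$. Next, using the recurrences $p_{i,k+1} = p_{i,k} + p_i$ and $q_{i,k+1} = q_{i,k} + q_i$, I would compute that for even $i$ the mediant of $(\frac{p_i}{q_i}, \frac{p_{i,k}}{q_{i,k}})$ is exactly $\frac{p_{i,k+1}}{q_{i,k+1}}$, and that $x$ lies in the left half $(\frac{p_i}{q_i}, \frac{p_{i,k+1}}{q_{i,k+1}})$ since $\frac{p_i}{q_i} < x < \frac{p_{i,k}}{q_{i,k}}$; the odd case is symmetric. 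Thus advancing $k$ to $k+1$ is a single mediant refinement of the chain. At $k = a_{i+1}$ the intermediate convergent becomes the principal convergent $\frac{p_{i+1}}{q_{i+1}}$ and the interval coincides with the $(i+1,0)$ interval of the opposite parity --- which is exactly why the listing is restricted to $0 \le k < a_{i+1}$ --- so the roll-over neither repeats nor omits an interval.

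Since the continued-fraction intervals are Farey intervals containing $x$ (established before the lemma), begin at $J_0$, and each advances the chain by precisely one mediant refinement, they exhaust the deterministic refinement chain $\set{J_t}$. As every Farey interval containing $x$ lies on this chain, it must appear in the list, which gives the stated dichotomy: the first alternative arises for even $i$ and the second for odd $i$.

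I expect the main obstacle to be the bookkeeping rather than any substantial new idea: one must check carefully that consecutive listed intervals differ by \emph{exactly} one mediant step and that the parity roll-over at $k = a_{i+1}$ neither duplicates nor skips an interval, so that the list is genuinely the entire chain and no Farey interval bracketing $x$ escapes the classification.
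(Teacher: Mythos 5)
Your argument is correct, and it reaches the lemma by a genuinely different organization than the paper's. The paper's proof is a direct, one-step identification: given the Farey interval $(\frac{a}{m}, \frac{b}{n}) \ni x$, it is the connected component of $\real\setminus\calF_K$ containing $x$ for $K=\max\set{m,n}$; the paper locates $K$ in the increasing sequence of denominators (either $q_i \le K < q_{i,1}$, or $q_{i,k}\le K < q_{i,k+1}$ with $1 \le k < a_{i+1}$), notes that the continued-fraction interval attached to that bucket is itself a Farey interval containing $x$ (the pre-lemma induction) whose admissible orders, by Lemma~\ref{36}, are exactly that bucket, and concludes by uniqueness of the component containing $x$. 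You instead induct over the order: you build the nested chain of components $I_1 \supseteq I_2 \supseteq \cdots$, show that each strict refinement is a mediant subdivision, and identify the chain with the continued-fraction list via a base case and a mediant computation from the recurrences, with the parity roll-over at $k=a_{i+1}$. Both proofs stand on the same two pillars --- the pre-lemma induction and Lemma~\ref{36}, the latter in the slightly stronger reading (implicit in its proof, and needed by both arguments) that a Farey-pair interval is a Farey interval of \emph{every} order $k$ with $\max\set{m,n}\le k< m+n$. What the paper's version buys is brevity: the index $(i,k)$ is read off from $\max\set{m,n}$ with no induction. What yours buys is a self-contained, Stern--Brocot-style explanation of why the list is exhaustive; you essentially re-derive the bucketing that the paper reads off directly, at the cost of the bookkeeping you anticipate.

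One step needs tightening: for even $i$ you justify that ``$x$ lies in the left half $(\frac{p_i}{q_i}, \frac{p_{i,k+1}}{q_{i,k+1}})$'' by ``since $\frac{p_i}{q_i} < x < \frac{p_{i,k}}{q_{i,k}}$.'' That containment alone does not determine on which side of the mediant $x$ falls. The correct justification is the pre-lemma fact applied to the index $(i,k+1)$, namely $x < \frac{p_{i,k+1}}{q_{i,k+1}}$ for even $i$ (equivalently $q_{i,k+1}x - p_{i,k+1}<0$), which is available to you since you invoke those same facts to seed the induction; just cite them at this step as well. With that citation inserted, the proof is complete.
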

\begin{proof}
The interval $(\frac{a}{m}, \frac{b}{n})$ is a connected component of
$\real\setminus\calF_{\max\set{m,n}}$.  Since $\set{q_i}_i$ is an
increasing sequence, there exists $i$ such that $q_i \le \max\set{m,n}
< q_{i+1}$.  If $q_i \le \max\set{m,n} < q_{i,1}$, then the connected
component of $\real\setminus\calF_{\max\set{m,n}}$ containing $x$ is
written as $(\frac{p_{i-1}}{q_{i-1}}, \frac{p_i}{q_i})$ or
$(\frac{p_{i}}{q_{i}}, \frac{p_{i-1}}{q_{i-1}})$.  Otherwise, there
exists $1\le k< a_{i+1}$ such that
$q_{i,k} \le \max\set{m,n} < q_{i,k+1}$,
so the connected component of
$\real\setminus\calF_{\max\set{m,n}}$ containing $x$ is written as
$(\frac{p_{i,k}}{q_{i,k}}, \frac{p_i}{q_i})$ or $(\frac{p_{i}}{q_{i}},
\frac{p_{i,k}}{q_{i,k}})$.
\end{proof}

A fraction $\frac{a}{m}$ is called an approximation to $x\in\real$
on the left, if $\frac{a}{m} < x$ and $0 < mx-a \le qx-p$ whenever
$\frac{p}{q} \le x$ and $0<q\le m$.  A fraction $\frac{b}{n}$
is called an approximation to $x$ on the right, if $\frac{b}{n} > x$
and $0 < b - nx \le p - qx$ whenever $\frac{p}{q} \ge x$ and
$0<q\le n$.  It is clear that a fraction is irreducible
if it is an approximation to some $x$ (on the left or right).

\begin{lemma} 
  \label{39}
  Let $x \in \real \setminus \zahl$.  A fraction is an
  approximation to $x$ on the left or right, if and only if it is
  an endpoint of a Farey interval containing $x$.
\end{lemma}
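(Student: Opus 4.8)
The plan is to prove the two one-sided statements separately: a fraction $\frac am<x$ is an approximation to $x$ on the left if and only if it is the left endpoint of a Farey interval containing $x$, and symmetrically on the right. Since a left approximation and a left endpoint both lie strictly below $x$, while a right approximation and a right endpoint both lie strictly above $x$, these two biconditionals combine to give the lemma. I will only treat the left case; the right case follows by reflecting $x\mapsto -x$, which preserves $\calF_k$ and interchanges the two sides.

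For the backward direction, suppose $(\frac am,\frac bn)$ is a Farey interval of order $k$ containing $x$, so by Lemma~\ref{36} we have $mb-na=1$, $\max\set{m,n}\le k$, and $\frac am<x<\frac bn$. Given any competitor $\frac pq\le x$ with $0<q\le m$, I first note that $q\le m\le k$ forces $\frac pq\in\calF_k$, so $\frac pq$ cannot lie in the open interval $(\frac am,\frac bn)$; together with $\frac pq\le x<\frac bn$ this gives $\frac pq\le\frac am$. It then remains to show the weighted-error inequality $mx-a\le qx-p$. When $m=q$ this is immediate from $p\le a$. When $m>q$, the key computation is to examine the mediant-type fraction $\frac{a-p}{m-q}$: using $mb-na=1$ and $qb-pn\ge1$ (which holds because $\frac pq<\frac bn$) one finds $(a-p)n-(m-q)b=-1+(qb-pn)\ge0$, so $\frac{a-p}{m-q}\ge\frac bn>x$, and rearranging $a-p>(m-q)x$ yields $qx-p>mx-a$. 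Since $\frac am<x$ gives $mx-a>0$, this shows $\frac am$ is an approximation on the left.

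For the forward direction, suppose $\frac am$ is an approximation to $x$ on the left. The natural candidate interval is the connected component of $\real\setminus\calF_m$ containing $x$, and the first thing to check is that this component exists, i.e.\ $x\notin\calF_m$: indeed, if $x=\frac PQ$ with $Q\le m$, then testing the defining inequality against $\frac pq=\frac PQ=x$ (legal since $q=Q\le m$ and $\frac PQ\le x$) would force $mx-a\le Qx-P=0$, contradicting $mx-a>0$. Writing that component as the Farey interval $(\frac cs,\frac dt)$ via Lemma~\ref{36}, its endpoints lie in $\calF_m$ so $s\le m$, and $\frac am\in\calF_m$ with $\frac am<x$ forces $\frac am\le\frac cs$. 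I then show equality by contradiction: if $\frac am<\frac cs<x$ with $s\le m$, applying the left-approximation property of $\frac am$ to $\frac cs$ gives $mx-a\le sx-c$; but $m\ge s>0$ and $0<x-\frac cs<x-\frac am$ give $mx-a=m(x-\tfrac am)\ge s(x-\tfrac am)>s(x-\tfrac cs)=sx-c$, a contradiction. Hence $\frac am=\frac cs$ is the left endpoint of a Farey interval containing $x$.

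The main obstacle is the backward-direction estimate $mx-a\le qx-p$: one must rule out every denominator-$\le m$ fraction as a strictly better one-sided approximation, and the efficient way to do this is the mediant comparison $\frac{a-p}{m-q}\ge\frac bn$, whose proof rests on the unimodularity $mb-na=1$ of the Farey pair together with the integrality bound $qb-pn\ge1$. The forward direction, by contrast, is a short minimality argument, the only mild care being the verification that $x\notin\calF_m$ so that the relevant component of $\real\setminus\calF_m$ actually exists; this is exactly where the hypothesis $x\notin\zahl$ (and, for rational $x=\frac PQ$, the resulting bound $m<Q$ on denominators of approximations) enters.
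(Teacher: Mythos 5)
Your proof is correct and follows essentially the same route as the paper's: the forward direction identifies $\frac{a}{m}$ as the left endpoint of the connected component of $\real\setminus\calF_m$ containing $x$, and the backward direction splits into the trivial case $q=m$ and the case $q<m$ handled by the mediant comparison $\frac{a-p}{m-q}\ge\frac{b}{n}$ via unimodularity $mb-na=1$ and the integrality bound $qb-pn\ge1$. The only differences are that you make explicit two points the paper leaves implicit — that the left-approximation property forces $x\notin\calF_m$ (so the relevant component exists), and that competitors $\frac{p}{q}\le x$ with $q\le m$ must satisfy $\frac{p}{q}\le\frac{a}{m}$ — which is careful bookkeeping, not a different argument.
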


\begin{proof}
  First, we suppose that $\frac{a}{m} < x$ is an approximation to
  $x$ on the left, and show that it is an endpoint of a Farey interval
  containing $x$.  For any fraction $\frac{p}{q} < x$ with $0<p\le
  m$, we have $0 < mx-a \le qx-p$.  This implies that $x -
  \frac{a}{m} \le x - \frac{p}{q}$, and $\frac{p}{q} \le
  \frac{a}{m}$.  So the connected component of $\real \setminus
  \calF_{m}$ containing $x$ has an endpoint $\frac{a}{m}$.  The proof
  for the case $\frac{b}{n} > x$ is similar.

  Conversely, assume that $(\frac{a}{m}, \frac{b}{n})$ is a Farey
  interval containing $x$, and we will show that $\frac{a}{m}$ is an
  approximation to $x$ on the left.  Let $\frac{p}{q}$ be a fraction
  such that $\frac{p}{q} < \frac{a}{m}$, and suppose that $0 < q\le
  m$.  If $q=m$, then we have $p<a$ and $mx-a < qx-p$.  If $q<m$, then
  we have
  \[ \frac{a-p}{m-q} - \frac{b}{n}
   = \frac{qb-np-1}{(m-q)n} \ge 0,
  \]
  which implies that $\frac{a-p}{m-q} \ge \frac{b}{n} > x$ and $mx-a <
  qx-p$.  So $\frac{a}{m}$ is an approximation to $x$ on the left.  We
  can similarly show that $\frac{b}{n}$ is an approximation to $x$ on
  the right.
\end{proof}

\section{Voronoi tessellation for a linear lattice}

Here we introduce some basic properties of Voronoi tessellations for
linear lattices, including Richards' formula, by using the botanical
term `parastichy.'  Let $\Lambda = \Lambda(z) := z\zahl + \zahl$,
where $z = x + \rmi y \in \cx$, $x>0$.  The Voronoi cell of the site
$\lambda \in \Lambda$ is given by
\[
V(\lambda) = V(\lambda, \Lambda)
= \set{\zeta \in \cx:  |\zeta - \lambda| \le |\zeta-\lambda'|,
 \forall \lambda' \in \Lambda}.
\]
The family of Voronoi cells $\calV = \set{V(\lambda)}_{\lambda \in
  \Lambda(z)}$ is a tiling of the plane, that is, $\cx =
\bigcup_{\lambda \in \Lambda} V(\lambda)$, and $\interior(V(\lambda))
\cap \interior(V(\lambda')) = \emptyset$ for $\lambda \neq \lambda'$.

The line segment $\ell(\lambda, \lambda')$ with the endpoints
$\lambda, \lambda' \in \Lambda$, is called a Delaunay edge if it
satisfies the Empty Circumdisk Property, that is, there exists a disk
$K$ such that $K \cap \Lambda = \set{\lambda, \lambda'}$.  For
distinct points $w_1, w_2, w_3 \in \cx$, let $\angle(w_1, w_2, w_3) :=
\Arg(\frac{w_1-w_2}{w_3-w_2})$, where $-\pi < \Arg \le \pi$ denotes
the principal argument.

\begin{lemma}
\label{33}
  Let $\lambda, \lambda' \in \Lambda$ be distinct sites.  The
  following conditions are mutually equivalent.
  \begin{enumerate}
  \item The Voronoi cells $V(\lambda)$, $V(\lambda')$ are
    (edge-)adjacent.
  \item The line segment $\ell(\lambda, \lambda')$ is a Delaunay edge.
  \item $\ell(\lambda, \lambda') \cap \Lambda = \set{\lambda,
    \lambda'}$, and we have
    \[ \angle(\lambda, w, \lambda')
    + \angle(\lambda', w', \lambda) < \pi
    \]
    for any $w, w' \in \Lambda$ whenever $0< \angle(\lambda, w,
    \lambda') <\pi$ and $0< \angle(\lambda', w', \lambda) < \pi$.
  \end{enumerate}
\end{lemma}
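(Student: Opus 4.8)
The plan is to prove the cycle $(1)\Leftrightarrow(2)$ by Voronoi--Delaunay duality, and then $(2)\Leftrightarrow(3)$ by analysing the monotonicity of the inscribed angle subtended by the chord $\ell(\lambda,\lambda')$ from circles through $\lambda$ and $\lambda'$.

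For $(1)\Leftrightarrow(2)$ I would use the standard duality. If $V(\lambda)$ and $V(\lambda')$ share an edge of positive length, I pick a point $\zeta$ in the relative interior of that edge; then $\zeta$ lies on the perpendicular bisector of $\lambda,\lambda'$ and is strictly closer to $\{\lambda,\lambda'\}$ than to every other site, so the closed disk $K$ centred at $\zeta$ of radius $|\zeta-\lambda|$ satisfies $K\cap\Lambda=\{\lambda,\lambda'\}$ (in particular no site lies on the open segment), which is exactly the Empty Circumdisk Property. Conversely, given such a disk, its centre lies on the bisector, and by strictness together with the discreteness of $\Lambda$ a whole subsegment of the bisector near the centre still consists of points strictly closest to $\{\lambda,\lambda'\}$; this subsegment is a common edge of $V(\lambda)$ and $V(\lambda')$, so the cells are edge-adjacent.

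The heart of the argument is $(2)\Leftrightarrow(3)$. Since any Delaunay disk can be shrunk to one passing through both $\lambda$ and $\lambda'$ while still meeting $\Lambda$ only in $\{\lambda,\lambda'\}$, it suffices to consider circles through $\lambda,\lambda'$, which I parametrise by the signed height $t$ of their centre along the perpendicular bisector; let $\psi(t)\in(0,\pi)$ be the inscribed angle subtended by the chord from the arc on the side where $\angle(\lambda,\cdot,\lambda')$ is positive. The inscribed angle theorem gives that $\psi$ is a continuous strictly monotone bijection onto $(0,\pi)$, and that a site $w$ with $0<\angle(\lambda,w,\lambda')<\pi$ lies strictly outside the closed disk of parameter $t$ precisely when $\angle(\lambda,w,\lambda')<\psi(t)$, while a site $w'$ with $0<\angle(\lambda',w',\lambda)<\pi$ lies strictly outside precisely when $\angle(\lambda',w',\lambda)<\pi-\psi(t)$. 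Writing $\beta=\sup_w\angle(\lambda,w,\lambda')$ and $\beta'=\sup_{w'}\angle(\lambda',w',\lambda)$ --- suprema that are attained, since only finitely many sites subtend more than any fixed positive angle to a fixed chord --- an empty closed circumdisk with $\lambda,\lambda'$ on its boundary exists if and only if there is a $t$ with $\beta<\psi(t)<\pi-\beta'$, that is, if and only if $\beta+\beta'<\pi$. Combined with the requirement $\ell(\lambda,\lambda')\cap\Lambda=\{\lambda,\lambda'\}$ ruling out sites on the open segment, this is exactly condition (3).

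I expect the main obstacle to be the bookkeeping that distinguishes edge-adjacency from mere vertex-adjacency: the strict inequality $\beta+\beta'<\pi$ in (3) is precisely what guarantees a nondegenerate interval of admissible centres $t$, hence a common Voronoi edge of positive length rather than a single shared vertex. Care is also needed to confirm that collinear sites outside the segment, for which $\angle\in\{0,\pi\}$, neither violate the disk condition nor fall under the hypotheses of (3), and that the extremal sites realising $\beta$ and $\beta'$ lie on opposite sides of the line through $\lambda$ and $\lambda'$, so that the two supremum conditions can be met simultaneously by a single choice of $t$.
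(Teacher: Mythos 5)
Your argument is correct, but note that the paper itself gives no proof of this lemma: its entire ``proof'' is a citation to Lemma~1 of \cite{yamagishi_2015_a}. Your proposal therefore supplies a self-contained argument where the paper defers to a reference. The route you take --- Voronoi--Delaunay duality for $(1)\Leftrightarrow(2)$, then the monotone inscribed-angle parametrization $\psi(t)$ of circles through $\lambda,\lambda'$ for $(2)\Leftrightarrow(3)$ --- is the natural one, since condition (3) is exactly the inscribed-angle characterization of an empty circumdisk; what the citation buys the paper is brevity (and the generality of the cited lemma, which is not restricted to linear lattices), while your version buys the reader a complete elementary proof whose only structural input is the discreteness of $\Lambda$. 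Three details deserve tightening, though none is a genuine gap. First, in $(2)\Rightarrow(1)$ the centre of a disk $K$ with $K\cap\Lambda=\set{\lambda,\lambda'}$ need not lie on the perpendicular bisector; the shrinking step you invoke only later (replacing $K$ by a disk contained in it with $\lambda,\lambda'$ on its boundary) must be performed first. Second, collinear sites outside the closed segment have $\angle=0$, not $\pi$; the value $\pi$ occurs exactly for sites interior to the segment, which is what the hypothesis $\ell(\lambda,\lambda')\cap\Lambda=\set{\lambda,\lambda'}$ excludes, while the $\angle=0$ sites lie outside every disk through $\lambda,\lambda'$ because a circle meets the line of the chord only at those two points. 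Third, the attainment of the suprema $\beta,\beta'$ (which uses discreteness together with the fact that a rank-two lattice has sites strictly on both sides of the chord line, so neither supremum is over an empty set) is rightly emphasized: it is what converts the pointwise strict inequalities of (3) into $\beta+\beta'<\pi$, hence into a nondegenerate interval of admissible centres and a shared edge of positive length rather than a mere common vertex.
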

\begin{proof}
  See \cite[Lemma 1]{yamagishi_2015_a}.
\end{proof}

In this paper a line segment $\ell(\lambda, \lambda')$ is not called a
Delaunay edge when $V(\lambda)$ is only corner-adjacent to
$V(\lambda')$.  The Delaunay edges do not intersect each other.  A
connected component of the complement of the union of the Delaunay
edges is called a Delaunay polygon.  A Delaunay polygon for the linear
lattice $\Lambda(z)$ is either a rectangle or an acute triangle.

An irreducible fraction $\frac{a}{m}$, $a\in\zahl$, $m\in\zahl_{>0}$,
is called a parastichy index of $\Lambda(z)$ if $V(mz-a)$ is
(edge-)adjacent to $V(0)$.  A pair of parastichy indices $\frac{a}{m},
\frac{b}{n}$ of $\Lambda(z)$ is called an opposed parastichy pair if
$(mx-a)(nx-b)<0$.

\begin{lemma}
  Let $\frac{a}{m}$ be a parastichy index of $\Lambda(z)$.  Then
  $\frac{a}{m}$ is an approximation to $x = \re(z)$ on the left or
  right, or equal to $x$.
\end{lemma}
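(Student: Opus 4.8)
The plan is to translate the parastichy condition into a Delaunay-edge condition via Lemma~\ref{33}, and then to obstruct that edge geometrically whenever the approximation property fails. Write $\lambda = mz-a\in\Lambda$, so that $\re(\lambda)=mx-a$ and $\im(\lambda)=my$, where $y:=\im(z)$ satisfies $y^2>0$. By Lemma~\ref{33}, $\frac am$ is a parastichy index exactly when $\ell(0,\lambda)$ is a Delaunay edge, i.e. when $V(0)$ and $V(\lambda)$ are adjacent. If $mx-a=0$ then $\frac am=x$ and we are in the listed exceptional case, so assume $mx-a\neq0$. By symmetry it suffices to treat $mx-a>0$, i.e. $\frac am<x$ (the case $mx-a<0$ gives an approximation on the right by the analogous argument), and to prove that $\frac am$ is then an approximation on the left. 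Throughout I assume $x$ irrational, as in the setting of Theorem~\ref{18}; the rational case only contributes the `equal to $x$' alternative.

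I argue by contradiction. If $\frac am$ is not an approximation on the left, then, since $mx-a>0$ already holds, the defining inequality must fail for some witness: there is a fraction $\frac pq$ with $0<q\le m$, $\frac pq\le x$, and $qx-p<mx-a$; as $x$ is irrational this forces $0<qx-p<mx-a$. Put $\mu=qz-p\in\Lambda$, a lattice point distinct from $0$ and $\lambda$. The key claim is that $\mu$ lies strictly inside the disk with diameter $[0,\lambda]$, equivalently $\angle(0,\mu,\lambda)>\frac\pi2$, equivalently $|\mu|^2<\re(\bar\mu\lambda)$. A short computation gives
\[
|\mu|^2-\re(\bar\mu\lambda) = (qx-p)\bigl[(qx-p)-(mx-a)\bigr] + q(q-m)\,y^2 .
\]
The first summand is a product of a positive and a negative factor, hence strictly negative, while $q(q-m)y^2\le0$ because $0<q\le m$; so the right-hand side is negative, proving the claim. (In the symmetric case $mx-a<0$ one instead has $mx-a<qx-p<0$, and the same identity is again negative.)

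Finally I convert interior containment into non-adjacency. The disk with diameter $[0,\lambda]$ is centred at $\lambda/2$, so the point reflection $w\mapsto\lambda-w$ preserves it and sends $\mu$ to $\lambda-\mu$; hence $\lambda-\mu\in\Lambda$ also lies strictly inside this disk and on the opposite side of the line through $0$ and $\lambda$. Since this reflection is an isometry carrying $\angle(0,\mu,\lambda)$ to $\angle(\lambda,\lambda-\mu,0)$, both angles exceed $\frac\pi2$, so their sum exceeds $\pi$ and condition (3) of Lemma~\ref{33} fails. Thus $V(0)$ and $V(\lambda)$ are not adjacent, contradicting that $\frac am$ is a parastichy index. (If $\mu$ instead lay on the line through $0,\lambda$, it would lie on the open segment, directly violating $\ell(0,\lambda)\cap\Lambda=\set{0,\lambda}$, giving the same contradiction.) I expect the main obstacle to be precisely this last conversion: one lattice point inside the diameter disk does not by itself block a Delaunay edge, because larger circumscribing circles through $0,\lambda$ remain available, and one must produce the partner $\lambda-\mu$ on the opposite side — which is exactly the central-symmetry observation. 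The remaining bookkeeping, namely the symmetric case $\frac am>x$ and the rational `equal to $x$' case, is routine.
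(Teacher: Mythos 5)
Your proof is correct and follows essentially the same route as the paper's: both negate the approximation property to obtain a witness $\frac{p}{q}$ with $0<qx-p<mx-a$, and both then exhibit the pair of lattice points $qz-p$ and $(m-q)z-(a-p)=\lambda-\mu$ making angles exceeding $\frac{\pi}{2}$ over the segment $\ell(0,\lambda)$, so that condition (3) of Lemma~\ref{33} fails. Your explicit disk computation and the central-symmetry observation merely spell out what the paper's terser proof leaves implicit.
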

\begin{proof}
  First suppose that $mx-a>0$.  If $\frac{a}{m}$ is not an
  approximation to $x$ on the left, then there exist $p,q\in \zahl$
  such that $0< q \le m$, and $0< qx-p < mx-a$.  Then we have
  $\frac{\pi}{2}<|\angle(0, qz-p, mz-a)|,
  |\angle(mz-a,(m-q)z-(a-p),0)|$.  By Lemma~\ref{33}, $V(mz-a)$ is not
  adjacent to $V(0)$.  The case $mx-a<0$ is similar.
\end{proof}

\begin{lemma} 
  \label{12}
  Suppose that the cell $V(0)$ is adjacent to $V(mz-a)$ and $V(nz-b)$.
  Then $|mb-na| = 0, 1$.
\end{lemma}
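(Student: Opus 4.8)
The plan is to read $|mb-na|$ as a sublattice index and then to exclude the values $\ge 2$ geometrically. Write $\mu = mz-a$ and $\nu = nz-b$, and set $\Lambda' = \mu\Z + \nu\Z \subseteq \Lambda$. In the basis $z,1$ we have $\mu = m\cdot z + (-a)\cdot 1$ and $\nu = n\cdot z + (-b)\cdot 1$, so the change-of-basis determinant is $-(mb-na)$ and $[\Lambda:\Lambda'] = |mb-na|$. If $\mu = \nu$ then $m=n$, $a=b$ and $|mb-na| = 0$; since $m,n>0$ the possibility $\mu = -\nu$ does not arise. Hence I may assume $\mu \neq \nu$, and it suffices to prove $[\Lambda:\Lambda'] = 1$. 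Suppose, for contradiction, that $|mb-na| = [\Lambda:\Lambda'] \ge 2$.

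First I would extract a lattice point inside the closed triangle $T$ with vertices $0,\mu,\nu$, other than the three vertices. The area of $T$ equals $\tfrac12|mb-na|$ times the covolume of $\Lambda$, hence is at least the covolume; by Pick's theorem relative to $\Lambda$ (or by counting the points of $\Lambda$ in a fundamental parallelogram of $\Lambda'$), $T$ must contain a point $w \in \Lambda \setminus \{0,\mu,\nu\}$. If $w$ lies in the relative interior of the edge $[0,\mu]$ or of $[0,\nu]$, then $\ell(0,\mu)\cap\Lambda \supsetneq \{0,\mu\}$ or $\ell(0,\nu)\cap\Lambda \supsetneq \{0,\nu\}$, which already contradicts the fact that these are Delaunay edges (Lemma~\ref{33}). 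So I am left with the case that $w$ lies in the interior of $T$ or in the relative interior of the far edge $[\mu,\nu]$.

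It remains to contradict the adjacency of $V(0)$ with one of $V(\mu)$, $V(\nu)$ from such a $w$. The point $w$ lies on the same side of the line through $0,\mu$ as $\nu$, so the lattice point $w-\nu$ lies on the opposite side. The claim is that, after possibly interchanging the roles of $\mu$ and $\nu$ (testing $[0,\nu]$ with $w-\mu$ instead), one has $\angle(0,w,\mu) + \angle(\mu,w-\nu,0) \ge \pi$, with both angles in $(0,\pi)$; equivalently, $w-\nu$ lies in the closed disk bounded by the circle through $0,\mu,w$. This is precisely the failure of condition (3) of Lemma~\ref{33} for the pair $0,\mu$, so $\ell(0,\mu)$ is not a Delaunay edge and $V(0)$ is not adjacent to $V(\mu)$ — a contradiction. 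Therefore $[\Lambda:\Lambda'] \le 1$, i.e.\ $|mb-na| \in \{0,1\}$.

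The main obstacle is the angle inequality of the last paragraph: one must check that for every admissible position of $w$ in $T$ at least one choice of edge and companion point realizes an inscribed-angle sum $\ge \pi$. I would verify this by drawing the circumcircle of $0,\mu,w$ and locating $w-\nu$ relative to it, using that an interior (or far-edge) point of $T$ subtends the base $[0,\mu]$ at an angle strictly larger than the apex $\nu$ does, so the companion on the opposite side is forced inside the circle. Pinning down that the correct edge always works in every position of $w$ is the delicate point, and it is here that the hypothesis that \emph{both} $[0,\mu]$ and $[0,\nu]$ are Delaunay edges is genuinely used.
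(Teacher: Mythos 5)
Your skeleton is sound up to its last step: reading $|mb-na|$ as the index $[\Lambda:\Lambda']$ (when it is nonzero), producing an extra lattice point $w$ in the closed triangle $T=\triangle(0,\mu,\nu)$ via Pick's theorem, and disposing of the case $w\in\ell(0,\mu)\cup\ell(0,\nu)$ are all correct. The genuine gap is exactly the step you flag yourself and never close: the claim that for every $w$ in the interior of $T$ or on the open edge $(\mu,\nu)$, at least one of $w-\nu$, $w-\mu$ lies in the corresponding closed circumdisk. This is asserted, not proved, and the heuristic you offer for it (``$w$ subtends the base at a larger angle than the apex, so the companion is forced inside the circle'') is false for a fixed choice of edge: in $\Lambda=\Z^2$ with $\mu=(3,1)$, $\nu=(1,3)$, $w=(1,2)$, the circumcircle of $(0,\mu,w)$ is $x^2+y^2-3x-y=0$ and $w-\nu=(0,-1)$ has power $+2$, i.e.\ lies strictly \emph{outside}; only the other edge works there. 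So the proof as written stops exactly where the real work begins, and no criterion is given for which edge succeeds.

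The claim is in fact true, so your route can be completed. Writing $w=s\mu+t\nu$ with $s,t>0$, $s+t\le1$, a direct computation shows that the in-circle numerators coincide: the power of $w-\nu$ relative to the circle through $0,\mu,w$ and the power of $w-\mu$ relative to the circle through $0,\nu,w$ both have numerator $N=\bigl(t(1-t)|\nu|^2-s(1-s)|\mu|^2\bigr)\det(\mu,\nu)$, while their denominators $\det(\mu,w)=t\det(\mu,\nu)$ and $\det(\nu,w)=-s\det(\mu,\nu)$ have opposite signs; hence the product of the two powers equals $-N^2/\bigl(st\,\det(\mu,\nu)^2\bigr)\le0$, and at least one of the two circumdisk conditions always holds, contradicting condition (3) of Lemma~\ref{33} for one of the two adjacencies. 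With that identity supplied your argument is a valid, genuinely different proof --- but note how much heavier it is than the paper's: the paper never touches circumcircles, instead using that Delaunay edges are translation invariant ($\ell(\lambda,\lambda+\lambda_i)$ is Delaunay for every $\lambda\in\Lambda$) and pairwise non-crossing, so the parallelograms spanned by $\lambda_1=mz-a$ and $\lambda_2=nz-b$ contain no further lattice points; hence $\Lambda=\lambda_1\Z+\lambda_2\Z$ and the determinant is $\pm1$ at once.
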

\begin{proof}
  Let $\lambda_1 = mz-a$, $\lambda_2 = nz-b$.  We assume $\lambda_1
  \neq \pm \lambda_2$, and show that $mb-na = \pm1$.  The line
  segments $\ell(\lambda, \lambda+ \lambda_1)$, $\ell(\lambda,
  \lambda+ \lambda_2)$ are Delaunay edges for all $\lambda \in
  \Lambda(z)$.  Since Delaunay edges do not intersect, the
  parallelogram $\square(\lambda, \lambda+\lambda_1,
  \lambda+\lambda_1+\lambda_2, \lambda+\lambda_2)$ has no points of
  $\Lambda$ in its interior, or on the edges, except the corners
  $\lambda, \lambda+\lambda_1, \lambda+\lambda_2,
  \lambda+\lambda_1+\lambda_2$, for any $\lambda \in \Lambda$.  So we
  have $\Lambda(z) = z\zahl + \zahl = \lambda_1 \zahl + \lambda_2
  \zahl$.  This implies that the matrix $\begin{pmatrix} m & n \\ a &
    b \end{pmatrix}$ has an inverse matrix with integer components, so
  $|mb-na| = 1$.
\end{proof}

\begin{lemma}
  Let $\frac{a}{m} < \frac{b}{n}$ be an opposed parastichy pair of
  $\Lambda(z)$.  Then we have either $a=p_i$, $m=q_i$, $b=p_{i,k}$,
  $n=q_{i,k}$ for some $i$ even or $a=p_{i,k}$, $m=q_{i,k}$, $b=p_i$,
  $n=q_i$ for some $i$ odd, and $0<k\le a_{i+1}$.
\end{lemma}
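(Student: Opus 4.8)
The plan is to show that an opposed parastichy pair, read off against $x=\re(z)$, is precisely a Farey interval containing $x$, and then to import its description from Lemma~\ref{38}. First I would fix the orientation forced by the hypotheses. Since $\frac{a}{m}<\frac{b}{n}$ and $(mx-a)(nx-b)<0$, the only sign pattern consistent with the ordering is $mx-a>0$ and $nx-b<0$: the alternative $mx-a<0<nx-b$ gives $\frac{b}{n}<x<\frac{a}{m}$, contradicting $\frac{a}{m}<\frac{b}{n}$. Hence $\frac{a}{m}<x<\frac{b}{n}$, so $x$ lies in the open interval $(\frac{a}{m},\frac{b}{n})$. Recall also that, by definition, parastichy indices are irreducible fractions with positive denominator, so $\frac{a}{m}$ and $\frac{b}{n}$ are irreducible and $m,n>0$.

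Next I would pin down the determinant. As $\frac{a}{m},\frac{b}{n}$ form an opposed parastichy pair, both $V(mz-a)$ and $V(nz-b)$ are adjacent to $V(0)$, so Lemma~\ref{12} gives $|mb-na|\in\{0,1\}$. Since $\frac{a}{m}\neq\frac{b}{n}$ we have $mb-na\neq0$, and from $\frac{b}{n}-\frac{a}{m}=\frac{mb-na}{mn}>0$ with $mn>0$ we get $mb-na=1$. Thus $\frac{a}{m}<\frac{b}{n}$ is a Farey pair, so by Lemma~\ref{36} the interval $(\frac{a}{m},\frac{b}{n})$ is a Farey interval, and by the first step it contains $x$. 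Lemma~\ref{38} then applies verbatim: either $(\frac{a}{m},\frac{b}{n})=(\frac{p_i}{q_i},\frac{p_{i,k}}{q_{i,k}})$ with $i$ even, or $(\frac{a}{m},\frac{b}{n})=(\frac{p_{i,k}}{q_{i,k}},\frac{p_i}{q_i})$ with $i$ odd, in either case with $0\le k<a_{i+1}$. Matching irreducible fractions with positive denominators yields the claimed equalities $a=p_i,\,m=q_i,\dots$ directly whenever $k\ge1$.

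The only delicate point is reconciling the index range $0\le k<a_{i+1}$ produced by Lemma~\ref{38} with the range $0<k\le a_{i+1}$ in the statement. Here I would use the boundary identities $p_{i,0}=p_{i-1},\,q_{i,0}=q_{i-1}$ and $p_{i,a_{i+1}}=p_{i+1},\,q_{i,a_{i+1}}=q_{i+1}$ to rewrite each $k=0$ case as a $k=a_{i+1}$ case of the opposite parity. Concretely, the even-$i$ case with $k=0$ gives $(\frac{p_i}{q_i},\frac{p_{i-1}}{q_{i-1}})$ with $i\ge2$, which is exactly the odd-$(i-1)$ case with $k=a_i$, namely $(\frac{p_{(i-1),a_i}}{q_{(i-1),a_i}},\frac{p_{i-1}}{q_{i-1}})$; symmetrically the odd-$i$, $k=0$ case is the even-$(i-1)$, $k=a_i$ case. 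The degenerate value $(i,k)=(0,0)$ never arises, since $\frac{p_{-1}}{q_{-1}}$ is not a finite fraction and cannot be an endpoint of a bounded Farey interval. This reparametrization converts the two families of Lemma~\ref{38} into the two families of the statement, each now ranging over $0<k\le a_{i+1}$.

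I expect the main obstacle to be purely this indexing translation; the geometric content is carried entirely by Lemma~\ref{12}, which turns adjacency to $V(0)$ into the unimodular relation $mb-na=1$, while everything else is the continued-fraction bookkeeping of Section~2. A careful treatment of the endpoints $k=0$ and $k=a_{i+1}$, together with the parity shift $i\mapsto i-1$, is what requires attention, and I would write out the even and odd cases explicitly to make the correspondence unambiguous.
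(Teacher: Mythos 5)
Your proof is correct and takes essentially the same route as the paper's: Lemma~\ref{12} turns the adjacency into $mb-na=\pm1$, so $(\frac{a}{m},\frac{b}{n})$ is a Farey interval containing $x$, and Lemma~\ref{38} identifies it. The paper's proof is just a one-line citation of those two lemmas; your extra steps (the sign analysis giving $\frac{a}{m}<x<\frac{b}{n}$, the appeal to Lemma~\ref{36}, and the reindexing between $0\le k<a_{i+1}$ and $0<k\le a_{i+1}$ via $p_{i,0}=p_{i-1}$, $p_{i,a_{i+1}}=p_{i+1}$) merely make explicit the bookkeeping the paper leaves implicit.
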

\begin{proof}
  The proof follows from Lemma~\ref{38}, since $(\frac{a}{m},
  \frac{b}{n}) \ni x$ is a Farey interval by Lemma~\ref{12}.
\end{proof}

Note that $q_i z - p_i = \fracpart{q_i x} + q_i y \rmi$, $q_{i,k} -
p_{i,k} = \fracpart{q_{i,k} x} + q_{i,k} y \rmi$ for $i\ge0$, $0 \le k
\le a_{i+1}$.  Since $q_{i,k+1} > q_{i,k}$ and
$|\fracpart{q_{i,k+1}x}| < |\fracpart{q_{i,k}x}|$, we have
\begin{equation}
\label{30}
 |\angle(q_{i}z-p_i ,0, q_{i,k+1}z-p_{i,k+1})|
   < |\angle( q_{i}z-p_i, 0, q_{i,k}z-p_{i,k})|
\end{equation}
for $i\ge0$, $0\le k < a_{i+1}$.

\begin{lemma}
$|q_{i,k}z-p_{i,k}| > |q_iz - p_i|$ for $i\ge0$, $0<k<a_{i+1}$.
\end{lemma}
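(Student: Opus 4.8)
The plan is to compare squared moduli directly, exploiting that $q_{i,k}z-p_{i,k}$ depends linearly on $k$ and that each resulting term factors as a difference of squares. Using the representations $q_{i,k}z-p_{i,k}=\fracpart{q_{i,k}x}+q_{i,k}y\rmi$ and $q_iz-p_i=\fracpart{q_ix}+q_iy\rmi$ recorded just before (\ref{30}), the claim is equivalent to
\[
 \bigl(\fracpart{q_{i,k}x}^2-\fracpart{q_ix}^2\bigr)+\bigl(q_{i,k}^2-q_i^2\bigr)y^2>0,
 \qquad 0<k<a_{i+1}.
\]

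First I would record the linear structure of the real parts. Since $p_{i,k}=kp_i+p_{i-1}$ and $q_{i,k}=kq_i+q_{i-1}$, one has $\fracpart{q_{i,k}x}=q_{i,k}x-p_{i,k}=k\fracpart{q_ix}+\fracpart{q_{i-1}x}$. From the continued fraction discussion preceding Lemma~\ref{38}, the fractions $\tfrac{p_{i,k}}{q_{i,k}}$ for $0\le k\le a_{i+1}$ all lie on the same side of $x$ as $\tfrac{p_{i-1}}{q_{i-1}}$, which is opposite to the side of $\tfrac{p_i}{q_i}$; hence $\fracpart{q_{i,k}x}$ keeps the sign of $\fracpart{q_{i-1}x}$ throughout, giving
\[
 s_k:=|\fracpart{q_{i,k}x}|=|\fracpart{q_{i-1}x}|-k\,|\fracpart{q_ix}|,
\]
a quantity that is positive and strictly decreasing on $0\le k\le a_{i+1}$ (its least value $s_{a_{i+1}}=|\fracpart{q_{i+1}x}|$ is positive because $x$ is irrational).

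The heart of the argument is that both bracketed terms factor through neighboring indices. Writing $r:=|\fracpart{q_ix}|$, so that $s_{k\mp1}=s_k\pm r$, the difference of squares yields
\[
 \fracpart{q_{i,k}x}^2-\fracpart{q_ix}^2=s_k^2-r^2=s_{k-1}\,s_{k+1},
\]
while from $q_{i,k}\pm q_i=q_{i-1}+(k\pm1)q_i=q_{i,k\pm1}$ we get $q_{i,k}^2-q_i^2=q_{i,k-1}\,q_{i,k+1}$. Therefore
\[
 |q_{i,k}z-p_{i,k}|^2-|q_iz-p_i|^2=s_{k-1}\,s_{k+1}+q_{i,k-1}\,q_{i,k+1}\,y^2 .
\]
For $0<k<a_{i+1}$ both $k-1$ and $k+1$ lie in $[0,a_{i+1}]$, so $s_{k-1},s_{k+1}>0$ and $q_{i,k-1},q_{i,k+1}\ge0$; the right-hand side is strictly positive, proving the lemma.

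The step I expect to be delicate is keeping all four factors positive simultaneously, i.e. ensuring $k-1\ge0$ and $k+1\le a_{i+1}$ so the $s$-factors survive, and handling the single degenerate case $i=0$, $k=1$, where $q_{i,k-1}=q_{-1}=0$ kills the $y$-term. In that case $k<a_{i+1}=a_1$ forces $a_1\ge2$, hence $s_{k+1}=s_2\ge s_{a_1}=|\fracpart{q_1x}|>0$, so the surviving term $s_{k-1}s_{k+1}>0$ still closes the argument. Everything else is the elementary difference-of-squares bookkeeping above.
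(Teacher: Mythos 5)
Your proof is correct and is essentially the paper's own argument: both compare $|q_{i,k}z-p_{i,k}|^2=\fracpart{q_{i,k}x}^2+q_{i,k}^2y^2$ with $|q_iz-p_i|^2=\fracpart{q_ix}^2+q_i^2y^2$ componentwise, resting on the same two monotonicity facts (that $|\fracpart{q_{i,k}x}|$ strictly decreases in $k$ while $q_{i,k}$ increases), with your difference-of-squares factorization through the neighboring indices $k\pm1$ being only a different bookkeeping for what the paper extracts from the continued-fraction ratio identities. One small point in your favor: you explicitly treat the corner case $i=0$, $k=1$, where $q_{0,1}=q_0=1$, so the paper's claimed strict inequality $q_{i,k}/q_i>1$ actually fails there; the conclusion survives (in both write-ups) because the real-part comparison is strict.
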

\begin{proof}
Since
\begin{equation}
\label{71}
\frac{q_{i,k}}{q_i} = k + \frac{1}{a_j + \frac{1}{a_{j-1}+\dots}} > 1,
\quad
-\frac{\fracpart{q_{i,k}x}}{\fracpart{q_ix}} = a_{j+1}-k+\frac{1}{a_{j+2}+\dots} >1,
\end{equation}
we have
\[
|q_iz-p_i|^2
= \fracpart{q_ix}^2 + q_i^2y^2
< \fracpart{q_{i,k}x}^2 + q_{i,k}^2y^2
= |q_{i,k}z-p_{i,k}|.
\]
\end{proof}

\begin{lemma}
Let $a,b,c,d,t\in\real_{>0}$.  Then $\im\left(
\frac{-c+td\rmi}{a+tb\rmi} \right) \le
\half(\frac{c}{a}+\frac{d}{b})$.
\end{lemma}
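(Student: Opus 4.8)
The plan is to compute the imaginary part explicitly by rationalizing the denominator, and then to observe that the desired inequality factors through a common positive quantity, reducing everything to a perfect-square inequality.

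First I would multiply numerator and denominator of $\frac{-c+td\rmi}{a+tb\rmi}$ by the conjugate $a - tb\rmi$ of the denominator. The denominator becomes $a^2 + t^2 b^2$, and expanding the numerator $(-c+td\rmi)(a-tb\rmi)$ and collecting the coefficient of $\rmi$ gives
\[
\im\left(\frac{-c+td\rmi}{a+tb\rmi}\right) = \frac{t(bc+ad)}{a^2 + t^2 b^2}.
\]
Next I would put the target right-hand side over a common denominator,
\[
\half\left(\frac{c}{a}+\frac{d}{b}\right) = \frac{bc+ad}{2ab},
\]
so that the factor $bc+ad$ appears on both sides.

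Since $a,b,c,d,t > 0$, the quantity $bc+ad$ is strictly positive and may be cancelled. The claim is therefore equivalent to $\frac{t}{a^2+t^2b^2} \le \frac{1}{2ab}$, that is, to $2abt \le a^2 + t^2 b^2$, which rearranges to $(a - tb)^2 \ge 0$. This is an instance of the arithmetic–geometric mean inequality (equivalently, completing the square), with equality exactly when $t = a/b$, and it holds unconditionally.

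There is no real obstacle here: the computation of $\im$ is routine, and the single useful observation is that the two "off-diagonal" parameters $c$ and $d$ enter both sides only through the common factor $bc+ad$, which cancels and removes them from the problem. What remains is the trivial bound $(a-tb)^2 \ge 0$. I would keep the write-up short, since the content is entirely elementary once the factorization is noticed.
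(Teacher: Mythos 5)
Your proof is correct and follows essentially the same route as the paper: compute $\im\left(\frac{-c+td\rmi}{a+tb\rmi}\right) = \frac{t(ad+bc)}{a^2+t^2b^2}$, then bound $\frac{t}{a^2+t^2b^2} \le \frac{1}{2ab}$ via AM--GM (i.e.\ $(a-tb)^2 \ge 0$). The paper's proof is just a more compressed write-up of exactly this computation.
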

\begin{proof}
$\im\left( \frac{-c+td\rmi}{a+tb\rmi} \right) 
= \frac{t(ad+bc)}{a^2+t^2b^2} \le \frac{ad+bc}{2ab}
= \half(\frac{c}{a}+\frac{d}{b})$.
\end{proof}

\begin{lemma}
\label{68}
$\left| \im\left( \frac{q_{i,k}z-p_{i,k}}{q_iz-p_i} \right) \right|
\le 1+\half a_{i+1}$ for $i\ge0$, $0 \le k \le a_{i+1}$,
$(i,k)\neq(0,0)$.
\end{lemma}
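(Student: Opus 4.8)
The plan is to cast the quotient $(q_{i,k}z-p_{i,k})/(q_iz-p_i)$ into the normal form $\frac{-c+td\rmi}{a+tb\rmi}$ of the preceding lemma and then estimate the two ratios $c/a$ and $d/b$ separately. Write $w=q_iz-p_i=\fracpart{q_ix}+q_iy\rmi$ and $W=q_{i,k}z-p_{i,k}=\fracpart{q_{i,k}x}+q_{i,k}y\rmi$, with $y=\im(z)>0$. A short computation using $q_ip_{i-1}-q_{i-1}p_i=(-1)^i$ shows that $\im(W/w)$ has sign $(-1)^i$; moreover the real parts $\fracpart{q_ix}=q_ix-p_i$ and $\fracpart{q_{i,k}x}=q_{i,k}x-p_{i,k}$ always have opposite signs, since $p_{i,k}/q_{i,k}$ lies on the side of $x$ opposite to $p_i/q_i$ for every $0\le k\le a_{i+1}$.

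First I would treat $i$ even, where $\fracpart{q_ix}>0>\fracpart{q_{i,k}x}$, so putting $a=\fracpart{q_ix}$, $b=q_i$, $c=-\fracpart{q_{i,k}x}$, $d=q_{i,k}$, $t=y$ makes $W/w$ literally equal to $\frac{-c+td\rmi}{a+tb\rmi}$ with all five parameters positive. (Positivity of $d=q_{i,k}=kq_i+q_{i-1}$ is exactly where the hypothesis $(i,k)\ne(0,0)$ is used, since $q_{0,0}=q_{-1}=0$.) The preceding lemma then gives $\im(W/w)\le\half\big(c/a+d/b\big)$. For $i$ odd the real parts switch sign, and applying the same lemma to $\overline{W}/\overline{w}=(-\overline{W})/(-\overline{w})$ — whose imaginary part equals $-\im(W/w)=\left|\im(W/w)\right|$ — produces the identical upper bound. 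The two cases therefore merge into
\[
\left|\im(W/w)\right|\le\half\left(\frac{|\fracpart{q_{i,k}x}|}{|\fracpart{q_ix}|}+\frac{q_{i,k}}{q_i}\right).
\]

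It then remains to bound the two ratios, which is where the continued-fraction recursions enter. From $q_{i,k}=kq_i+q_{i-1}$ and $0<q_{i-1}<q_i$ one gets $q_{i,k}/q_i=k+q_{i-1}/q_i<k+1$. From $\fracpart{q_{i,k}x}=k\fracpart{q_ix}+\fracpart{q_{i-1}x}$, the sign alternation, and the recursion $|\fracpart{q_{i-1}x}|=a_{i+1}|\fracpart{q_ix}|+|\fracpart{q_{i+1}x}|$ one gets $|\fracpart{q_{i,k}x}|/|\fracpart{q_ix}|=|\fracpart{q_{i-1}x}|/|\fracpart{q_ix}|-k=a_{i+1}-k+|\fracpart{q_{i+1}x}|/|\fracpart{q_ix}|<a_{i+1}-k+1$, using $|\fracpart{q_{i+1}x}|<|\fracpart{q_ix}|$; these two estimates are precisely the content of (\ref{71}). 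Summing and halving gives $\left|\im(W/w)\right|<\half\big((a_{i+1}-k+1)+(k+1)\big)=1+\half a_{i+1}$, as desired.

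The only delicate point will be the sign bookkeeping: verifying that the real parts of $w$ and $W$ are of opposite sign so that $W/w$ (or, for odd $i$, its conjugate) genuinely has the normal form demanded by the preceding lemma, and checking that all five parameters $a,b,c,d,t$ are strictly positive — the sole failure being $d=q_{i,k}=0$ at the excluded pair $(i,k)=(0,0)$. The endpoints $k=0$ and $k=a_{i+1}$, at which $p_{i,k}/q_{i,k}$ reduces to $p_{i-1}/q_{i-1}$ or $p_{i+1}/q_{i+1}$, are subsumed by the same estimates and need no separate treatment.
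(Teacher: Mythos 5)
Your proof is correct and takes essentially the same route as the paper: write both vectors as $\fracpart{q_ix}+q_iy\rmi$ and $\fracpart{q_{i,k}x}+q_{i,k}y\rmi$, apply the preceding lemma's bound $\half\left(\frac{c}{a}+\frac{d}{b}\right)$, and evaluate the two ratios by the continued-fraction identities (\ref{71}) to get $\half\left((k+1)+(a_{i+1}-k+1)\right)=1+\half a_{i+1}$. The sign bookkeeping you spell out (opposite signs of the real parts, the sign $(-1)^i$ of the imaginary part, conjugation for odd $i$, and the role of $(i,k)\neq(0,0)$ in ensuring $q_{i,k}>0$) is exactly what the paper's terse proof leaves implicit.
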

\begin{proof}
\begin{align*}
\left| \im\left( \frac{q_{i,k}z-p_{i,k}}{q_iz-p_i} \right) \right| 
&= \left| \im\left( \frac{\fracpart{q_{i,k}x}+q_{i,k}y\rmi}{\fracpart{q_ix}+q_iy\rmi} \right) \right| \\
& \le \half \left( \frac{q_{i,k}}{q_i} - \frac{\fracpart{q_{i,k}x}}{\fracpart{q_ix}} 
\right) \\
&= \half \left( k + \frac{1}{a_i+\dots} 
  + a_{i+1}-k+\frac{1}{a_{i+2}+\dots} \right) \\ 
&\le 1+\half a_{i+1}.
\end{align*}
\end{proof}

\begin{lemma}
  \label{31}
  Let $i\ge0$, $0\le k<a_{i+1}$, $(i,k)\neq(0,0)$.   
  If $\frac{p_{i,k+1}}{q_{i,k+1}}$ is a
  parastichy index for $\Lambda(z)$,
  then $|\angle(q_{i}z-p_i, 0,
  q_{i,k}z - p_{i,k})| > \frac{\pi}{2}$.
\end{lemma}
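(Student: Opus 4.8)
The plan is to recognize the three lattice points in the statement as corners of a fundamental parallelogram, and then to read off the angle inequality directly from the adjacency criterion in Lemma~\ref{33}(3).

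First I would set $u = q_i z - p_i$, $w = q_{i,k}z - p_{i,k}$, and $v = q_{i,k+1}z - p_{i,k+1}$. Using the recursions $q_{i,k+1} = q_{i,k} + q_i$ and $p_{i,k+1} = p_{i,k} + p_i$ one gets at once $v = u + w$, so that $0, u, v, w$ are the successive vertices of a parallelogram whose diagonal is the segment $\ell(0,v)$. The identity $p_i q_{i,k} - q_i p_{i,k} = p_i q_{i-1} - q_i p_{i-1} = (-1)^{i-1}$ shows that $u,w$ form a basis of $\Lambda(z)$; in particular they are linearly independent (recall $z\notin\real$), and the constraint $(i,k)\neq(0,0)$ guarantees $q_{i,k}\ge1$, so $u$ and $w$ are genuinely non-collinear and every angle below is nondegenerate.

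Next I would invoke the hypothesis. Since $\frac{p_{i,k+1}}{q_{i,k+1}}$ is a parastichy index, the cell $V(v)$ is edge-adjacent to $V(0)$, so Lemma~\ref{33}(3) applies with $\lambda=0$, $\lambda'=v$: for admissible $s,t\in\Lambda$ one has $\angle(0,s,v)+\angle(v,t,0)<\pi$. I would feed in the two off-diagonal corners of the parallelogram. A direct computation gives
\[
\angle(0,u,v)=\Arg\left(\frac{-u}{w}\right)=\angle(v,w,0),
\]
so the two base angles of the parallelogram are equal; write $\phi$ for their common value $\Arg(-u/w)$. Choosing the order of the test points so that the relevant angle lies in $(0,\pi)$ (which is possible because $u$ and $w$ sit on opposite sides of the line through $0$ and $v$, and swapping the roles replaces $\phi$ by $\Arg(-w/u)=-\phi$), Lemma~\ref{33}(3) yields $2|\phi|<\pi$, that is $|\phi|<\frac{\pi}{2}$.

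Finally I would use that adjacent interior angles of a parallelogram are supplementary: the angle at the vertex $0$, namely $|\angle(u,0,w)|$, and the angle at the neighbouring vertex $u$, namely $|\angle(0,u,v)|=|\phi|$, satisfy $|\angle(u,0,w)|+|\phi|=\pi$. Hence
\[
|\angle(q_iz-p_i,\,0,\,q_{i,k}z-p_{i,k})| = |\angle(u,0,w)| = \pi-|\phi| > \tfrac{\pi}{2},
\]
as claimed. The main point requiring care is the bookkeeping of the principal argument, so that the two test points are correctly placed on opposite sides of $\ell(0,v)$ and Lemma~\ref{33}(3) genuinely applies; this is also exactly what upgrades the weak inequality suggested by the short-diagonal heuristic ($\ell(0,v)$ Delaunay forces $|u+w|\le|u-w|$, i.e.\ $u\cdot w\le0$) to the strict one, the borderline cocircular (rectangular) case $u\cdot w=0$ being precisely the configuration ruled out by edge-adjacency.
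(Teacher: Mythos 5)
Your proof is correct and is essentially the paper's own argument: both work in the parallelogram $\square(0,\, q_iz-p_i,\, q_{i,k+1}z-p_{i,k+1},\, q_{i,k}z-p_{i,k})$ and apply Lemma~\ref{33}(3) to its diagonal $\ell(0, q_{i,k+1}z-p_{i,k+1})$, using that the two base angles at the off-diagonal corners are equal and supplementary to the angle at the origin. The only cosmetic difference is that the paper argues by contrapositive (an angle $\le \pi/2$ at the origin forces the two opposite angles to sum to at least $\pi$, violating the Delaunay condition), splitting into $i$ even/odd to fix orientation, whereas you run the implication directly and handle orientation by swapping the two test points.
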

\begin{proof}
  First suppose that $i$ is even.  Assume that
  $\angle(q_{i,k}z-p_{i,k}, 0, q_{i}z - p_{i}) \le \frac{\pi}{2}$.  In
  the parallelogram $\square(0, q_i z - p_i, q_{i,k+1}z-p_{i,k+1},
  q_{i,k}z-p_{i,k})$, we have
\[ \angle(q_{i,k+1}z-p_{i,k+1}, q_{i,k}z-p_{i,k}, 0) + \angle(0,q_{i}z-p_i,
  q_{i,k+1}z-p_{i,k+1}) > \pi. 
\]
   So $p_{i,k+1}/q_{i,k+1}$ is not a parastichy index.
  The case for odd $i$ is shown by a similar argument.
\end{proof}

\begin{lemma}
  \label{32}
  Let $i\ge0$, $0< k \le a_{i+1}$, $(i,k)\neq(0,1)$.  If
  $\frac{p_{i,k-1}}{q_{i,k-1}}$ is a parastichy index for
  $\Lambda(z)$, then $|\angle(q_{i,k}z-p_{i,k}, 0, q_{i}z-p_i)| <
  \frac{\pi}{2}$.
\end{lemma}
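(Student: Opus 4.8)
The plan is to mirror the proof of Lemma~\ref{31}, but with the vector $q_iz-p_i$ replaced by its negative, reflecting the passage from the index $k$ to $k-1$ instead of to $k+1$. Write $v_{i,k}=q_{i,k}z-p_{i,k}$ and $v_i=q_iz-p_i$. From the recursions $q_{i,k}=kq_i+q_{i-1}$, $p_{i,k}=kp_i+p_{i-1}$ one reads off the key identity $v_{i,k-1}=v_{i,k}-v_i$, i.e.\ $v_{i,k-1}=v_{i,k}+(-v_i)$. Hence the four sites $0,\ v_{i,k},\ v_{i,k-1},\ -v_i$ form a parallelogram $\square(0,v_{i,k},v_{i,k-1},-v_i)$ whose diagonal is the segment $\ell(0,v_{i,k-1})$ and whose off-diagonal corners are $v_{i,k}$ and $-v_i$ (both genuine sites of $\Lambda$). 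The choice of $-v_i$ in place of $v_i$ is exactly the difference from Lemma~\ref{31}, and pinning down this parallelogram is the one conceptual step. The hypothesis $(i,k)\neq(0,1)$ is what guarantees $q_{i,k-1}>0$, so that $\frac{p_{i,k-1}}{q_{i,k-1}}$ is an honest fraction and $v_{i,k-1}$ an honest site; and the parallelogram is non-degenerate since $\im(v_{i,k}\overline{v_i})=(-1)^i y\neq0$, using the standard identity $q_ip_{i-1}-q_{i-1}p_i=(-1)^i$.

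Next I would compute the two angles subtended over the diagonal $\ell(0,v_{i,k-1})$ at the off-diagonal corners. Since consecutive interior angles of a parallelogram are supplementary and opposite interior angles are equal, the interior angles at $v_{i,k}$ and at $-v_i$ are equal and each equals $\pi$ minus the interior angle at the origin, the latter being the angle $\bigl|\angle(v_{i,k},0,-v_i)\bigr|$ between the sides $0v_{i,k}$ and $0(-v_i)$. As $-v_i$ is the antipode of $v_i$, we have $\bigl|\angle(v_{i,k},0,-v_i)\bigr|=\pi-\bigl|\angle(v_{i,k},0,v_i)\bigr|$, so the sum of the two subtended angles equals exactly $2\bigl|\angle(v_{i,k},0,v_i)\bigr|$. (Concretely, for $\lambda=0$, $\lambda'=v_{i,k-1}$ and witnesses $w=v_{i,k}$, $w'=-v_i$, both $\angle(0,v_{i,k},v_{i,k-1})$ and $\angle(v_{i,k-1},-v_i,0)$ reduce to $\Arg(v_{i,k}/v_i)$ via $v_{i,k-1}-v_{i,k}=-v_i$ and $v_{i,k-1}+v_i=v_{i,k}$.)

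Finally I would invoke the adjacency criterion Lemma~\ref{33}(3). If $\frac{p_{i,k-1}}{q_{i,k-1}}$ is a parastichy index, then $\ell(0,v_{i,k-1})$ is a Delaunay edge, so condition (3), applied with the opposite-side witnesses $w=v_{i,k}$ and $w'=-v_i\in\Lambda$, forces the sum of the two subtended angles to be strictly less than $\pi$. Combined with the computation above this gives $2\bigl|\angle(v_{i,k},0,v_i)\bigr|<\pi$, i.e.\ $\bigl|\angle(q_{i,k}z-p_{i,k},0,q_iz-p_i)\bigr|<\frac{\pi}{2}$, which is the assertion. As in Lemma~\ref{31} it is cleanest to treat $i$ even first, where $\re v_i>0>\re v_{i,k}$ makes $\angle(v_{i,k},0,v_i)>0$ and the two principal-argument angles in Lemma~\ref{33}(3) come out positive without adjustment, and then remark that odd $i$ follows by the symmetric choice of orientation. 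I expect this sign/orientation bookkeeping, needed so that the principal-argument angles of Lemma~\ref{33}(3) are genuinely the interior angles in $(0,\pi)$, to be the only real obstacle; the geometry itself is a direct parallelogram computation.
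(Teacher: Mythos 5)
Your proposal is correct and follows essentially the same route as the paper: the paper forms the parallelogram $\square(0,\,q_iz-p_i,\,(q_iz-p_i)+(q_{i,k}z-p_{i,k}),\,q_{i,k}z-p_{i,k})$ and translates it by $-(q_iz-p_i)$, which yields exactly your parallelogram $\square(0,\,v_{i,k},\,v_{i,k-1},\,-v_i)$ with diagonal $\ell(0,v_{i,k-1})$, then applies Lemma~\ref{33}(3) with the same opposite-corner witnesses and the same even/odd case split. The only cosmetic difference is that the paper argues by contraposition (assuming the angle is $\ge\frac{\pi}{2}$ and concluding the angle sum exceeds $\pi$), whereas you run the equivalent direct implication.
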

\begin{proof}
  First suppose that $i$ is even.  Assume that $\angle(q_{i}z-p_{i},
  0, q_{i,k}z-p_{i,k}) \ge \frac{\pi}{2}$.  In the parallelogram $P =
  \square(0, q_{i}z-p_i, (q_{i}z-p_i) + (q_{i,k}z-p_{i,k}),
  q_{i,k}z-p_{i,k})$, we have
  \[
  \angle(q_{i,k}z-p_{i,k}, 0, q_{i}z-p_i) +
  \angle(q_{i}z-p_i, (q_{i}z-p_i) + (q_{i,k}z-p_{i,k}), q_{i,k}z-p_{i,k}) >
  \pi.
  \]
  In the translated parallelogram 
  \[ P - (q_{i}z-p_i) =
  \square(-(q_{i}z-p_i), 0 , q_{i,k}z-p_{i,k},  q_{i,k-1}z-p_{i,k-1}),
  \]
  we have
  \[
  \angle(q_{i,k-1}z-p_{i,k-1},
  -(q_{i}z-p_i), 0) + \angle(0, q_{i,k}z-p_{i,k}, q_{i,k-1}z-p_{i,k-1}) > \pi.
  \]
  So $p_{i,k-1}/q_{i,k-1}$ is not a parastichy index.  The case for
  odd $i$ is similar.
\end{proof}

Let
\begin{equation}
  \label{26}
  \eta_{i,k}:=
  \sqrt{-\frac{(q_ix-p_i)(q_{i,k}x-p_{i,k})}{q_iq_{i,k}}}
\end{equation}
where $i\ge0$, $0 \le k \le a_{i+1}$, $(i,k)\neq(0,0)$.  It is called
Richards' formula \cite[p.29]{jean}.  We have
$\eta_{i,a_{i+1}}=\eta_{i+1,0}$ for $i\ge0$, and $\eta_{i,k} >
\eta_{i,k+1}$ for $i\ge0$, $0\le k < a_{i+1}$, $(i,k)\neq(0,0)$.

\begin{lemma}
\label{58}
  Let $z = x + \rmi y$.  Let $i\ge0$, $0\le k < a_{i+1}$,
  $(i,k)\neq(0,0)$.  If $y = \eta_{i,k}$, then the Voronoi cell $V(0)$
  is a rectangle with two parastichy indices $\frac{p_i}{q_i},
  \frac{p_{i,k}}{q_{i,k}}$.  If $\eta_{i,k+1} < y < \eta_{i,k}$, then
  $V(0)$ is a hexagon with three parastichy indices $\frac{p_i}{q_i},
  \frac{p_{i,k}}{q_{i,k}}, \frac{p_{i,k+1}}{q_{i,k+1}}$.
\end{lemma}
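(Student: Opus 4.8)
The plan is to read Richards' formula geometrically: $y=\eta_{i,k}$ is precisely the height at which the two lattice vectors $\lambda_1:=q_iz-p_i$ and $\lambda_2:=q_{i,k}z-p_{i,k}$ are orthogonal. Writing $\lambda_1=(q_ix-p_i)+q_iy\rmi$ and $\lambda_2=(q_{i,k}x-p_{i,k})+q_{i,k}y\rmi$, one has $\re(\lambda_1\overline{\lambda_2})=(q_ix-p_i)(q_{i,k}x-p_{i,k})+q_iq_{i,k}y^2$, and since $\frac{p_i}{q_i}$, $\frac{p_{i,k}}{q_{i,k}}$ straddle $x$ the first term is negative, so $\re(\lambda_1\overline{\lambda_2})$ vanishes exactly at $y=\eta_{i,k}$ and increases with $y$. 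Hence the angle between $\lambda_1$ and $\lambda_2$ is obtuse for $y<\eta_{i,k}$, a right angle at $y=\eta_{i,k}$, and acute for $y>\eta_{i,k}$; the same computation applied to $\lambda_3:=q_{i,k+1}z-p_{i,k+1}$ shows $\lambda_1\perp\lambda_3$ exactly at $y=\eta_{i,k+1}$. Throughout I will use that $\lambda_3=\lambda_1+\lambda_2$ and that $\{\lambda_1,\lambda_2\}$ is a basis of $\Lambda(z)$, since $q_ip_{i,k}-q_{i,k}p_i=q_ip_{i-1}-q_{i-1}p_i=(-1)^i$.

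For the rectangle case $y=\eta_{i,k}$, orthogonality of the basis $\{\lambda_1,\lambda_2\}$ makes $\Lambda(z)$ a rectangular lattice, whose Voronoi cell at $0$ is the rectangle $\{t_1\lambda_1+t_2\lambda_2:|t_1|,|t_2|\le\half\}$ with exactly the four edge-neighbours $\pm\lambda_1,\pm\lambda_2$. Thus $V(0)$ is a rectangle and its only parastichy indices are $\frac{p_i}{q_i}$ and $\frac{p_{i,k}}{q_{i,k}}$; the diagonal $\lambda_3$ is only corner-adjacent, so $\frac{p_{i,k+1}}{q_{i,k+1}}$ is not a parastichy index here.

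For the hexagon case $\eta_{i,k+1}<y<\eta_{i,k}$, the plan is to identify the Delaunay triangulation as the one whose fundamental domain is the pair of triangles $T_1=\triangle(0,\lambda_1,\lambda_3)$, $T_2=\triangle(0,\lambda_2,\lambda_3)$ cut from $\square(0,\lambda_1,\lambda_3,\lambda_2)$ along $\ell(0,\lambda_3)$. This triangulation has three edge-classes $\ell(0,\lambda_1)$, $\ell(0,\lambda_2)$, $\ell(0,\lambda_3)$, and by Lemma~\ref{33} it suffices to verify the adjacency (angle-sum) condition for one representative of each. Using the point symmetry of the relevant fundamental parallelogram about its centre, each condition collapses to a single angle being at most $\frac{\pi}{2}$: for $\ell(0,\lambda_3)$ it is $|\angle(\lambda_1,0,\lambda_2)|\ge\frac{\pi}{2}$, which holds because $y<\eta_{i,k}$; for $\ell(0,\lambda_2)$ it is $|\angle(\lambda_1,0,\lambda_3)|\le\frac{\pi}{2}$, which holds because $y>\eta_{i,k+1}$; and for $\ell(0,\lambda_1)$ it is $|\angle(\lambda_2,0,\lambda_3)|\le\frac{\pi}{2}$, which holds automatically since $\frac{p_{i,k}}{q_{i,k}}$ and $\frac{p_{i,k+1}}{q_{i,k+1}}$ lie on the same side of $x$, forcing $\re(\lambda_2\overline{\lambda_3})>0$. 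With the triangulation pinned down, Voronoi--Delaunay duality gives that $V(0)$ is the hexagon dual to the six neighbours $\pm\lambda_1,\pm\lambda_2,\pm\lambda_3$, so its parastichy indices are exactly $\frac{p_i}{q_i},\frac{p_{i,k}}{q_{i,k}},\frac{p_{i,k+1}}{q_{i,k+1}}$; as $y\uparrow\eta_{i,k}$ the edges dual to $\pm\lambda_3$ shrink to points and the hexagon degenerates to the rectangle of the previous paragraph.

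I expect the main obstacle to be the implication ``the three local angle conditions $\Rightarrow$ the global Delaunay triangulation, hence exactly six neighbours.'' One must rule out that some further lattice vector is a parastichy index, i.e.\ that the circumdisk of $T_1$ meets $\Lambda(z)$ only in its three vertices. For a lattice triangulation this is the standard fact that local Delaunayness propagates to global Delaunayness, but to keep the argument self-contained I would prove it directly: among the lattice points on each side of the line through $0$ and $\lambda_3$, the one subtending the largest angle on $\ell(0,\lambda_3)$ is the apex $\lambda_1$ (resp.\ $\lambda_2$) of the adjacent triangle, by angle comparisons of the type in~(\ref{30}), so the angle-sum test of Lemma~\ref{33} need only be checked at those apices, exactly as above.
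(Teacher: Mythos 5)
Your proof is correct, but it takes a genuinely different route from the paper's. The paper argues top--down, by elimination: it has already established (via Lemmas~\ref{38} and \ref{39}, together with Lemma~\ref{12}) that every parastichy index of $\Lambda(z)$ is a principal or intermediate convergent of $x$, and the proof of Lemma~\ref{58} then combines the monotonicity (\ref{30}) with Lemmas~\ref{31} and \ref{32} --- which use only the easy, \emph{necessary} direction of the adjacency criterion of Lemma~\ref{33} --- to strike out every convergent except $\frac{p_i}{q_i}$, $\frac{p_{i,k}}{q_{i,k}}$ (plus $\frac{p_{i,k+1}}{q_{i,k+1}}$ in the hexagonal regime); it never verifies any adjacency positively, but instead concludes by counting, since a lattice Voronoi cell is either a rectangle (two index pairs) or a hexagon (three index pairs). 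You argue bottom--up: you exhibit the expected Delaunay triangulation (the basis parallelogram $\square(0,\lambda_1,\lambda_3,\lambda_2)$ split along its short diagonal, legitimate since $q_ip_{i,k}-q_{i,k}p_i=(-1)^i$ makes $\lambda_1,\lambda_2$ a basis), verify local Delaunayness of the three edge classes (your central-symmetry reductions and the three angle inequalities are all correct), and pass from local to global. This buys independence from the Farey/approximation machinery of Sections 2--3, at the price of the local-to-global step, which is exactly what the paper's counting trick avoids. That step is the standard Delaunay lemma for locally finite triangulations and is citable, so your proof is complete modulo a reference; but note that your self-contained sketch of it is not yet a proof: the assertion that the apex of the adjacent triangle maximizes the angle subtended over $\ell(0,\lambda_3)$ among all lattice points on its side is precisely the global empty-circumdisk statement in disguise, and comparisons of the type (\ref{30}), which bound angles \emph{at the origin} between lattice vectors, say nothing directly about angles subtended from other lattice points. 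To close it self-containedly you would need the usual walking/flipping argument --- or you could simply borrow the paper's counting step: once your local checks rule in the six candidate neighbours and orthogonality rules out a rectangle, the rectangle-or-hexagon dichotomy forces the adjacency structure you want.
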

\begin{proof}
  First suppose that $y = \eta_{i,k}$.  Then we have
  \[
  (q_{i,k}x-p_{i,k})(q_i x- p_i) + q_{i,k}y \cdot q_i y = 0,
  \]
  and so $|\angle(q_{i,k}z-p_{i,k}, 0, q_{i}z-p_i)| = \frac{\pi}{2}$.  By
  (\ref{30}) and Lemmas~\ref{31}, \ref{32}, a parastichy index is
  either $\frac{p_i}{q_i}$ or $\frac{p_{i,k}}{q_{i,k}}$.

  Next suppose that $\eta_{i,k} > y > \eta_{i,k+1}$.  Then we have
  $|\angle(q_{i,k+1}z-p_{i,k+1},0, q_{i}z-p_i)| < \frac{\pi}{2} <
  |\angle(q_{i,k}z-p_{i,k}, 0, q_{i}z-p_i)|$.  Lemmas~\ref{31},
  \ref{32} imply that a parastichy index is either $\frac{p_i}{q_i}$,
  $\frac{p_{i,k}}{q_{i,k}}$, or $\frac{p_{i,k+1}}{q_{i,k+1}}$.  The
  Voronoi cell $V(0)$ is not a rectangle, so it is a hexagon and has
  three parastichy indices.
\end{proof}

\begin{lemma}
  \label{20}
  The area of the Voronoi cell $V(0)$ is equal to $y=\im(z)$.
\end{lemma}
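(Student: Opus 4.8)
The plan is to avoid computing the shape of the cell directly and instead exploit the lattice symmetry of the tessellation. The central observation is that $V(0)$ is a fundamental domain for the translation action of $\Lambda = \Lambda(z)$; once this is established, its area must equal the covolume of $\Lambda$, and the covolume of $z\zahl + \zahl$ is exactly $y = \im(z)$.

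First I would record the translation invariance of the Voronoi construction. Since $\Lambda - \lambda = \Lambda$ for every $\lambda \in \Lambda$, the defining inequalities $|\zeta| \le |\zeta - \lambda'|$ (for all $\lambda' \in \Lambda$) that cut out $V(0)$ are carried, under the map $\zeta \mapsto \zeta + \lambda$, precisely to the inequalities $|\zeta - \lambda| \le |\zeta - \lambda''|$ defining $V(\lambda)$. Hence $V(\lambda) = V(0) + \lambda$ for all $\lambda \in \Lambda$. Next I would invoke the tiling property already recorded before Lemma~\ref{33}, namely $\cx = \bigcup_{\lambda \in \Lambda} V(\lambda)$ with $\interior(V(\lambda)) \cap \interior(V(\lambda')) = \emptyset$ for $\lambda \neq \lambda'$. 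Combining this with the translation identity, the translates $\set{V(0) + \lambda}_{\lambda \in \Lambda}$ tile the plane with pairwise disjoint interiors; that is, $V(0)$ is a (measurable) fundamental domain for $\Lambda$.

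The passage from the tiling to the area equality is the standard fact that any measurable fundamental domain of a lattice has measure equal to the covolume, which I would make precise by an averaging integral. Let $P$ be the fundamental parallelogram spanned by $1$ and $z$. Because the tiling covers the plane with disjoint interiors, the counting function $\sum_{\lambda \in \Lambda} \mathbf{1}_{V(0)+\lambda}$ equals $1$ almost everywhere, so integrating over $P$ gives $|P| = \sum_{\lambda} |(V(0)+\lambda) \cap P|$; since $P$ is itself a fundamental domain, the pieces $(V(0)+\lambda)\cap P$ translate back to reassemble $V(0)$ exactly once, whence the right-hand side equals $|V(0)|$. Therefore $|V(0)| = |P| = \mathrm{covol}(\Lambda)$.

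Finally the covolume is read off from the basis: $\mathrm{covol}(\Lambda) = \left| \det \begin{pmatrix} 1 & 0 \\ x & y \end{pmatrix} \right| = |y| = y$, the last equality holding because $y > 0$ in the regime considered (the area being positive). This yields $|V(0)| = y$. I do not expect a genuine obstacle in this argument; the only point requiring a little care is the averaging step, i.e.\ justifying that a measurable fundamental domain has measure equal to the covolume, and the displayed integral is exactly what secures it. An alternative, more hands-on route would dissect $V(0)$ into the triangles/rectangle described in Lemma~\ref{58} and sum their areas, but the symmetry argument is cleaner and does not depend on the combinatorial type of the cell.
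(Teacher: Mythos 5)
Your proof is correct, and it takes a cleaner, more self-contained route than the paper's. The paper first invokes Lemma~\ref{12} to extract a basis $mz-a$, $nz-b$ of $\Lambda$ with $|mb-na|=1$ from two cells adjacent to $V(0)$, and then asserts that the common area of the translated cells equals the area of the fundamental parallelogram $\square(0, mz-a, (m+n)z-(a+b), nz-b)$, which it evaluates by the determinant $|(mb-na)y| = y$. The step ``tile area $=$ parallelogram area'' is exactly the fundamental-domain fact that you prove explicitly with the counting-function argument $\sum_{\lambda}\mathbf{1}_{V(0)+\lambda} = 1$ a.e.\ and the cut-and-reassemble identity $\sum_{\lambda}|(V(0)+\lambda)\cap P| = |V(0)|$; the paper leaves that justification implicit. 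By supplying it, you can work with the standard basis $1, z$ and the determinant $\det\begin{pmatrix} 1 & 0 \\ x & y \end{pmatrix} = y$ directly, bypassing Lemma~\ref{12} and the adjacency combinatorics altogether. What the paper's route buys is brevity (Lemma~\ref{12} is already on hand) and a parallelogram tied to the actual neighbors of the cell, which fits the parastichy discussion that follows; what your route buys is independence from the adjacency structure and an argument that works verbatim for the Voronoi cell of any lattice in the plane. One small point of care: the almost-everywhere claim uses that the cell boundaries are Lebesgue-null, which holds here since the cells are convex polygons (boundaries of measure zero) and the union over the countable lattice is still null.
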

\begin{proof}
  By Lemma~\ref{12}, the cell $V(0)$ is adjacent to some $V(mz-a)$,
  $V(nz-b)$, with $|mb-na|=1$.  The cells are parallel translates to
  each other, $V(\lambda) = \lambda + V(0)$, so the area of the cells
  $|V(\lambda)| = |V(0)|$ are equal to the area of the parallelogram
  $H = \square(0, mz-a, (m+n)z-(a+b), nz-b)$,
  \[
  |H| = \left| \det \begin{pmatrix} mx -a & nx -b \\ my & ny
  \end{pmatrix} \right|
  = |(mb - na)y| = y.
  \]
\end{proof}

\begin{figure}
\centering
(a)
\includegraphics[scale=0.6]{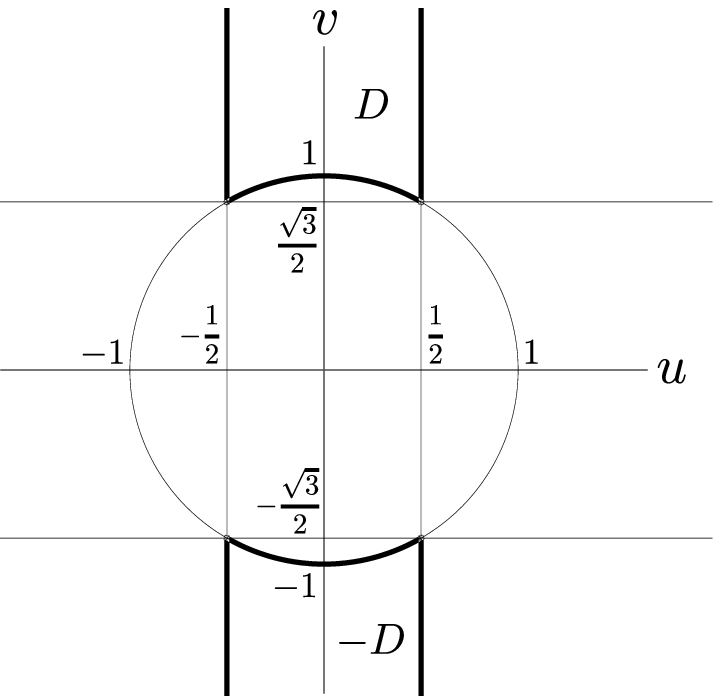}
(b)
\includegraphics[scale=0.6]{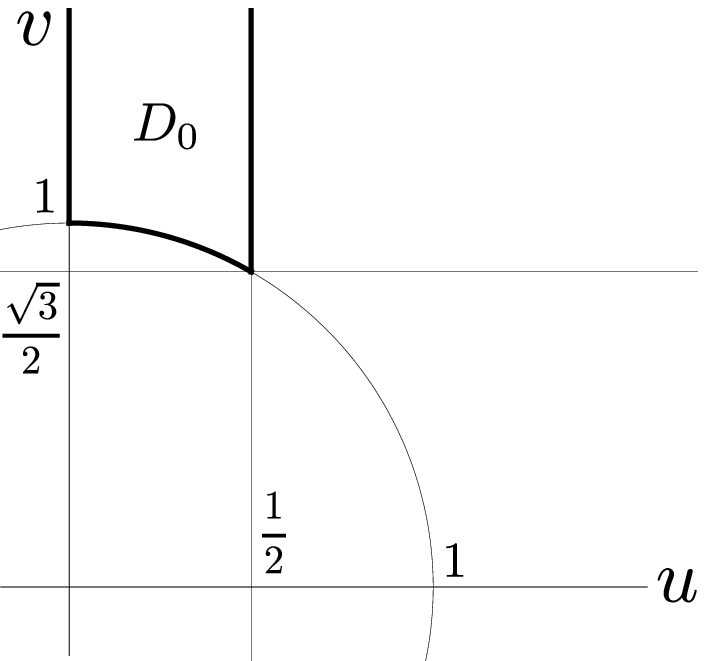}
\caption{(a) The set $\fdom \cup (-\fdom)$ of parameters $\lambda_2 / \lambda_1$
for the pairs of generators $\lambda_1, \lambda_2$ of linear lattices.
(b) The half region $\fdom_0$.}
\end{figure}

We fix $x \in \real$ and suppose that $\eta_{i,k} \ge y > \eta_{i,k+1}$.
There exist $\lambda_1, \lambda_2 
\in \set{ q_iz-p_i, q_{i,k}z-p_{i,k}, q_{i,k+1}z-p_{i,k+1}}$ such that
\[
  \lambda_{2}/\lambda_{1} \in \fdom \cup (-\fdom),
\]
where 
\[
\fdom = \set{w=u+v\rmi \in \cx :
  -\half \le u \le \half, v \ge \frac{\sqrt{3}}{2}, |w|\ge1}.
\]
Denote by
\[
\delta_y := |\lambda_1|,
\]
the length of the shortest vector in $\Lambda(z)$, $z=x+y\rmi$.  The
area of the parallelogram $H = \square(0, \lambda_1, \lambda_1 +
\lambda_2, \lambda_2)$ is equal to $v_y \delta_y^2$, where $v_y :=
|\im(\lambda_2/\lambda_1)|$.  By Lemma~\ref{20} we have $v_y
\delta_y^2 = y$, so
\begin{equation}
\label{72}
\delta_y^2 = \frac{y}{v_y} \le \frac{2}{\sqrt{3}}y.
\end{equation}

\section{Voronoi cells on a perturbation of a linear lattice}

A spiral lattice is locally a perturbation of a linear lattice.  Here
we show that the area of the Voronoi cell in a linear lattice is close
to the area of the Voronoi cell in the perturbed lattice.  A vertex of
a Voronoi tessellation is given as a circumcenter of a triangle.  So
we first consider the perturbation of the circumdisk of a triangle.

\begin{lemma}
\label{50}
Let $f : \cx \to \real$ be a $C^1$ function, and denote by $f(x,y) =
f(x+y\rmi)$ by abuse of notation.  Let $\grad f = (\frac{\partial
  f}{\partial x}, \frac{\partial f}{\partial y})$.  Then we have
$|\grad f|^2 = 4 \frac{\partial f}{\partial z} \frac{\partial
  f}{\partial \bar{z}}$.
\end{lemma}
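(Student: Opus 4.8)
The plan is to expand both sides in terms of the real partial derivatives $f_x = \partial f/\partial x$ and $f_y = \partial f/\partial y$ using the Wirtinger calculus. First I would recall the definitions of the complex derivatives: with $z = x + y\rmi$,
\[
\frac{\partial}{\partial z} = \half\left(\frac{\partial}{\partial x} - \rmi\frac{\partial}{\partial y}\right),
\qquad
\frac{\partial}{\partial \bar z} = \half\left(\frac{\partial}{\partial x} + \rmi\frac{\partial}{\partial y}\right).
\]
Applying these to $f$ gives $\partial f/\partial z = \half(f_x - \rmi f_y)$ and $\partial f/\partial \bar z = \half(f_x + \rmi f_y)$.

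Next I would invoke the hypothesis that $f$ is real-valued: because $f_x$ and $f_y$ are then real functions, the two expressions above are complex conjugates of one another, $\partial f/\partial \bar z = \overline{\partial f/\partial z}$. Multiplying them and clearing the factor of $4$,
\[
4\,\frac{\partial f}{\partial z}\,\frac{\partial f}{\partial \bar z}
= (f_x - \rmi f_y)(f_x + \rmi f_y)
= f_x^2 + f_y^2
= |\grad f|^2,
\]
where the two imaginary cross terms $\pm\,\rmi f_x f_y$ cancel. This is exactly the claimed identity.

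The computation is completely routine, so there is no real obstacle here; the only point that genuinely uses the hypothesis is that $f$ takes real values, which is what forces the cross terms to cancel and leaves the manifestly real, nonnegative quantity $f_x^2 + f_y^2$. Equivalently, one may simply observe that $4(\partial f/\partial z)(\partial f/\partial \bar z) = 4\,|\partial f/\partial z|^2$, so the asserted equality is just the statement that $|\grad f|^2 = 4\,|\partial f/\partial z|^2$ for real $f$.
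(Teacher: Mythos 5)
Your proof is correct and is essentially the paper's own argument: both simply expand $\partial f/\partial z$ and $\partial f/\partial \bar z$ via the Wirtinger definitions and reduce the identity to $(f_x-\rmi f_y)(f_x+\rmi f_y)=f_x^2+f_y^2$, with real-valuedness of $f$ ensuring this equals $|\grad f|^2$. If anything, your direct multiplication of the two conjugate factors is slightly cleaner than the paper's displayed computation, which inverts the relations to write $f_x,f_y$ in terms of $\partial f/\partial z,\partial f/\partial \barz$ and in doing so picks up a sign typo (its middle expression $\left(\frac{\partial f}{\partial z}+\frac{\partial f}{\partial \barz}\right)^2+\left(\frac{\partial f}{\partial z}-\frac{\partial f}{\partial \barz}\right)^2$ should have a minus sign, since $f_y^2=-\left(\frac{\partial f}{\partial z}-\frac{\partial f}{\partial \barz}\right)^2$).
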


\begin{proof}
By definition we have $\frac{\partial f}{\partial z} = \half \left(
\frac{\partial f}{\partial x} - \rmi \frac{\partial f}{\partial y}
\right)$, $\frac{\partial f}{\partial \barz} = \half \left(
\frac{\partial f}{\partial x} + \rmi \frac{\partial f}{\partial y}
\right)$.  So we obtain
\[
|\grad f|^2
= \left( \frac{\partial f}{\partial x} \right)^2 + 
\left( \frac{\partial f}{\partial y} \right)^2
=  \left( \frac{\partial f}{\partial z}
 + \frac{\partial f}{\partial \barz}\right)^2 
 + \left( \frac{\partial f}{\partial z}
 - \frac{\partial f}{\partial \barz}\right)^2 
= 4 \frac{\partial f}{\partial z} \frac{\partial f}{\partial \bar{z}}.
\]
\end{proof}

\begin{lemma}
\label{51}
Let $z, z' \in \cx$.  Let $f$ be a real $C^1$ function defined in a
neighborhood of the line segment $\ell(z, z')$.  Suppose that $|\grad
f| \le M_0$ for some $M_0>0$.  Then we have
\[ |f(z') - f(z)|
\le M_0 |z' - z|.
\]
\end{lemma}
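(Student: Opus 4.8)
The plan is to reduce this two-variable estimate to the one-dimensional mean value inequality by restricting $f$ to the segment. First I would parametrize $\ell(z,z')$ by $\gamma(t) = (1-t)z + t z'$, $t \in [0,1]$, identifying $\cx$ with $\real^2$ so that $\gamma$ is an affine path whose velocity $\gamma'(t) = z' - z$ is constant, with Euclidean norm equal to $|z'-z|$. Since $f$ is $C^1$ on a neighborhood of the compact segment $\ell(z,z')$ and $\gamma$ is smooth with image contained in that segment, the composite $g := f \circ \gamma : [0,1] \to \real$ is well defined and $C^1$ on all of $[0,1]$.

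Next I would apply the chain rule to obtain $g'(t) = \grad f(\gamma(t)) \cdot \gamma'(t)$, the Euclidean inner product of the gradient with the (constant) velocity. By the Cauchy--Schwarz inequality together with the hypothesis $|\grad f| \le M_0$, this gives the pointwise bound $|g'(t)| \le |\grad f(\gamma(t))|\,|z'-z| \le M_0 |z'-z|$ for every $t \in [0,1]$.

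Finally, by the fundamental theorem of calculus applied to $g$, we have $f(z') - f(z) = g(1) - g(0) = \int_0^1 g'(t)\,dt$, and taking absolute values yields $|f(z') - f(z)| \le \int_0^1 |g'(t)|\,dt \le M_0 |z'-z|$, as claimed. I do not expect a genuine obstacle here: the only points needing (routine) attention are that the neighborhood on which $f$ is defined contains the entire segment, so that $g$ is $C^1$ on all of $[0,1]$ --- which is exactly the hypothesis, using compactness of $\ell(z,z')$ --- and the bookkeeping identification of the complex increment $z'-z$ with the real vector $(\re(z'-z), \im(z'-z))$, whose Euclidean norm equals the modulus $|z'-z|$, so that the hypothesis on $|\grad f|$ (the Euclidean gradient norm) pairs correctly with $\gamma'(t)$ in the Cauchy--Schwarz step.
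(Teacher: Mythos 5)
Your proof is correct and follows essentially the same route as the paper: the paper also parametrizes the segment by $z(t)=(1-t)z+tz'$ and bounds $|f(z')-f(z)|$ by $\int_0^1 |\grad f|\,|z'-z|\,dt \le M_0|z'-z|$ via the fundamental theorem of calculus and Cauchy--Schwarz. Your write-up simply makes explicit the chain-rule and bookkeeping steps that the paper leaves implicit.
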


\begin{proof}
Let $z(t) = (1-t)z + t z'$, $0 \le t \le 1$.  Then we obtain
\[
 |f(z') - f(z)|
 \le \int_0^1 |\grad f| \; |z'-z|\;dt
 \le M_0 |z'-z|.
\]
\end{proof}

Let $w_0, w_1, w_2 \in \cx$ be distinct.  Assume that
$\im(\frac{w_2-w_0}{w_1-w_0})>0$.  The circumcenter $w_c$ of the
triangle $\triangle(w_0, w_1, w_2)$ is written as
\[
 w_c 
  = \frac{w_1+w_2}{2} + \frac{\rmi(w_2-w_1)}{2} f(w_0)
\]
where we denote by
\[ f(w_0)
 := \cot\angle(w_2,w_0,w_1)
 = \frac{\re((\overline{w_1-w_0})(w_2-w_0))}{\im((\overline{w_1-w_0})(w_2-w_0))} 
\]
as a function of $w_0$.
A direct calculation shows that
\[ |\grad f| =
\frac{|(w_2-w_1)(w_1-w_0)(w_2-w_0)|}{(\im((\overline{w_1-w_0})(w_2-w_0)))^2},
\]
which is a symmetric function of $w_0, w_1, w_2$.

Let
\[ \fdom_0 = \set{\ww = \uu + \vv\rmi :
  0 \le \uu \le \half, \vv \ge \frac{\sqrt{3}}{2}, |\ww|\ge1}.
\]
Let $U(z, \epsilon) = \set{\zeta \in\cx : |\zeta - z| < \epsilon}$ be
a neighborhood of $z \in \cx$.

\begin{lemma}
\label{52}
Let $\ww = \uu + \vv\rmi \in \fdom_0$.
Let $w_0 \in U(0,\epsilon)$,
$w_1 \in U(1,\epsilon)$,
$w_2 \in U(\ww, \epsilon)$,
and suppose that $\epsilon \le \frac{1}{50}$.
Then we have
\begin{align*}
  |w_1-w_0| &\le 1+2\epsilon , \\
  |w_2-w_0| &\le \frac{2\vv}{\sqrt{3}}(1+2\epsilon) , \\
  |w_2-w_1| &\le \frac{\vv}{\sqrt{3}}(\sqrt{6}+4\epsilon)  , 
\end{align*}
and
\[
 \frac{|(w_2-w_1)(w_1-w_0)(w_2-w_0)|}{(\im((\overline{w_1-w_0})(w_2-w_0)))^2}
  \le \frac{7}{3} .
\]
\end{lemma}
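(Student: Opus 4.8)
The plan is to estimate the three side lengths of the triangle $\triangle(w_0,w_1,w_2)$ from above, to estimate the denominator $\im(\overline{(w_1-w_0)}(w_2-w_0))$ — which is twice the signed area of the triangle — from below, and then to reduce the asserted quotient bound to an elementary inequality in the two parameters $\vv$ and $\epsilon$.

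First I would record two geometric facts about $\fdom_0$. Since $\vv \ge \frac{\sqrt3}{2}$ we have $\uu \le \half \le \frac{\vv}{\sqrt3}$, hence $|\ww|^2 = \uu^2+\vv^2 \le \frac{4}{3}\vv^2$, i.e.\ $|\ww| \le \frac{2\vv}{\sqrt3}$. Moreover, minimizing $\uu+\vv$ over $\fdom_0$ — here the constraint $|\ww|\ge1$ is the binding one, and the minimum value $1$ is attained at $(\uu,\vv)=(0,1)$ — gives $\uu+\vv\ge1$, so $(1-\uu)^2 \le \vv^2$ and $|\ww-1|^2 = (1-\uu)^2 + \vv^2 \le 2\vv^2$, i.e.\ $|\ww-1| \le \sqrt2\,\vv$. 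The three distance bounds then follow from the triangle inequality via $|w_1-w_0|\le 1 + |w_1-1| + |w_0|$, and similarly for the others: each unperturbed length ($1$, $|\ww|\le \frac{2\vv}{\sqrt3}$, and $|\ww-1|\le\sqrt2\,\vv = \frac{\sqrt6}{\sqrt3}\vv$) picks up an additive $2\epsilon$, and since $\vv\ge\frac{\sqrt3}{2}$ one has $2\epsilon \le \frac{4\vv}{\sqrt3}\epsilon$, which converts the additive $2\epsilon$ into the stated multiplicative factors $(1+2\epsilon)$ and $(\sqrt6+4\epsilon)$.

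For the quotient, the numerator is bounded above by the product of the three estimates, namely $\frac{2\vv^2}{3}(1+2\epsilon)^2(\sqrt6+4\epsilon)$. For the denominator I would write $w_1-w_0 = 1+s$ and $w_2-w_0 = \ww+t$ with $|s|,|t|\le 2\epsilon$, expand
\[
 \im\big(\overline{(1+s)}(\ww+t)\big) = \vv + \im(t) + \im(\bar s\,\ww) + \im(\bar s\,t),
\]
and bound the three error terms by $2\epsilon$, by $|s|\,|\ww| \le \frac{4\vv}{\sqrt3}\epsilon$, and by $4\epsilon^2$ respectively. This yields the lower bound $\im(\overline{(w_1-w_0)}(w_2-w_0)) \ge \vv\big(1-\tfrac{4\epsilon}{\sqrt3}\big) - 2\epsilon(1+2\epsilon)$, which is positive for $\epsilon\le\frac1{50}$, $\vv\ge\frac{\sqrt3}{2}$, so its square is a legitimate lower bound for the denominator.

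It then remains to verify $\frac{2\vv^2}{3}(1+2\epsilon)^2(\sqrt6+4\epsilon) \le \frac73\big(\vv(1-\tfrac{4\epsilon}{\sqrt3}) - 2\epsilon(1+2\epsilon)\big)^2$. Since the left side increases and the right side decreases in $\epsilon$, it suffices to set $\epsilon=\frac1{50}$; the resulting inequality is quadratic in $\vv$, and taking positive square roots turns it into a linear inequality $(\sqrt7\,a - \sqrt{3C})\,\vv \ge \sqrt7\,b$ with the explicit constants $a = 1-\tfrac{4\epsilon}{\sqrt3}$, $b = 2\epsilon(1+2\epsilon)$, $C = \tfrac23(1+2\epsilon)^2(\sqrt6+4\epsilon)$. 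One checks that the slope $\sqrt7\,a - \sqrt{3C}$ is positive, so the worst case is the smallest admissible $\vv=\frac{\sqrt3}{2}$, where a direct arithmetic check confirms the inequality with a small margin. I expect this last step to be the main obstacle: the constant $\frac73$ is close to the value actually attained near $\vv=\frac{\sqrt3}{2}$, $\epsilon=\frac1{50}$, so the additive $2\epsilon$ error in the denominator must be kept as it is rather than crudely absorbed into a multiple of $\vv\epsilon$, which would be too lossy precisely when $\vv$ is near its minimum.
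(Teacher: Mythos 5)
Your argument is correct and follows essentially the same skeleton as the paper's proof: the same triangle-inequality bounds for the three sides (resting on $|\ww|\le\frac{2\vv}{\sqrt{3}}$ and $|\ww-1|\le\sqrt{2}\,\vv$, both consequences of $\uu\le\half$, $\vv\ge\frac{\sqrt{3}}{2}$, $|\ww|\ge1$), a lower bound for $\im\bigl((\overline{w_1-w_0})(w_2-w_0)\bigr)$, and a final arithmetic check at $\epsilon=\frac{1}{50}$. The one place you diverge is the treatment of the denominator. The paper divides the whole expression by $\vv$ and bounds every error term by a multiple of $\epsilon$ alone, using $\frac{1}{\vv}\le\frac{2}{\sqrt{3}}$; this yields $\im\bigl((\overline{w_1-w_0})(w_2-w_0)\bigr)\ge\vv\bigl(1-(2+2\sqrt{3})\epsilon\bigr)$, so the factors of $\vv$ cancel in the quotient and the final check is the one-variable inequality $\frac{2(1+2\epsilon)^2(\sqrt{6}+4\epsilon)}{3(1-(2+2\sqrt{3})\epsilon)^2}\le\frac{7}{3}$ for $\epsilon\le\frac{1}{50}$ (the left side is about $2.30$ at $\epsilon=\frac{1}{50}$). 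You instead keep the additive error $2\epsilon(1+2\epsilon)$, which leaves a genuine two-variable inequality and forces the extra square-root/linearization step to locate the worst case at $(\vv,\epsilon)=(\frac{\sqrt{3}}{2},\frac{1}{50})$ --- valid, but more work. Your closing caution is also misplaced: replacing $2\epsilon$ by $\frac{4\vv\epsilon}{\sqrt{3}}$ is an \emph{equality} at $\vv=\frac{\sqrt{3}}{2}$, so the absorption is lossless exactly where you feared it would be too crude; it loses only for large $\vv$, where the quotient has ample slack anyway. That is why the paper's seemingly cruder, $\vv$-free bound still clears $\frac{7}{3}$, and why the simpler one-variable verification suffices.
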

\begin{proof}
Let $w_0 = \epsilon_1 + \rmi \epsilon_2$,
$w_1 = 1 + \epsilon_3 + \rmi \epsilon_4$,
$w_2 = \uu + \epsilon_5 + \rmi (\vv + \epsilon_6)$.
Then we have
\begin{align*}
 |w_1 - w_0|
 &\le |w_1 - 1| + |1- 0| + |0-w_0| 
 \le 1+2\epsilon, \\
 \frac{|w_2 - w_0|}{\vv}
  &\le \frac{|w_2-w|}{\vv} + \frac{|w-0|}{\vv} + \frac{|0-w_0|}{\vv} 
 \le \frac{2\epsilon}{\sqrt{3}} + \frac{2}{\sqrt{3}}
 + \frac{2\epsilon}{\sqrt{3}}
  = \frac{2}{\sqrt{3}}(1+2\epsilon), \\
 \frac{|w_2 - w_1|}{\vv}
  &\le \frac{|w_2-w|}{\vv} + \frac{|w-1|}{\vv} + \frac{|1-w_1|}{\vv}
  \le   \frac{2\epsilon}{\sqrt{3}} + \sqrt{2} + \frac{2\epsilon}{\sqrt{3}}
  = \frac{\sqrt{6}+4\epsilon}{\sqrt{3}},
\end{align*}
\begin{align*}
  \frac{\im((\overline{w_1-w_0})(w_2-w_0))}{\vv}
  &= \frac{(1+\epsilon_3-\epsilon_1)(\vv+\epsilon_6-\epsilon_2)
    -(\uu+\epsilon_5-\epsilon_1)(\epsilon_4-\epsilon_2)}{\vv} \\
 &\ge (1-2\epsilon) \left(1-\frac{4\epsilon}{\sqrt{3}}\right)
   - \frac{4\epsilon}{\sqrt{3}} \left( \half+2\epsilon \right) \\
 &= 1-(2+2\sqrt{3})\epsilon,
\end{align*}
and
\begin{align*}
\frac{|(w_2-w_1)(w_1-w_0)(w_2-w_0)|}{(\im((\overline{w_1-w_0})(w_2-w_0)))^2} 
&\le \frac{2(1+2\epsilon)^2(\sqrt{6}+4\epsilon)}{3(1-(2+2\sqrt{3})\epsilon)^2}
\le \frac{7}{3}
\end{align*}
for $0<\epsilon < \frac{1}{50}$.
\end{proof}

\begin{lemma}
\label{53}
Let $\ww = \uu + \vv\rmi \in \fdom_0$.  Let $w_0 \in U(0,\epsilon)$, $w_1
\in U(1,\epsilon)$, $w_2 \in U(\ww, \epsilon)$.  Suppose that
$\epsilon \le \frac{1}{45+10\vv}$.  Let $w_c$ be the circumcenter of
$\triangle(w_0,w_1,w_2)$.  Then we have $\frac{1}{4} \le \re(w_c) \le
\frac{3}{4}$, $\frac{\vv}{4} \le \im(w_c) \le \frac{3\vv}{4}$.
\end{lemma}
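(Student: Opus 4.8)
The plan is to compare $w_c$ with the circumcenter $w_c^{(0)}$ of the unperturbed triangle $\triangle(0,1,\ww)$ and to control the displacement $w_c - w_c^{(0)}$ by the size $\epsilon$ of the perturbation. First I would compute $w_c^{(0)}$ explicitly. Since it lies on the perpendicular bisector of $\ell(0,1)$, its real part is exactly $\half$, and equating $|w_c^{(0)}| = |w_c^{(0)} - \ww|$ gives
\[
 w_c^{(0)} = \half + \frac{\vv^2 + \uu^2 - \uu}{2\vv}\rmi .
\]
Because $0 \le \uu \le \half$ we have $\uu^2 - \uu \in [-\tfrac14, 0]$, so $\re(w_c^{(0)}) = \half$ and $\frac{\vv}{2} - \frac{1}{8\vv} \le \im(w_c^{(0)}) \le \frac{\vv}{2}$. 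Thus the unperturbed center already sits comfortably inside the target box, with margin $\frac14$ in the real direction and margin roughly $\frac{\vv}{4}$ in the imaginary direction (using $\vv \ge \frac{\sqrt3}{2}$).

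Next I would bound $w_c - w_c^{(0)}$ by moving one vertex at a time, from $(0,1,\ww)$ to $(w_0,w_1,w_2)$. Each single-vertex move is controlled by the circumcenter representation that isolates that vertex: for $w_0$ one uses $w_c = \frac{w_1+w_2}{2} + \frac{\rmi(w_2-w_1)}{2}f(w_0)$, and the two analogous symmetric representations isolate $w_1$ and $w_2$. In each case the displacement equals $\frac{\rmi(\text{opposite edge})}{2}$ times the real increment of the relevant cotangent, whose absolute value is at most $\frac{7}{3}\epsilon$ by Lemma~\ref{51} together with the gradient bound $|\grad f| \le \frac73$ of Lemma~\ref{52} (valid along each path since the three vertices stay in their $\epsilon$-neighbourhoods, and $\epsilon \le \frac{1}{45+10\vv} < \frac{1}{50}$ because $\vv \ge \frac{\sqrt3}{2}$). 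The crucial structural point is that moving a vertex displaces $w_c$ perpendicularly to the opposite edge.

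The final and decisive step is the anisotropic real/imaginary bookkeeping, which I expect to be the main obstacle: the isotropic estimate $|w_c - w_c^{(0)}| \le \frac{7\epsilon}{6}\bigl(|w_2-w_1| + |w_2-w_0| + |w_1-w_0|\bigr)$ tends to $\approx 0.3$ as $\vv \to \infty$ and would overshoot the real-direction margin $\frac14$. To fix this I would observe that the base edge $w_1-w_0$ is nearly horizontal, so perturbing $w_2$ (whose displacement is perpendicular to $w_1-w_0$) changes $\im(w_c)$ but is negligible for $\re(w_c)$; hence only the moves of $w_0$ and $w_1$ feed the real part, each through $\re\bigl(\rmi(w_2 - w_\bullet)\bigr) = -\im(w_2 - w_\bullet) = -(\vv + O(\epsilon))$. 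This yields $|\re(w_c) - \half| \le \frac{7}{3}(\vv + O(\epsilon))\epsilon$, which is $<\frac14$ precisely because $\epsilon \le \frac{1}{45+10\vv}$ forces $\frac{7}{3}\vv\epsilon \to \frac{7}{30} < \frac14$; the denominator $45+10\vv$ is tuned exactly for this.

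For the imaginary coordinate the roles reverse: every contribution carries a factor $\re(w_2 - w_\bullet)$ or $\re(w_1 - w_0)$, all bounded by $1 + O(\epsilon)$, so $|\im(w_c) - \im(w_c^{(0)})|$ is a small absolute constant independent of $\vv$. Combined with $\im(w_c^{(0)}) \in [\frac{\vv}{2} - \frac{1}{8\vv}, \frac{\vv}{2}]$ and $\vv \ge \frac{\sqrt3}{2}$, this keeps $\im(w_c)$ inside $[\frac{\vv}{4}, \frac{3\vv}{4}]$, the lower bound being the tightest case and holding by the choice of $\epsilon$. Putting the two coordinate estimates together gives $\frac14 \le \re(w_c) \le \frac34$ and $\frac{\vv}{4} \le \im(w_c) \le \frac{3\vv}{4}$, as claimed.
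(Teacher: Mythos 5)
Your proposal is correct, and its skeleton matches the paper's proof: both compute the unperturbed circumcenter $w_{c,0}=\half+\frac{\uu^2-\uu+\vv^2}{2\vv}\rmi$, pass from $(0,1,\ww)$ to $(w_0,w_1,w_2)$ one vertex at a time, and control each move via Lemma~\ref{51} and the gradient bound $\frac{7}{3}$ of Lemma~\ref{52}. The genuine difference is the final bookkeeping. The paper bounds the total displacement isotropically,
\[
|w_c-w_{c,0}|\le\frac{7\epsilon}{6}\left(1+\frac{2\vv(1+\epsilon)}{\sqrt{3}}+\frac{\vv(\sqrt{6}+4\epsilon)}{\sqrt{3}}\right),
\]
and asserts that this is at most $\min\Set{\frac{\vv}{12},\frac{1}{4}}$ for $\epsilon\le\frac{1}{45+10\vv}$. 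You instead track real and imaginary parts separately, exploiting the perpendicularity of each displacement to the opposite edge: the real part is fed (up to $O(\epsilon^2)$) only by the moves of $w_0$ and $w_1$, through near-vertical edges of height $\vv+O(\epsilon)$, giving $|\re(w_c)-\half|\le\frac{7}{3}\vv\epsilon+O(\epsilon^2)\le\frac{7}{30}+O(\epsilon^2)<\frac{1}{4}$; the imaginary part picks up only horizontal edge components of size at most $1+O(\epsilon)$, giving a deviation of about $\frac{35}{12}\epsilon$, which sits below the available margin $\frac{\vv}{4}-\frac{1}{8\vv}$ (the deviation is decreasing and the margin increasing in $\vv$, so the check at $\vv=\frac{\sqrt{3}}{2}$ suffices).

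This anisotropic refinement is not merely stylistic; it repairs an actual gap. The last inequality in the paper's proof is false for large $\vv$: taking $\epsilon=\frac{1}{45+10\vv}$, so that $\vv\epsilon\to\frac{1}{10}$, the paper's bound tends to $\frac{7(2+\sqrt{6})}{60\sqrt{3}}\approx 0.30>\frac{1}{4}$ as $\vv\to\infty$, and numerically it already exceeds $\frac{1}{4}$ once $\vv\gtrsim 20$, while $\min\Set{\frac{\vv}{12},\frac{1}{4}}=\frac{1}{4}$ there. This regime matters, since in Section 5 the lemma is invoked with $\vv\le M_2=1+\half M_1$, where $M_1$ (the bound on the partial quotients of $\theta/2\pi$) can be arbitrarily large. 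The lemma itself remains true, and your component-wise argument proves it; your proposal should therefore be regarded as a correction of the published proof rather than just an alternative to it.
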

\begin{proof}
The circumcenter $w_{c,0}$ of $\triangle(0,1,\ww)$ is written as
\[ w_{c,0} = \half + \frac{\uu^2-\uu+\vv^2}{2\vv}\rmi,
\]
where we have
\[ \frac{\vv}{3} \le \frac{\vv}{2(1+\uu)} \le \frac{\uu^2-\uu+\vv^2}{2\vv} \le \frac{\vv}{2}
\]
for $\ww \in D$.
Let $w_{c,1}$ be the circumcenter of $\triangle(0,w_1,w_2)$,
and $w_{c,2}$ the circumcenter of $\triangle(0,1,w_2)$.
By Lemmas \ref{51} and \ref{52}, we have
\begin{align*}
|w_{c,2} - w_{c,0}| &\le \frac{|1-0|}{2} \cdot \frac{7}{3} |w_2 - \ww|
 \le \frac{7\epsilon}{6} , \\
|w_{c,1} - w_{c,2}| &\le \frac{|w_2-0|}{2} \cdot \frac{7}{3} |w_1-1| 
 \le  \frac{2\vv}{\sqrt{3}} (1+\epsilon)\cdot \frac{7\epsilon}{6} , \\
|w_c - w_{c,1}| &\le \frac{|w_2-w_1|}{2} \cdot \frac{7}{3} |w_0 - 0|
 \le  \frac{\vv}{\sqrt{3}} (\sqrt{6}+4\epsilon) \cdot \frac{7\epsilon}{6}  ,
\end{align*}
which imply that
\[
|w_c - w_{c,0}| 
\le \frac{7\epsilon}{6}
   \left(1+\frac{2\vv(1+\epsilon)}{\sqrt{3}}
       + \frac{v(\sqrt{6}+4\epsilon)}{\sqrt{3}} \right) 
\le \min\Set{ \frac{\vv}{12}, \frac{1}{4} }
\]
for $\epsilon < \frac{1}{45+10v}$.
This completes the proof.
\end{proof}

A mapping $\varphi : \cx \to \cx$ is called an $\epsilon$-perturbation
if it is a homeomorphisms and $|\varphi(\zeta) -\zeta| < \epsilon$ for
any $\zeta \in \cx$.

\begin{lemma}
\label{54}
Let $\ww = \uu + \vv\rmi \in \fdom_0$.  Let $\Lambda = \zahl + \ww \zahl$ be
a linear lattice.  Let $\varphi : \cx \to \cx$ be an
$\epsilon$-perturbation, where we suppose that $\epsilon \le
\frac{1}{45+10\vv}$.  Then, for any $\lambda \in \Lambda \setminus
\set{0,1,\ww, 1+\ww}$, $\varphi(\lambda)$ is out of the circumdisk $K$
of $\triangle(\varphi(0), \varphi(1), \varphi(\ww))$.
\end{lemma}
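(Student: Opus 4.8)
\emph{Proof plan.} The plan is to compare, for each $\lambda \in \Lambda\setminus\set{0,1,\ww,1+\ww}$, the distance from $\varphi(\lambda)$ to the circumcenter $w_c$ of $\triangle(\varphi(0),\varphi(1),\varphi(\ww))$ with the circumradius of that triangle. Since $\varphi(0),\varphi(1),\varphi(\ww)$ all lie on $\partial K$, the point $\varphi(\lambda)$ lies outside $K$ exactly when $|\varphi(\lambda)-w_c|>|\varphi(q)-w_c|$, where I take $q\in\set{0,1,\ww}$ to be the vertex of the unperturbed triangle \emph{nearest} to $\lambda$. Using the identity $|X-w_c|^2-|Y-w_c|^2 = \re\!\big(\overline{(X-Y)}\,(X+Y-2w_c)\big)$ with $X=\varphi(\lambda)$, $Y=\varphi(q)$, it suffices to prove $\re\!\big(\overline{(\varphi(\lambda)-\varphi(q))}(\varphi(\lambda)+\varphi(q)-2w_c)\big)>0$.

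First I would identify the leading (unperturbed) term. Let $w_{c,0}=\half+\frac{\uu^2-\uu+\vv^2}{2\vv}\rmi$ be the circumcenter of $\triangle(0,1,\ww)$ and $R=|w_{c,0}|$ its circumradius. For any vertex $q$ on the circumcircle one has $\re(\overline{(\lambda-q)}(\lambda+q-2w_{c,0}))=|\lambda-w_{c,0}|^2-R^2$, independent of $q$, and writing $\lambda=m+n\ww$ a direct computation gives
\[
 g(m,n):=|\lambda-w_{c,0}|^2-R^2 = m(m-1)+2mn\uu+n(n-1)|\ww|^2 .
\]
The quantitative heart of the matter is that $g(m,n)\ge 1$ for every $(m,n)\notin\set{(0,0),(1,0),(0,1),(1,1)}$; this is the empty-circumdisk property of the reduced triangle, with a margin. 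I would prove it by a case analysis on $n$: for $|n|\le 1$ by direct evaluation (the minima are attained at $(m,n)=(-1,1)$ and $(1,-1)$, each giving $g\ge1$ because $\uu\le\half$ and $|\ww|\ge1$), and for $|n|\ge 2$ by completing the square in $m$, where $\vv\ge\frac{\sqrt3}{2}$ forces $\min_m g(m,n)>1$.

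Next I would expand the perturbed inner product. With $\varphi(\lambda)-\varphi(q)=(\lambda-q)+\rho_1$ and $\varphi(\lambda)+\varphi(q)-2w_c=(\lambda+q-2w_{c,0})+\rho_2$, where $|\rho_1|\le2\epsilon$ and $|\rho_2|\le2\epsilon+2|w_c-w_{c,0}|$, the inner product is bounded below by
\[
 g(m,n) - |\lambda-q|\,(2\epsilon+2|w_c-w_{c,0}|) - 2\epsilon\,|\lambda+q-2w_{c,0}| - 2\epsilon\,(2\epsilon+2|w_c-w_{c,0}|),
\]
and the task is to show this is positive, where Lemma~\ref{53} provides $|w_c-w_{c,0}|\le\min\set{\vv/12,\tfrac14}$.

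The hard part is the tall-lattice regime $\vv\to\infty$. There $R\sim\vv$, and the extremal points (for instance $\lambda=-1+\ww$) lie only $O(1/\vv)$ outside the circle, so $g$ can be as small as $1$, while the circumcenter may shift by as much as $\tfrac14$, which does \emph{not} tend to $0$. The decisive device is the choice of $q$ as the nearest vertex: it forces $|\lambda-q|=O(1)$, so the dangerous term $|\lambda-q|\cdot2|w_c-w_{c,0}|$ stays $O(1)$, whereas the unavoidably large factor $|\lambda+q-2w_{c,0}|=O(\vv)$ is multiplied only by $\epsilon\le\frac{1}{45+10\vv}=O(1/\vv)$. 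One then verifies that the hypothesis $\epsilon\le\frac{1}{45+10\vv}$ is calibrated so that the whole error stays below $g\ge1$, uniformly in $\vv$. The one remaining point to secure is that whenever $g(m,n)$ is near its minimum the point $\lambda$ really is within bounded distance of a vertex (so that $|\lambda-q|=O(1)$ is genuinely available); this holds because the circumcircle is narrow at each lattice row $\im(\lambda)=n\vv$, so that near-circle lattice points cluster around $0,1,\ww,1+\ww$. This $\vv$-uniform constant chasing is the technical crux.
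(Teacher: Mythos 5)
Your plan has the same skeleton as the paper's proof: for each excluded $\lambda=j+k\ww$ you compare $|\varphi(\lambda)-w_c|^2$ against $r^2=|\varphi(q)-w_c|^2$ for a suitably chosen vertex $q\in\set{0,1,\ww}$, and expand the difference of squares; the paper's five cases ($j\ge2,k=0$; $j\le-1,k=0$; $k=1$; $k\ge2$; $k\le-1$) are exactly this identity written out in coordinates, with essentially your choices of $q$. Your formula $g(m,n)=m(m-1)+2mn\uu+n(n-1)|\ww|^2$ and the bound $g\ge1$ are correct, and isolating them is a clean way to organize the estimate. The structural difference is that the paper never introduces the unperturbed circumcenter $w_{c,0}$: it factors the perturbed expression so that each case closes using only the box bounds $\frac14\le\re(w_c)\le\frac34$, $\frac{\vv}{4}\le\im(w_c)\le\frac{3\vv}{4}$ of Lemma~\ref{53} (the coordinate $\im(w_c)\sim\vv$ is always paired either with an $O(\epsilon)$ difference of perturbations or with another manifestly positive factor), whereas you route all the error through the displacement $\Delta:=|w_c-w_{c,0}|$ multiplied by $|\lambda-q|$.

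That routing is where the gap lies, in two respects. First, the bound $\Delta\le\min\set{\vv/12,\tfrac14}$ that you attribute to Lemma~\ref{53} is not what that lemma states; its statement gives only the box above, which allows $|\im(w_c)-\im(w_{c,0})|$ up to $\tfrac{5\vv}{12}$, and with that symmetric bound your budget actually fails at an extremal configuration: for $\ww=\tfrac12+\tfrac{\sqrt3}{2}\rmi$ and $\lambda=1-\ww$ one has $g=1$, $|\lambda-q|=1$, and the term $2|\lambda-q|\cdot\tfrac{5\vv}{12}$-type contributions push the total error past $1$. So you must either re-derive the displacement estimate (it appears only inside the paper's proof of Lemma~\ref{53}, via Lemmas~\ref{51}--\ref{52}) or exploit the box asymmetrically together with $\tfrac{\vv}{3}\le\im(w_{c,0})\le\tfrac{\vv}{2}$ --- either way, work your plan does not contain. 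Second, the closing step cannot be ``the whole error stays below $g\ge1$'': your error term $|\lambda-q|(2\epsilon+2\Delta)$ is unbounded over $\Lambda$, and the nearest-vertex claim $|\lambda-q|=O(1)$ is false in general (for $\lambda=j$ it is $j-1$; for $\lambda=2\ww$ it is $|\ww|\sim\vv$). What must be verified is error $<g$ regime by regime, with quadratic growth of $g$ beating linear growth of the error, uniformly in $\vv$. That verification is not a routine afterthought --- it is precisely the content of the paper's five-case computation --- and your proposal asserts it rather than performs it.
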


\begin{proof}
Denote by $\varphi(0) = \epsilon_1 + \epsilon_2 \rmi$, $\varphi(1) = 1
+ \epsilon_3 + \epsilon_4 \rmi$, $\varphi(\ww) = \uu + \vv\rmi +
\epsilon_5 + \epsilon_6 \rmi$.  Let $w_c = \sigma + \rho\rmi$ be the
circumcenter of $\triangle(\varphi(0), \varphi(1), \varphi(\ww))$.  By
Lemma~\ref{53}, we have $\frac{1}{4} \le \sigma \le \frac{3}{4}$,
$\frac{\vv}{4} \le \rho \le \frac{3\vv}{4}$.  Let $r$ be the radius
of $K$, where we have
\[ r^2 = (\epsilon_1 - \sigma)^2 + (\epsilon_2 - \rho)^2
 = (1+\epsilon_3 - \sigma)^2 + (\epsilon_4 - \rho)^2
 = (\uu + \epsilon_5 - \sigma)^2 + (\vv + \epsilon_6 - \rho)^2.
\]

Let $(j,k) \in \zahl^2 \setminus \set{(0,0), (1,0), (0,1), (1,1)}$.
We are going to show that $|\varphi(j+k\ww) - w_c| > r$.  Denote by
$\varphi(j+k\ww) = j + k\ww + \epsilon_7 + \epsilon_8\rmi$, where $\epsilon_7^2 +
\epsilon_8^2 < \epsilon^2$.

Case (i).  If $j\ge2$ and $k=0$, then
\begin{align*}
|\varphi(j) - w_c|^2 - r^2
&= (j+\epsilon_7 - \sigma)^2 + (\epsilon_8 - \rho)^2
- (1 + \epsilon_3 - \sigma)^2 - (\epsilon_4 - \rho)^2 \\
&= (1+j -2\sigma+\epsilon_7+\epsilon_3)(j-1+\epsilon_7-\epsilon_3)
  + (-2\rho + \epsilon_8 + \epsilon_4)(\epsilon_8-\epsilon_4) \\
 &\ge (\frac{3}{2} - 2\epsilon)(1-2\epsilon)
  - (\frac{3\vv}{2}+2\epsilon) 2\epsilon
> 0.
\end{align*}

Case (ii).  If $j\le-1$ and $k=0$, then
\begin{align*}
|\varphi(j) - w_c|^2 - r^2
&= (j+\epsilon_7 - \sigma)^2 + (\epsilon_8 - \rho)^2
- (\epsilon_1 - \sigma)^2 - (\epsilon_2 - \rho)^2 \\
&= (j -2\sigma+\epsilon_7+\epsilon_1)(j+\epsilon_7-\epsilon_1)
  + (-2\rho + \epsilon_8 + \epsilon_2)(\epsilon_8-\epsilon_2) \\
  &\ge (\frac{3}{2} - 2\epsilon)(1-2\epsilon)
  - (\frac{3\vv}{2}+2\epsilon) 2\epsilon
> 0.
\end{align*}

Case (iii).  If $j\in\zahl\setminus\set{0,1}$ and $k=1$, then
\begin{align*}
&|\varphi(j+\ww) - w_c|^2 - r^2 \\
&= (j+\uu + \epsilon_7 - \sigma)^2 + (\vv+\epsilon_8 - \rho)^2
  - (\uu + \epsilon_5 - \sigma)^2 - (\vv+\epsilon_6 - \rho)^2 \\
&= (j +2\uu-2\sigma+\epsilon_7+\epsilon_5)(j+\epsilon_7-\epsilon_5)
  + (2\vv-2\rho + \epsilon_8 + \epsilon_6)(\epsilon_8-\epsilon_6) \\
&\ge (\frac{1}{2} - 2\epsilon)(2-2\epsilon)
  - (\frac{3\vv}{2}+2\epsilon) 2\epsilon
> 0.
\end{align*}

Case (iv). If $k\ge2$, then
\begin{align*}
& |\varphi(j+k\ww) - w_c|^2 - r^2 \\
&= (j+k\uu + \epsilon_7 - \sigma)^2 + (k\vv+\epsilon_8 - \rho)^2
    - (\uu + \epsilon_5 - \sigma)^2 - (\vv+\epsilon_6 - \rho)^2 \\
&= (j +(k+1)\uu-2\sigma+\epsilon_7+\epsilon_5)(j+(k-1)\uu+\epsilon_7-\epsilon_5) \\
& \qquad  + ((k+1)\vv-2\rho + \epsilon_8 + \epsilon_6)((k-1)\vv+\epsilon_8-\epsilon_6) \\
&\ge - (\uu-\sigma+\epsilon_5)^2 + (\frac{3\vv}{2} - 2\epsilon)(\vv-2\epsilon) \\
&\ge - (\frac{3}{4}+\epsilon)^2 + (\frac{3\vv}{2} - 2\epsilon)(\vv-2\epsilon) 
> 0,
\end{align*}
where we used the inequality $(t-\mu)(t-\nu) \ge - \frac{(\mu-\nu)^2}{4}$.

Case (v).  If $k\le-1$, then
\begin{align*}
& |\varphi(j+k\ww) - w_c|^2 - r^2 \\
&= (j+k\uu+\epsilon_7 - \sigma)^2 + (k\vv+\epsilon_8 - \rho)^2
   - (\epsilon_1 - \sigma)^2 - (\epsilon_2 - \rho)^2 \\
&= (j+k\uu -2\sigma+\epsilon_7+\epsilon_1)(j+k\uu+\epsilon_7-\epsilon_1)
  + (k\vv-2\rho + \epsilon_8 + \epsilon_2)(k\vv+\epsilon_8-\epsilon_2) \\
&\ge - (\sigma - \epsilon)^2 + (\frac{3\vv}{2}-2\epsilon)(\vv- 2\epsilon) \\
&\ge - (\frac{3}{4} - \epsilon)^2 + (\frac{3\vv}{2}-2\epsilon)(\vv- 2\epsilon) > 0.
\end{align*}
\end{proof}

\begin{lemma}
Let $\ww = \uu + \vv\rmi \in \fdom_0$. 
Let $\Lambda = \zahl + \ww \zahl$ be a linear lattice.  Let $\varphi :
\cx \to \cx$ be an $\epsilon$-perturbation, where we suppose that
$\epsilon \le \frac{1}{45+10\vv}$.  If the Voronoi cell $V(\varphi(0),
\varphi(\Lambda))$ is adjacent to $V(\varphi(j+k\ww),
\varphi(\Lambda))$, then $|j|\le 1$ and $|k| \le 1$.
\end{lemma}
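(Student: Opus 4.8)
The plan is to pass to the dual Delaunay picture and to show that the points of $\varphi(\Lambda)$ whose cells are edge-adjacent to $V(\varphi(0))$ are exactly the images of $\pm1,\pm\ww,\pm(\ww-1),\pm(1+\ww)$, each of which has the form $j+k\ww$ with $|j|,|k|\le1$. I will use the standard Voronoi--Delaunay duality (the Empty Circumdisk characterization, as in Lemma~\ref{33}, which is valid for an arbitrary discrete set and not only for a linear lattice): $V(\varphi(0))$ and $V(\varphi(\lambda))$ are edge-adjacent if and only if $\ell(\varphi(0),\varphi(\lambda))$ is a Delaunay edge, i.e.\ there is an open disk carrying $\varphi(0),\varphi(\lambda)$ on its boundary and no point of $\varphi(\Lambda)$ in its interior. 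Thus it suffices to identify the Delaunay edges issuing from $\varphi(0)$.

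First I would promote Lemma~\ref{54} from the single triangle $\triangle(\varphi(0),\varphi(1),\varphi(\ww))$ to all the lattice triangles incident to $0$. The lattice $\Lambda=\zahl+\ww\zahl$ is invariant under the isometries $\zeta\mapsto\zeta+c$ and $\zeta\mapsto c-\zeta$ for $c\in\Lambda$, and for any such isometry $T$ the conjugate $T\circ\varphi\circ T^{-1}$ is again an $\epsilon$-perturbation. Applying Lemma~\ref{54} to these conjugates and transporting the conclusion back by $T$ shows that, for every $c\in\Lambda$, the circumdisks of $\triangle(\varphi(-c),\varphi(1-c),\varphi(\ww-c))$ and of $\triangle(\varphi(c),\varphi(c-1),\varphi(c-\ww))$ contain no point of $\varphi(\Lambda)$ except their three vertices and, possibly, one further vertex of the corresponding fundamental parallelogram, which in each case is again one of $\pm1,\pm\ww,\pm(\ww-1),\pm(1+\ww)$. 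Taking $c\in\{0,1,\ww\}$ produces the six triangles $T_1=\triangle(\varphi(0),\varphi(1),\varphi(\ww))$, $T_2=\triangle(\varphi(0),\varphi(\ww),\varphi(\ww-1))$, $\dots$, $T_6=\triangle(\varphi(0),\varphi(1-\ww),\varphi(1))$ around $\varphi(0)$, together with the two outer triangles completing the four fundamental parallelograms $P_1,\dots,P_4$ that meet at $0$; these eight triangles (two per parallelogram) all have empty circumdisks.

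Next I would argue that these eight triangles tile a neighborhood of $\varphi(0)$ and therefore constitute its entire Delaunay star. In the unperturbed lattice $T_1,\dots,T_6$ fan around $0$ with edge directions in a fixed cyclic order and with angular sum $2\pi$; since $\varphi$ displaces each vertex by less than $\epsilon$, Lemmas~\ref{52} and~\ref{53} guarantee that for $\epsilon\le\frac{1}{45+10\vv}$ the perturbed triangles stay nondegenerate and keep the same cyclic order, so $\varphi(P_1\cup\cdots\cup P_4)$ is a closed topological disk containing $\varphi(0)$ in its interior and triangulated by these eight triangles. As each has an empty circumdisk, each side is a Delaunay edge. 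Delaunay edges do not cross, and $\varphi(\Lambda)$ meets $\varphi(P_1\cup\cdots\cup P_4)$ only in the nine points $\varphi(\{0,\pm1,\pm\ww,\pm(\ww-1),\pm(1+\ww)\})$; hence a Delaunay edge from $\varphi(0)$ cannot leave this neighborhood without crossing a bounding Delaunay edge, and so it must terminate at one of the eight surrounding points. Each such point is $j+k\ww$ with $|j|,|k|\le1$, which is the assertion.

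The main obstacle is the degenerate, near-rectangular regime. Lemma~\ref{54} deliberately leaves open whether $\varphi(1+\ww)$ lies inside the circumdisk of $T_1$: for $\ww\in\fdom_0$ with small $\uu$ the four points $\varphi(0),\varphi(1),\varphi(\ww),\varphi(1+\ww)$ are nearly cocircular, and the admissible bound on $\epsilon$ does not force $\uu$ away from $0$. When this occurs a fundamental parallelogram is triangulated by its other diagonal, so the combinatorics of the star of $\varphi(0)$ changes by a diagonal flip, and the ``these triangles tile / screening'' step must be run in the presence of such flips. The point to verify is that every flip only exchanges edges among the same nine vertices: a flip in $P_1$ (resp.\ $P_3$) inserts the edge to $\varphi(1+\ww)$ (resp.\ $\varphi(-(1+\ww))$), while a flip in $P_2$ or $P_4$ merely deletes an edge at $\varphi(0)$; in every case the neighbors of $\varphi(0)$ remain among $\pm1,\pm\ww,\pm(\ww-1),\pm(1+\ww)$, so the conclusion $|j|\le1,\ |k|\le1$ is insensitive to the degeneracy. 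Certifying that the bounding edges are still Delaunay when a circumcircle is nearly degenerate is the delicate part, and it is settled by the observation that, for $\ww\in\fdom_0$, the remaining lattice points are separated from the relevant circles by a distance that exceeds the perturbation scale $\epsilon$.
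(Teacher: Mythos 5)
Your proposal starts out parallel to the paper's proof (pass to the Delaunay/empty-disk picture via Lemma~\ref{33}, promote Lemma~\ref{54} to the other triangles around $\varphi(0)$ by conjugating $\varphi$ with the lattice isometries $\zeta\mapsto\pm\zeta+c$), but your screening mechanism has a genuine gap, and it is exactly the one you flag at the end without resolving. Your argument needs the triangles incident to the four parallelograms to have \emph{empty} circumdisks, so that their sides are Delaunay edges forming a cycle around $\varphi(0)$ that no other Delaunay edge can cross. Lemma~\ref{54} does not give emptiness: it allows the fourth corner of each parallelogram to lie inside the corresponding circumdisk, and this really can happen for $\ww\in\fdom_0$ with $\uu$ of order $\epsilon$. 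Your proposed repair via diagonal flips cannot be completed with the tools at hand: the flipped triangle $\triangle(\varphi(0),\varphi(1),\varphi(1+\ww))$ is \emph{not} a lattice-isometric copy of $\triangle(\varphi(0),\varphi(1),\varphi(\ww))$ (the basis $\{1,1+\ww\}$ is not reduced, since $\re(1+\ww)=1+\uu>\half$), so no conjugate of Lemma~\ref{54} controls its circumdisk, and your closing sentence --- that ``the remaining lattice points are separated from the relevant circles by a distance that exceeds the perturbation scale'' --- is precisely the quantitative statement that would have to be proved; nothing in Lemmas~\ref{52}--\ref{54} yields it. To make your route work one would need a new, quadrilateral analogue of Lemmas~\ref{53}--\ref{54} for the four nearly cocircular points.

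The paper's proof shows the whole difficulty is avoidable, because it never claims any triangle is Delaunay. It forms the union $K$ of the six closed circumdisks around $\varphi(0)$ (display (\ref{55}) or (\ref{56}), according to the sign of $\uu$), observes that $K$ is a closed neighborhood of $\varphi(0)$, and that by Lemma~\ref{54} every site $\varphi(j+k\ww)$ with $|j|\ge2$ or $|k|\ge2$ lies outside $K$. Non-adjacency follows directly: if there were a closed disk $B$ with $B\cap\varphi(\Lambda)=\set{\varphi(0),\varphi(j+k\ww)}$, the segment joining the two sites would lie in $B$ and would cross the far side $\ell(\varphi(\lambda),\varphi(\lambda'))$ of one of the six triangles; comparing the difference of powers of a point with respect to $\partial B$ and that triangle's circumcircle --- an affine function which vanishes at $\varphi(0)$, is negative at $\varphi(j+k\ww)$, and is positive at $\varphi(\lambda),\varphi(\lambda')$ --- gives a contradiction at the crossing point, \emph{regardless} of whether the fourth parallelogram corner sits inside that circumdisk. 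This is why Lemma~\ref{54} is deliberately stated to exclude only the sites outside the parallelogram: the near-cocircular flip ambiguity that blocks your argument is simply irrelevant to the paper's.
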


\begin{proof}
For distinct $w_0, w_1, w_2 \in \cx$, 
denote the closed circumdisk of $\triangle(w_0, w_1, w_2)$
by $\KK(w_0,w_1,w_2)$.
Let $(j,k) \in \zahl^2$, and suppose that $|j|\ge2$ or $|k|\ge2$.
If $0 \le \uu \le \half$, the union of the closed disks
\begin{align}
K &:=\KK(\varphi(0),\varphi(1), \varphi(\ww))
 \cup \KK(\varphi(0), \varphi(\ww), \varphi(-1+\ww)) 
 \notag \\
 &\cup \KK(\varphi(0), \varphi(-1+\ww), \varphi(-1))  
  \cup \KK(\varphi(0), \varphi(-1), \varphi(-\ww))
 \notag \\
 &\cup \KK(\varphi(0), \varphi(-\ww), \varphi(1-\ww))
 \cup \KK(\varphi(0), \varphi(1-\ww), \varphi(1))
\label{55}
\end{align}
is a closed neighborhood of $\varphi(0)$.  Lemma~\ref{54} applies to
the six circumdisks in (\ref{55}), and we obtain $\varphi(j+k\ww)
\not\in K$.  This implies that the Voronoi cell $V(\varphi(0),
\varphi(\Lambda))$ is not adjacent to $V(\varphi(j+k\ww),
\varphi(\Lambda))$.

If $-\half \le \uu \le 0$, the union of the closed disks
\begin{align}
K &=\KK(\varphi(0),\varphi(1), \varphi(1+\ww))
 \cup \KK(\varphi(0), \varphi(1+\ww), \varphi(-1)) 
 \notag \\
 &\cup \KK(\varphi(0), \varphi(1), \varphi(-1)) 
 \cup \KK(\varphi(0), \varphi(-1), \varphi(-1-\ww))
 \notag \\
 &\cup \KK(\varphi(0), \varphi(-1-\ww), \varphi(-\ww))
 \cup \KK(\varphi(0), \varphi(-\ww), \varphi(1))
\label{56}
\end{align}
is a closed neighborhood of $\varphi(0)$.  Since $K \not\ni
\varphi(j+k\ww)$, $V(\varphi(0), \varphi(\Lambda))$ is not adjacent to
$V(\varphi(j+k\ww), \varphi(\Lambda))$.
\end{proof}

Let $R_2 > R_1 > \epsilon > 0$.  Let $U \subset \cx$ be a neighborhood
of the origin such that $U(0,R_2) \subset U$.  A mapping $\varphi : U
\to \cx$ is called a local $(\epsilon, R_1, R_2)$-perturbation if it
is a homeomorphism onto its image, and $|\varphi(\zeta) - \zeta| <
\epsilon$ for $|\zeta| \le R_2$, and $|\varphi(\zeta)| \ge R_1$ for
$|\zeta|\ge R_2$.

\begin{lemma}
\label{60}
Let $\ww = \uu + \vv\rmi \in \fdom_0$.
Let $\Lambda = \zahl + \ww \zahl$ be a linear lattice.  Let $\varphi :
\cx \to \cx$ be a local $(\epsilon, 3+3\vv, 4+4\vv)$-perturbation,
where we suppose that $\epsilon \le \frac{1}{45+10\vv}$.  If
$V(\varphi(0), \varphi(\Lambda))$ is adjacent to $V(\varphi(j+k\ww),
\varphi(\Lambda))$, then $|j|\le 1$ and $|k| \le 1$.
\end{lemma}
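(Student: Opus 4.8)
The plan is to follow the proof of the preceding lemma, in which a closed neighborhood $K$ of $\varphi(0)$ is assembled from six circumdisks and every $\varphi(j+k\ww)$ with $|j|\ge2$ or $|k|\ge2$ is shown to lie outside $K$. The one new feature is that $\varphi$ is only a \emph{local} perturbation: the estimate $|\varphi(\zeta)-\zeta|<\epsilon$ holds merely for $|\zeta|\le R_2=4+4\vv$, while for $|\zeta|\ge R_2$ we know only that $|\varphi(\zeta)|\ge R_1=3+3\vv$. Accordingly I would split the lattice points $\lambda=j+k\ww$ with $|j|\ge2$ or $|k|\ge2$ into a \emph{near} regime $|\lambda|\le R_2$ and a \emph{far} regime $|\lambda|>R_2$, and prove $\varphi(\lambda)\notin K$ in each.

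First I would construct $K$ as in (\ref{55}); since $\ww\in\fdom_0$ only the case $0\le\uu\le\half$ occurs. Every vertex of the six triangles is one of $0,\pm1,\pm\ww,\pm(\ww-1)$, each of modulus at most $\sqrt{1+\vv^2}\le 1+\vv<R_2$, so $\varphi$ is controlled at all of them and $K$ is a closed neighborhood of $\varphi(0)$, exactly as in the preceding lemma.

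In the near regime I would invoke Lemma \ref{54}, applied to the six circumdisks. Although Lemma \ref{54} is phrased for a global $\epsilon$-perturbation, its proof uses $|\varphi(\zeta)-\zeta|<\epsilon$ only at the three triangle vertices and at the single test point $\varphi(\lambda)$; when $|\lambda|\le R_2$ all four lie in $U(0,R_2)$, so the case analysis (i)--(v) goes through unchanged and gives $\varphi(\lambda)\notin K$. In the far regime I would instead bound the extent of $K$. Each of the six Delaunay triangles has $0$ as a vertex and is congruent to $\triangle(0,1,\ww)$, so its unperturbed circumcenter lies at distance $\rho_0=\left|\half+\frac{\uu^2-\uu+\vv^2}{2\vv}\rmi\right|\le\half\sqrt{1+\vv^2}$ from $0$. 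By Lemma \ref{53} (and the analogous estimate for the other five disks) each perturbed circumradius is at most $\rho_0+\tfrac14+\epsilon$, so each disk, passing through $\varphi(0)$, lies within distance $2\rho_0+\half+2\epsilon$ of $\varphi(0)$, hence within distance $\sqrt{1+\vv^2}+\half+3\epsilon$ of the origin. Since $\sqrt{1+\vv^2}+\half+3\epsilon<3+3\vv=R_1$ for $\vv\ge\frac{\sqrt3}{2}$ and $\epsilon\le\frac{1}{45+10\vv}$, every point of $K$ has modulus $<R_1$; as $|\varphi(\lambda)|\ge R_1$ here, again $\varphi(\lambda)\notin K$.

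In either regime $\varphi(\lambda)\notin K$, so $V(\varphi(0),\varphi(\Lambda))$ is not adjacent to $V(\varphi(\lambda),\varphi(\Lambda))$ by the argument of the preceding lemma, and adjacency forces $|j|\le1$, $|k|\le1$. I expect the extent estimate to be the crux: one must verify that the radii satisfy $R_1<R_2$ with $K\subset U(0,R_1)$ while every triangle vertex still lies in $U(0,R_2)$, and --- the point needing most care --- that Lemma \ref{53}'s circumcenter bound, established only for the reference triangle $\triangle(\varphi(0),\varphi(1),\varphi(\ww))$, transfers to the other five circumdisks through the translation and point-reflection symmetries of $\Lambda$ without enlarging $\epsilon$ or $\vv$.
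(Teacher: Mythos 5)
Your proposal follows essentially the same argument as the paper: the same union $K$ of six circumdisks, the same split of lattice points at $|j+k\ww| = 4+4\vv$, Lemma~\ref{54} in the near regime, and in the far regime the observation that $K$ lies in a disk of radius $< 3+3\vv$ so that $|\varphi(j+k\ww)| \ge 3+3\vv$ forces $\varphi(j+k\ww) \notin K$. The only difference is cosmetic: the paper simply asserts $K \subset U(0,2+2\vv)$, whereas you derive the (consistent, slightly sharper) bound $\sqrt{1+\vv^2}+\tfrac12+3\epsilon$ via Lemma~\ref{53}, which is a valid way to justify that assertion.
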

\begin{proof}
Let $K$ be a closed neighborhood of $\varphi(0)$, defined in
(\ref{55}) or (\ref{56}).  Then $K \subset U(0, 2+2\vv)$.  If $j+k\ww
\in \Lambda$ and $|j+k\ww| \le U(0,4+4\vv)$, then Lemma~\ref{54}
implies that $V(\varphi(0), \varphi(\Lambda))$ is not adjacent to
$V(\varphi(j+k\ww), \varphi(\Lambda))$.  If $j+k\ww \in \Lambda$ and
$|j+k\ww| \ge U(0,4+4\vv)$, then $|\varphi(j+k\ww)| \ge 3+3\vv$, so
$\varphi(j+k\ww) \not\in K$, and $V(\varphi(0), \varphi(\Lambda))$ is
not adjacent to $V(\varphi(j+k\ww), \varphi(\Lambda))$.
\end{proof}

\begin{lemma}
\label{57}
Let $\ww = \uu + \vv\rmi \in \fdom_0$.
Let $\Lambda = \zahl + \ww \zahl$ be a linear lattice.  Let $\varphi :
\cx \to \cx$ be a local $(\epsilon, 3+3\vv, 4+4\vv)$-perturbation, and
suppose that $\epsilon \le \frac{1}{45+10\vv}$.  Then $V(\varphi(0),
\varphi(\Lambda)) \subset U(0, \frac{3}{4}(1+v))$.
\end{lemma}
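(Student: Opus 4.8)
\section*{Proof proposal for Lemma~\ref{57}}

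The plan is to realize $V(\varphi(0),\varphi(\Lambda))$ as a bounded convex polygon and to bound each of its vertices. A Voronoi cell is an intersection of half-planes, hence convex; by Lemma~\ref{60} the only sites whose bisectors can bound it are the $\varphi(j+k\ww)$ with $|j|\le 1$, $|k|\le 1$, so the cell is bounded and each of its vertices is the circumcenter of a Delaunay triangle $\triangle(\varphi(0),\varphi(\lambda),\varphi(\lambda'))$ whose other two vertices are adjacent neighbours drawn from $\set{\pm1,\pm\ww,\pm(\ww-1),\pm(\ww+1)}$. Since a closed disk is convex, $V(\varphi(0),\varphi(\Lambda))$, being the convex hull of finitely many of its vertices, lies in $U(0,\frac34(1+\vv))$ as soon as every such circumcenter lies strictly inside it. The whole task therefore reduces to a uniform bound on circumcenters.

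I would first halve the work by symmetry. The map $\psi(\zeta):=-\varphi(-\zeta)$ is again a local $(\epsilon,3+3\vv,4+4\vv)$-perturbation, it satisfies $\psi(\Lambda)=-\varphi(\Lambda)$ and $V(\psi(0),\psi(\Lambda))=-V(\varphi(0),\varphi(\Lambda))$, so a circumcenter bound proved for the triangles built on $1,\ww,\ww-1,\ww+1$, applied to both $\varphi$ and $\psi$, settles the centrally symmetric ones as well. For $\triangle(\varphi(0),\varphi(1),\varphi(\ww))$ Lemma~\ref{53} already puts the circumcenter in $[\frac14,\frac34]\times[\frac\vv4,\frac{3\vv}4]$, whence its modulus is at most $\frac34\sqrt{1+\vv^2}\le\frac34(1+\vv)$.

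For the triangles $\triangle(0,\ww-1,\ww)$ and $\triangle(0,\ww-1,-1)$ a direct computation gives the unperturbed circumcenters
\[
\Big(\uu-\half\Big)+\frac{\vv^2+\uu(1-\uu)}{2\vv}\rmi
\qquad\text{and}\qquad
-\half+\frac{\vv^2-\uu(1-\uu)}{2\vv}\rmi ,
\]
each of modulus well below $\frac34(1+\vv)$ on $\fdom_0$. To control the perturbed circumcenters I would reuse the mechanism of Lemma~\ref{53}: expand $w_c=\frac{w_1+w_2}2+\frac{\rmi(w_2-w_1)}2 f(w_0)$, move the three vertices one at a time, and bound each move by $\frac12|{\rm edge}|\,|\grad f|\,\epsilon$ via Lemma~\ref{51}. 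The point is that the factor $|\grad f|$, which by the formula recorded before Lemma~\ref{52} equals $\frac{|e_1||e_2||e_3|}{(2\,\mathrm{area})^2}$, is again $\le\frac73$: each of these triangles has a short side of length about $1$, two sides of length at most $\sqrt{\tfrac14+\vv^2}$ and $\sqrt{1+\vv^2}$, and area $\frac\vv2$, so the ratio is $\le\frac{\sqrt{(\frac14+\vv^2)(1+\vv^2)}}{\vv^2}\le\frac73$ for $\vv\ge\frac{\sqrt3}2$ (with room for the perturbation). The resulting displacement is $O(\epsilon(1+\vv))$, which under $\epsilon\le\frac1{45+10\vv}$ stays well below the gap (of order $\vv$) between these circumcenters and the bounding circle.

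The part needing genuine care is the enumeration of which triangles actually occur, because near $\uu=0$ the unperturbed cell is a rectangle and the perturbation can make either $\ww-1$ or $\ww+1$ (or both) edge-adjacent; one must therefore also bound the circumcenters of $\triangle(0,1,\ww+1)$, $\triangle(0,\ww+1,\ww)$ and $\triangle(0,\ww,-1)$. A direct computation places all of these inside $U(0,\frac34(1+\vv))$ too, with even more room, since their extra terms in $\uu$ are small precisely where these triangles appear. The main obstacle is thus the bookkeeping: checking that the finitely many circumcenter positions allowed by Lemma~\ref{60} are all covered and all land strictly inside the disk, uniformly in $\ww\in\fdom_0$ and in the perturbation. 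Once that is done, convexity gives $V(\varphi(0),\varphi(\Lambda))\subset U(0,\frac34(1+\vv))$.
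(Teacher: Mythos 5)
Your per-triangle estimates are sound (the circumcenter formulas you compute are correct, the identity $|\grad f|=\frac{|e_1||e_2||e_3|}{(2\,{\rm area})^2}$ and the resulting $\le\frac73$ bound do transfer to the other triangle shapes, and the symmetry reduction via $\psi(\zeta)=-\varphi(-\zeta)$ is legitimate), but the reduction they serve has a genuine gap, and it is not mere bookkeeping. Lemma~\ref{60} only says that edge-adjacent neighbours lie among $\varphi(j+k\ww)$, $|j|,|k|\le1$; it says nothing about \emph{which pairs} of them meet at a vertex of the cell. Your plan, ``bound every circumcenter position allowed by Lemma~\ref{60},'' cannot be carried out, because some allowed pairs admit no bound at all: the triple $\varphi(0),\varphi(1),\varphi(-1)$ lies within $\epsilon$ of a collinear triple, so the circumcenter of $\triangle(\varphi(0),\varphi(1),\varphi(-1))$ (similarly for $\varphi(\pm\ww)$, or $\varphi(\pm(1+\ww))$) lies at distance at least of order $\frac1\epsilon\ge 45+10\vv$ from the origin --- far outside $U(0,\frac34(1+\vv))$. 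Hence if such a pair could contribute a vertex the lemma would be false, so excluding these pairs is logically necessary; and the natural exclusion argument (the intersection point is far away, hence closer to $\varphi(\pm\ww)$ than to $\varphi(0)$, hence not in the cell) is itself an a priori localization of the cell of exactly the kind being proved. A second, related soft spot: ``the cell is the convex hull of finitely many of its vertices'' presupposes the cell is a polygon, but under the definition of a local $(\epsilon,3+3\vv,4+4\vv)$-perturbation the sites outside $U(0,4+4\vv)$ are only required to stay outside $U(0,3+3\vv)$ and may accumulate, so the cell is an infinite intersection of half-planes whose extreme points need not a priori be intersections of two bisectors.

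The paper's proof avoids both issues by never invoking Lemma~\ref{60}. Since the cell is an intersection of half-planes over \emph{all} sites, it is in particular contained in the quadrilateral cut out by the four bisectors toward $\varphi(\pm1)$ and $\varphi(\pm\ww)$ --- no adjacency information is needed --- and the corners of that fixed quadrilateral are the circumcenters of $\triangle(\varphi(0),\varphi(1),\varphi(\ww))$, $\triangle(\varphi(0),\varphi(\ww),\varphi(-1))$ and their central images. Only these two triangle shapes ever need to be controlled. Your direct computation for the $(0,\ww,-1)$-shape is in fact the right supplement there: Lemma~\ref{53} does not literally apply to it, and its circumcenter (equal to $-\frac12+\vv\rmi$ when $\ww=\rme^{\rmi\pi/3}$) escapes the rectangle named in the paper's proof, though not the disk $U(0,\frac34(1+\vv))$. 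So if you replace ``the vertices of the cell'' by ``the corners of this fixed quadrilateral,'' your estimates assemble into a complete proof and the unresolved enumeration disappears.
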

\begin{proof}
Denote the circumcenter of $\triangle(w_0, w_1, w_2)$ by $P(w_0, w_1,
w_2)$.  The Voronoi cell $V(\varphi(0), \varphi(\Lambda))$ is a subset
of the quadrilateral with the vertices
$P(\varphi(0),\varphi(1), \varphi(\ww))$,
$P(\varphi(0), \varphi(\ww), \varphi(-1))$,
$P(\varphi(0), \varphi(-1), \varphi(-\ww))$, and 
$P(\varphi(0), \varphi(-\ww), \varphi(1))$,
which are all contained in the region
$\set{\zeta : -\frac{3}{4} \le \re(\zeta) \le \frac{3}{4},
  -\frac{3v}{4} \le \im(\zeta)\le \frac{3v}{4}} 
  \subset U(0, \frac{3}{4}(1+v))$ by Lemma~\ref{53}.
\end{proof}


Let $d>0$.  Let $W(d) = \set{\zeta \in \cx : -d \le \im(\zeta) \le d}$
be a strip of width $2d$.  Let $H(w) = \set{\zeta \in \cx: |\zeta| \le
  |\zeta-w|}$, $w \in \punccx := \cx \setminus \set{0}$, be a half
plane.  The symmetric difference of two subsets $A,B \subset\cx$ is
denoted by $A \triangle B = (A\setminus B) \cup (B \setminus A)$.  The
area of a subset $A \subset\cx$ is denoted by $|A|$.

\begin{lemma}
  \label{8}
  Let $d,\epsilon>0$, and $w =\uu+ \rmi \vv \in\cx$.  Suppose
  that $|w-1| < \epsilon < \frac{1}{2}$.  Then $|W(d) \cap(H(1)
  \;\triangle\;H(w))| < 4\epsilon d(2d+1)$.
\end{lemma}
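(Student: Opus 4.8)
The plan is to reduce the two-dimensional area to a one-dimensional integral by slicing the strip $W(d)$ horizontally. First I would identify the two boundaries as lines. The boundary of $H(1)$ is the vertical line $\re(\zeta) = \half$, and writing $\zeta = x + y\rmi$, the condition $|\zeta|^2 = |\zeta - w|^2$ simplifies to $\re(\bar w \zeta) = \half|w|^2$, so the boundary of $H(w)$ is the line $\uu x + \vv y = \half(\uu^2 + \vv^2)$. Since $\epsilon < \half$ we have $|\uu - 1| < \epsilon$ and $|\vv| < \epsilon$, hence $\uu > \half > 0$. Therefore at each fixed height $y$ both half-planes are left-rays: $H(1)$ is $\set{x \le \half}$ and $H(w)$ is $\set{x \le x_w(y)}$, where $x_w(y) := (\half(\uu^2+\vv^2) - \vv y)/\uu$. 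The cross-section of the symmetric difference at height $y$ is then the interval between $\half$ and $x_w(y)$, of length $|x_w(y) - \half|$; in particular it is a bounded interval, so $W(d) \cap (H(1)\triangle H(w))$ is a bounded region even though the strip is unbounded horizontally.

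Consequently I would write the area as
\[
|W(d) \cap (H(1)\;\triangle\;H(w))| = \int_{-d}^{d} \Bigl| x_w(y) - \half \Bigr|\, dy,
\]
and then expand, using $\uu(\uu-1) + \vv^2 = \uu^2 + \vv^2 - \uu$,
\[
x_w(y) - \half = \frac{\uu - 1}{2} + \frac{\vv^2}{2\uu} - \frac{\vv}{\uu}\, y.
\]
Bounding the three terms separately with $|\uu - 1| < \epsilon$, $|\vv| < \epsilon$, $\uu > \half$, and $|y| \le d$ gives $\bigl|\tfrac{\uu-1}{2}\bigr| < \tfrac{\epsilon}{2}$, $\tfrac{\vv^2}{2\uu} < \epsilon^2$, and $\bigl|\tfrac{\vv}{\uu}y\bigr| < 2\epsilon d$, so $|x_w(y) - \half| < \tfrac{\epsilon}{2} + \epsilon^2 + 2\epsilon d$ uniformly in $y$.

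Integrating this uniform bound over $[-d,d]$ yields $2d\bigl(\tfrac{\epsilon}{2} + \epsilon^2 + 2\epsilon d\bigr) = \epsilon d\,(1 + 2\epsilon + 4d)$, and since $\epsilon < \half$ forces $1 + 2\epsilon < 2$, this is at most $2\epsilon d(2d+1) < 4\epsilon d(2d+1)$, which is the claimed estimate. I do not expect a genuine obstacle here: the argument is an elementary slicing computation, and the only point needing a moment's care is verifying that each horizontal slice is a finite interval (so that the clipped symmetric difference is bounded despite $W(d)$ being horizontally infinite). The factor-of-two slack in the final comparison indicates that the stated constant $4$ is not sharp, so there is ample room in the estimate.
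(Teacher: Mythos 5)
Your proof is correct and follows essentially the same route as the paper's: both arguments come down to bounding how far the boundary of $H(w)$ (the perpendicular bisector of $\ell(0,w)$) drifts horizontally from the line $\re(\zeta)=\half$ inside the strip, and both obtain a deviation of about $\epsilon(2d+1)$. The only difference is bookkeeping: you integrate the exact cross-sectional length $|x_w(y)-\half|$ over $-d\le y\le d$, while the paper simply encloses $W(d)\cap(H(1)\;\triangle\;H(w))$ in the rectangle $\set{x+\rmi y : |x-\half|\le \epsilon(2d+1),\ |y|\le d}$, which is why your bound $2\epsilon d(2d+1)$ comes out a factor of $2$ sharper than the stated $4\epsilon d(2d+1)$.
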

\begin{proof}
The perpendicular bisector of the line segment $\ell(0,w)$ intersect
  the horizontal line $\set{\zeta\in\cx: \im(\zeta)=d}$ at the point
  $\frac{w}{2} + \frac{\rmi w}{\uu}(d-\frac{\vv}{2})
   = \frac{\uu}{2} - \frac{\vv}{\uu}(d-\frac{\vv}{2}) + \rmi d$.
  Since $|\uu- 1|<\epsilon$ and $|\vv| < \epsilon$, we have
\[ \left| \frac{\uu}{2} - \frac{\vv}{\uu}(d-\frac{\vv}{2}) - \half \right|
  < \frac{\epsilon}{2} + {2\epsilon} \left( d+\frac{1}{4} \right)
  = \epsilon({2d}+1).
\]
  So we have $W(d)\cap(H(1) \;\triangle\;H(w)) \subset L$, where
  $L:=\set{x+\rmi y: |x-\frac{1}{2}| \le \epsilon({2d}+1),
    |y| \le d}$.  The area of the rectangle $L$ is $|L| =
  2\epsilon({2d}+1) \cdot 2d$.
\end{proof}

\begin{proposition}
\label{66}
Let $\ww = \uu + \vv\rmi \in \fdom_0$.
Let $\Lambda = \zahl + \ww \zahl$ be a linear lattice.
Let $\varphi : \cx \to \cx$ be 
a local $(\epsilon, 3+3\vv, 4+4\vv)$-perturbation.
Suppose that $\varphi(0)=0$ and $\epsilon \le \frac{1}{45+10\vv}$.
Then 
\[ |V(0, \Lambda) \;\triangle\; V(0, \varphi(\Lambda))| 
\le 12 \epsilon(1+v)(5+3v).
\]
\end{proposition}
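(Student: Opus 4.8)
The plan is to realize both Voronoi cells as intersections of one common family of eight half-planes and then to estimate, half-plane by half-plane, how much the intersection moves under $\varphi$. Write $\lambda_{j,k} = j + k\ww$ and set $I = \set{(j,k)\in\zahl^2 : |j|\le1,\ |k|\le1}\setminus\set{(0,0)}$, eight lattice vectors. Put $A = V(0,\Lambda)$ and $B = V(0,\varphi(\Lambda))$; since $\varphi(0)=0$, both are intersections of half-planes $H(\cdot)$. First I would verify
\[
 A = \bigcap_{(j,k)\in I} H(\lambda_{j,k}), \qquad
 B = \bigcap_{(j,k)\in I} H(\varphi(\lambda_{j,k})).
\]
For $A$ this is the geometry of the reduced domain: for $\ww\in\fdom_0$ the vectors defining the edges of $V(0,\Lambda)$ lie among $\pm1,\pm\ww,\pm(\ww-1)$, all in $I$; since $A\subseteq H(\lambda)$ for every lattice vector $\lambda$, intersecting only over $I$ recovers $A$. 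For $B$ this is exactly Lemma~\ref{60}: every site adjacent to $V(0,\varphi(\Lambda))$ is $\varphi(\lambda_{j,k})$ with $(j,k)\in I$, and a Voronoi cell is cut out by the half-planes against its adjacent sites.

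Next I would use the elementary inclusion
\[
 A\,\triangle\,B \ \subseteq\ \bigcup_{(j,k)\in I}(A\cup B)\cap\bigl(H(\lambda_{j,k})\,\triangle\,H(\varphi(\lambda_{j,k}))\bigr),
\]
which holds because a point of $A\setminus B$ lies in every $H(\lambda_{j,k})$ but fails some $H(\varphi(\lambda_{j,k}))$, and symmetrically for $B\setminus A$. Applying Lemma~\ref{57} both to $\varphi$ and to the identity map (itself a local $(\epsilon,3+3\vv,4+4\vv)$-perturbation) gives $A\cup B\subseteq U(0,\tfrac34(1+\vv))$. It then remains to bound each of the eight terms.

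Fix $(j,k)\in I$ and write $\lambda=\lambda_{j,k}$. One checks $1\le|\lambda|\le\tfrac32+\vv\le4+4\vv$ over $I$ (the lower bound uses $|\ww|\ge1$, $\vv\ge\tfrac{\sqrt3}{2}$, $\uu\le\half$; the upper bound is largest near $\ww+1$), so the local perturbation estimate gives $|\varphi(\lambda)-\lambda|<\epsilon$. I would then normalize by the $\C$-linear map $T(\zeta)=\zeta/\lambda$, which carries $H(\lambda)$ to $H(1)$ and $H(\varphi(\lambda))$ to $H(w)$ with $w=\varphi(\lambda)/\lambda$ and $|w-1|=|\varphi(\lambda)-\lambda|/|\lambda|<\epsilon/|\lambda|=:\epsilon'<\half$, sends $U(0,\tfrac34(1+\vv))$ into the strip $W(d')$ with $d'=\tfrac{3(1+\vv)}{4|\lambda|}$, and scales areas by $|\lambda|^2$. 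Lemma~\ref{8} then gives
\[
 \bigl|(A\cup B)\cap(H(\lambda)\,\triangle\,H(\varphi(\lambda)))\bigr|
 \le |\lambda|^2\cdot4\epsilon'd'(2d'+1)
 = 3\epsilon(1+\vv)\Bigl(\tfrac{3(1+\vv)}{2|\lambda|}+1\Bigr)
 \le \tfrac32\epsilon(1+\vv)(5+3\vv),
\]
the last step using $|\lambda|\ge1$. Summing over the eight indices yields $|A\,\triangle\,B|\le8\cdot\tfrac32\epsilon(1+\vv)(5+3\vv)=12\epsilon(1+\vv)(5+3\vv)$.

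The genuinely delicate step is the first one, pinning down the common eight half-planes. For $A$ it rests on the shape of $\fdom_0$, and for $B$ it is precisely where Lemma~\ref{60} enters, together with the fact that a cell is determined by its adjacent sites. Everything afterwards is bookkeeping: the normalization $T$ makes Lemma~\ref{8} applicable with a bound uniform in $\lambda$, and the target constant $12$ emerges as $8\cdot\tfrac32$.
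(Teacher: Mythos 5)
Your proposal is correct and follows essentially the same route as the paper's own proof: the same eight neighbors $\pm1,\pm\ww,\pm(1+\ww),\pm(1-\ww)$, Lemmas~\ref{60} and \ref{57} to realize both cells as intersections of the corresponding half-planes inside $U(0,\tfrac34(1+\vv))$, and Lemma~\ref{8} applied after rescaling by $1/\lambda$ (which the paper does implicitly in the same $4\tfrac{\epsilon}{|\lambda|}\tfrac{R}{|\lambda|}(\tfrac{2R}{|\lambda|}+1)|\lambda|^2$ estimate), with the identical final bound $8\cdot\tfrac32\epsilon(1+\vv)(5+3\vv)=12\epsilon(1+\vv)(5+3\vv)$. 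The only difference is expository: you spell out the normalization $T(\zeta)=\zeta/\lambda$ and the application of Lemma~\ref{57} to the identity map, which the paper leaves implicit.
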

\begin{proof}
Let $N_1 = \set{\pm 1, \pm \ww, \pm (1+w), \pm(1-\ww)}$.
By Lemmas \ref{60} and \ref{57}, we have
\begin{align*}
V(0, \Lambda) &= \bigcap_{\lambda\in N_1} H(\lambda) \subset U(0, R), \\
V(0, \varphi(\Lambda)) &= \bigcap_{\lambda\in N_1} H(\varphi(\lambda)) \subset U(0, R)
\end{align*}
where $R:= \frac{3}{4}(1+v)$.  So
\[ V(0,\Lambda) \;\triangle\; V(0,\varphi(\Lambda))
 \subset \bigcup_{\lambda \in N_1}((H(\lambda) \;\triangle\; H(\varphi(\lambda))) \cap U(0,R).
\]
By Lemma~\ref{8}, we have
\[ 
 |(H(\lambda) \;\triangle\; H(\varphi(\lambda))) \cap U(0, R)|
 \le 4\frac{\epsilon}{|\lambda|} \frac{R}{|\lambda|} \left(\frac{2R}{|\lambda|}+1 \right) |\lambda|^2
 \le  4\epsilon R (2R +1).
\]
So we obtain
\[ 
   |V(0,\Lambda) \;\triangle\; V(0,\varphi(\Lambda))|
   \le 32 \epsilon R(2R+1) 
    = 12 \epsilon(1+v)(5+3v).
\]
\end{proof}

\section{Area convergence of Voronoi cells}

In this section, an Archimedean spiral lattice is parameterized, normalized, and then linearized to show the area convergence of Voronoi cells.

Let $\alpha>0$, $\theta \in \real$, $\mu \ge 1$.  The set
\[
  S_\mu := \set{z_{\mu+j} :=
  (\mu+j)^\alpha \rme^{(\mu+j)\theta\rmi} : j\in\zahl, j> -\mu}
\cup \set{0}
\]
is called a parameterized Archimedean spiral lattice with an exponent
$\alpha$.  Note that $S_{\mu} = S_{\mu+1}$
as a point set.  So the $j$-th site $z_{\mu+j}$ in $S_\mu$ can be
regarded as the $0$-th site in $S_{\mu+j}$.  The Voronoi cell of
$z_{\mu+j}$ in $S_\mu$ is given as
\[
  V(z_{\mu+j}, S_\mu)
:= \set{\zeta \in \cx:  |\zeta - z_{\mu+j}| \le |\zeta - z'|,
 \forall z' \in S_\mu}.
\]

The point set 
\[
 \Sigma_\mu 
 := \Set{\tildez_{\mu,j}:= \left(1+ \frac{j}{\mu}\right)^\alpha
  \rme^{j\theta\rmi} : j\in\zahl, j>-\mu} \cup \set{0}  
  = \mu^{-\alpha} \rme^{- \mu \theta \rmi} S_\mu 
\]
is
called a normalized spiral lattice.  The Voronoi cell of
$\tildez_{\mu,j}$ in $\Sigma_\mu$ is given as
\[
V(\tildez_{\mu,j},\Sigma_\mu)
:= \set{\zeta \in \cx:  |\zeta - \tildez_{\mu,j}| \le |\zeta - \tildez'|,
 \forall \tildez' \in \Sigma_\mu}.
\]
We have $V(z_{\mu+j}, S_\mu) = \mu^\alpha \rme^{\mu \theta \rmi}
V(z_{\mu,j},\Sigma_\mu)$ and $|V(z_{\mu+j}, S_\mu)| = \mu^{2\alpha}
|V(z_{\mu,j}, \Sigma_\mu)|$.  In particular, by taking $j=0$ we have
\begin{equation}
  \label{13}
  |V(z_{\mu}, S_\mu)| = \mu^{2\alpha} |V(1,\Sigma_\mu)|.
\end{equation}

The linear lattice
\[ \Lambda_\mu := \lambda_\mu \zahl + 2\pi \rmi \zahl,
\quad \lambda_\mu := \frac{\alpha}{\mu} + 2\pi \fracpart{\frac{\theta}{2\pi}} \rmi
\]
is called a linearization of $S_\mu$.  Let $\lambda_{\mu,j}:=\frac{j
  \alpha}{\mu} + 2\pi \fracpart{\frac{j\theta}{2\pi}} \rmi \in
\Lambda_\mu$, $j>-\mu$.  The Voronoi cell of $\lambda_{\mu,j}$ in
$\Lambda_\mu$ is given as
\[ V(\lambda_{\mu,j},\Lambda_\mu) = 
 \set{\zeta \in \cx:  |\zeta - \lambda_{\mu,j}| \le |\zeta - \lambda'|,
 \forall \lambda' \in \Lambda_\mu}.
\]

Let $W=\set{s+\rmi t : s>-\mu, -\pi<t<\pi}$.  Define a mapping
$\phi : W \to \cx$ by 
\[ \phi(s+\rmi t) = \left( 1+\frac{s}{\mu} \right)^\alpha \rme^{\rmi t}.
\] 
 It is a homeomorphism of
$W$ onto $\phi(W) = \cx \setminus \set{\zeta \in \real: \zeta \le 0}$.
We have 
\[ \phi(W \cap \Lambda_\mu) \cup \set{0} = \Sigma_\mu
\]
if $\frac{\theta}{2\pi}$ is irrational.  By Taylor's theorem, there
exists a constant $C=C_{\alpha}>0$ which depends only on $\alpha$,
such that
\[ |\phi(s+\rmi t)-(1+\frac{\alpha s}{\mu} +\rmi t)| \le C_\alpha (s^2 + t^2)
\]
for $s + t\rmi \in U(0,1)$.

\bigskip

Suppose that $\frac{\theta}{2\pi}$ is irrational.  We consider the
continued fraction expansion of $x=\frac{\theta}{2\pi}$ as in
(\ref{25}).  We assume that
\begin{equation}
\label{67}
     a_i \le M_1,
\end{equation}
for some constant $M_1>0$.
An integer $j>0$ is called a parastichy number of the linear lattice
$\Lambda_\mu$ if $V(0,\Lambda_\mu)$ is edge-adjacent to
$V(\lambda_{\mu,j},\Lambda_\mu)$.  Let
\begin{equation}
  \label{16}
  \mu_{i,k}
= \frac{\alpha}{2\pi} \sqrt{ - \frac{q_i q_{i,k}}
{\fracpart{\frac{q_i \theta}{2\pi}}
  \fracpart{\frac{q_{i,k}\theta}{2\pi}}}}
\end{equation}
for $i\le1$ and $0 \le k \le a_{i+1}$.  Note that
$\mu_{i,k}<\mu_{i,k+1}$ for $0\le k < a_{i+1}$, and $\mu_{i,a_{i+1}} =
\mu_{i+1,0}$.  If $\mu=\mu_{i,k}$, we have
\[
 q_{i} q_{i,k} \left(\frac{\alpha}{2\pi\mu}\right)^2 
+ \fracpart{\frac{q_i\theta}{2\pi}}
\fracpart{\frac{q_{i,k}\theta}{2\pi}} = 0,
\]
and so $|\angle(\lambda_{\mu,q_i},0,\lambda_{\mu,q_{i,k}})| =
\frac{\pi}{2}$.

Let $i\ge1$, $0\le k < a_{i+1}$.  Lemma~\ref{58} implies that if $\mu
= \mu_{i,k}$, then the Voronoi cell $V(0, \Lambda_\mu)$ is a rectangle
with two parastichy numbers $q_i, q_{i,k}$.  If $\mu_{i,k} < \mu <
\mu_{i,k+1}$, then $V(0,\Lambda_\mu)$ is a hexagon with three
parastichy numbers $q_i, q_{i,k}, q_{i,k+1}$.

Suppose that $\mu_{i,k} \le \mu < \mu_{i,k+1}$.  Then there exist
parastichy numbers $m,n \in \set{q_i, q_{i,k}, q_{i,k+1}}$
of $\Lambda_\mu$ such that
\begin{equation}
  \label{21}
  \lambda_{\mu,n}/\lambda_{\mu,m} \in \fdom \cup (-\fdom),
\end{equation}
where $\fdom = \set{w=u+v\rmi \in \cx : -\half \le u \le \half, v \ge
 \frac{\sqrt{3}}{2}, |w|\ge1}$.  Let
\[ \delta_\mu := |\lambda_{\mu,m}| 
 = \min\set{ |\lambda_{\mu,q_i}|, |\lambda_{\mu,q_{i,k}}|, |\lambda_{\mu,q_{i,k+1}}|}
\]
be the length of the shortest nonzero vector in $\Lambda_\mu$.
Let $v_\mu := |\im(\lambda_{\mu,n} / \lambda_{\mu,m})|$.
Then we have $|V(0,\Lambda_\mu)| = v_\mu \delta_\mu^2$,
$v_\mu \ge \frac{\sqrt{3}}{2}$, 
and
\[ \left( \frac{\delta_\mu}{2\pi} \right)^2
\le \frac{2}{\sqrt{3}} \frac{\alpha}{2\pi\mu}
\]
as in (\ref{72}).
Lemma~\ref{68} implies that
$v_\mu \le M_2 := 1 + \half M_1$.

Let
\[ 
 \epsilon_\mu := C_\alpha (4+4M_2)^2 \sqrt{ \frac{4\pi\alpha}{\sqrt{3} \mu}}.
\]

\begin{lemma}
\label{70}
Suppose (\ref{67}) and 
\begin{equation}
\label{63}
 \mu \ge \frac{4\pi\alpha}{\sqrt{3}} (16C_\alpha(1+M_2))^2. 
\end{equation}
Then the mapping $-1+\phi : W \to \cx$ is a local
$(\epsilon_\mu\delta_\mu, (3+3v_\mu)\delta_\mu,
(4+4v_\mu)\delta_\mu)$-perturbation.
\end{lemma}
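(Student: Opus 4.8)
\emph{Approach.} The plan is to check, one by one, the defining clauses of a local $(\epsilon_\mu\delta_\mu,(3+3v_\mu)\delta_\mu,(4+4v_\mu)\delta_\mu)$-perturbation for the map $\varphi:=-1+\phi$, writing $R_1:=(3+3v_\mu)\delta_\mu$ and $R_2:=(4+4v_\mu)\delta_\mu$. That $\varphi$ is a homeomorphism onto its image is inherited verbatim from the homeomorphism property of $\phi:W\to\cx\setminus\set{\zeta\in\real:\zeta\le0}$ recorded above, and $\varphi(0)=\phi(0)-1=0$. The ordering $R_2>R_1$ is trivial. What remains is (i) the displacement bound $|\varphi(\zeta)-\zeta|<\epsilon_\mu\delta_\mu$ on $U(0,R_2)$, (ii) the escape bound $|\varphi(\zeta)|\ge R_1$ whenever $\zeta\in W$ and $|\zeta|\ge R_2$, and (iii) the ordering $R_1>\epsilon_\mu\delta_\mu$.

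\emph{Displacement bound.} First I would use (\ref{63}) to place $U(0,R_2)$ inside the disk $U(0,1)$ on which the Taylor estimate for $\phi$ holds: since $v_\mu\le M_2$ by Lemma~\ref{68} and $\delta_\mu^2\le\frac{4\pi\alpha}{\sqrt3\mu}$, the radius $R_2=(4+4v_\mu)\delta_\mu\le(4+4M_2)\delta_\mu$ drops below $1$ as soon as $\mu$ obeys (\ref{63}). On $U(0,R_2)$ the Taylor estimate then gives $|\varphi(\zeta)-\zeta|\le C_\alpha|\zeta|^2\le C_\alpha R_2^2\le C_\alpha(4+4M_2)^2\delta_\mu^2$. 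Inserting $\delta_\mu\le\sqrt{\frac{4\pi\alpha}{\sqrt3\mu}}$ into one of the two factors of $\delta_\mu^2$ and comparing with the definition $\epsilon_\mu=C_\alpha(4+4M_2)^2\sqrt{\frac{4\pi\alpha}{\sqrt3\mu}}$, the right-hand side collapses to exactly $\epsilon_\mu\delta_\mu$. Thus this clause is a substitution once the two a priori bounds $v_\mu\le M_2$ and $\delta_\mu^2\le\frac{4\pi\alpha}{\sqrt3\mu}$ are invoked; the only role of (\ref{67}) here is to furnish the uniform bound $M_2=1+\half M_1$ on $v_\mu$.

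\emph{Escape bound.} This is the delicate clause, because $W$ is an unbounded strip and the local linearization controls $\varphi$ only inside $U(0,1)$. I would split $\set{\zeta\in W:|\zeta|\ge R_2}$ at a fixed radius $\rho_0$ with $C_\alpha\rho_0<\half$. On the inner annulus $R_2\le|\zeta|\le\rho_0$ the function $\rho\mapsto\rho-C_\alpha\rho^2$ is increasing, so the displacement bound yields $|\varphi(\zeta)|\ge|\zeta|-C_\alpha|\zeta|^2\ge R_2-C_\alpha R_2^2\ge R_2-\epsilon_\mu\delta_\mu$, and this is $\ge R_1$ exactly when $\epsilon_\mu\le1+v_\mu$, an inequality that (\ref{63}) secures by forcing $\epsilon_\mu$ small. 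For $|\zeta|\ge\rho_0$ I would argue from the global geometry of $\phi$ rather than from Taylor: leaving the fixed ball $U(0,\rho_0)$ pushes $\varphi(\zeta)=\phi(\zeta)-1$ a fixed distance away from $\varphi(0)=0$, so that $|\varphi(\zeta)|$ is bounded below by an absolute positive constant, which for large $\mu$ dwarfs $R_1=O(\mu^{-1/2})$. Since $\varphi$ is a homeomorphism, this also rules out any distant point of $W$ being mapped back into $U(0,R_1)$.

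Finally, clause (iii), $\epsilon_\mu<3+3v_\mu$, is a weaker consequence of the same smallness of $\epsilon_\mu$ provided by (\ref{63}). The main obstacle is the escape bound: the Taylor control is worthless once $|\zeta|\ge1$, so the far-field lower bound on $|\varphi|$ must be read off from the explicit form of $\phi$, and the constants hidden in $\epsilon_\mu$ and in (\ref{63}) have to be tracked so that the Taylor-domain requirement $R_2\le1$ and the inner-annulus threshold $\epsilon_\mu\le1+v_\mu$ hold at once.
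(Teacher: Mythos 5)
Your homeomorphism step and your displacement bound coincide with the paper's argument (and your remark that one must first check $R_2\le 1$ so that the Taylor estimate applies is a point the paper glosses over). Both genuine gaps are in the escape bound, and they share one root cause: (\ref{63}) does \emph{not} force $\epsilon_\mu$ to be small. At the threshold $\mu = \frac{4\pi\alpha}{\sqrt{3}}(16C_\alpha(1+M_2))^2$ one has $\sqrt{4\pi\alpha/(\sqrt{3}\mu)} = \frac{1}{16C_\alpha(1+M_2)}$ exactly, hence $\epsilon_\mu = \frac{C_\alpha(4+4M_2)^2}{16C_\alpha(1+M_2)} = 1+M_2$. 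Since $v_\mu$ can be as small as $\frac{\sqrt{3}}{2}$ while $M_2 = 1+\half M_1 \ge \frac{3}{2}$, the inequality $\epsilon_\mu\le 1+v_\mu$ on which your inner-annulus step rests is false in general (and for large $M_1$ even your clause (iii), $\epsilon_\mu < 3+3v_\mu$, fails near the threshold). The paper closes the estimate precisely by \emph{not} routing it through $\epsilon_\mu$: on the circle $|\zeta|=R_2$ it keeps the $v_\mu$-dependent bound $|\varphi(\zeta)-\zeta|\le C_\alpha(4+4v_\mu)^2\delta_\mu^2 = 16C_\alpha(1+v_\mu)^2\delta_\mu^2$, and (\ref{63}) gives $\delta_\mu \le \frac{1}{16C_\alpha(1+M_2)} \le \frac{1}{16C_\alpha(1+v_\mu)}$, so one factor $(1+v_\mu)$ cancels and the displacement is at most $(1+v_\mu)\delta_\mu = R_2-R_1$. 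Your chain replaces $(1+v_\mu)^2$ by $(1+M_2)^2$ in the displacement and then compares against $1+v_\mu$; that mismatch is exactly what (\ref{63}) cannot absorb.

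The second gap is the far field, which you rightly call the delicate clause but then settle by assertion. The claim that $|\varphi(\zeta)|$ exceeds an absolute positive constant once $|\zeta|\ge\rho_0$ is not proved, and it is false for the formula as printed: $\phi(s+\rmi t) = (1+s/\mu)^\alpha\rme^{\rmi t}$ depends on $\mu$, and for fixed real $s\ge\rho_0$ it gives $|\varphi(s)| = (1+s/\mu)^\alpha-1 \approx \alpha s/\mu$, far below $R_1\sim\mu^{-1/2}$. (The lemma, and the paper's own use of $|\phi(\zeta)-1-\zeta|\le C_\alpha|\zeta|^2$, cohere only under the reading $\phi(s+\rmi t)=(1+s/\alpha)^\alpha\rme^{\rmi t}$, a fixed map; then your properness claim is true but still needs an argument, and ``dwarfs $R_1$ for large $\mu$'' does not suffice because the lemma must hold at the threshold of (\ref{63}), where $R_1$ can be comparable to $\frac{3}{16C_\alpha}$.) The idea your proposal is missing is that no far-field analysis is needed at all, which is why the paper verifies the escape bound only on the circle $|\zeta|=R_2$: the extension to all $|\zeta|\ge R_2$ is purely topological. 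Since $\varphi$ is injective and continuous, $\varphi(\set{|\zeta|=R_2})$ is a Jordan curve whose interior equals $\varphi(U(0,R_2))$ and contains $\varphi(0)=0$; every $\zeta\in W$ with $|\zeta|>R_2$ is therefore sent into the exterior of this curve. The curve avoids the connected disk $U(0,R_1)$ by the circle estimate, and that disk contains the interior point $0$, so $U(0,R_1)$ lies in the interior region and the exterior is disjoint from it. This removes your $\rho_0$-splitting and every appeal to the global behaviour of $\phi$.
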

\begin{proof}
For $|\zeta| \le (4 + 4 v_\mu)\delta_\mu$, we have
\[
 |\phi(\zeta)-1-\zeta|
\le C_\alpha |\zeta|^2 \le C_\alpha (4+4 v_\mu)^2\delta_\mu^2 
 \le \epsilon_\mu \delta_\mu.
\]
For $|\zeta| = (4+4 v_\mu)\delta_\mu$, we have
\begin{align*}
 |-1+\phi(\zeta)| 
 &\ge |\zeta| - |\phi(\zeta)-1 - \zeta|  \\
 &\ge (4+4 v_\mu)\delta_\mu - C_\alpha(4+4 v_\mu)^2\delta_\mu^2  \\
 &\ge (3 + 3v_\mu)\delta_\mu
\end{align*}
by (\ref{63}).
\end{proof}

\begin{lemma}
\label{73}
Suppose (\ref{67}), (\ref{63}),
and
\begin{equation}
\label{69}
\mu \ge  \frac{4\pi\alpha}{\sqrt{3}} C_\alpha^2 (45+10M_2)^2 (4+4M_2)^4.
\end{equation}
Then there exists a contant $C>0$ depending only on $\alpha$, such that
\begin{equation}
\label{24}
  \frac{|V(0,\Lambda_\mu) \;\triangle\; V(0,\phi(W \cap \Lambda_\mu)-1)|}
       {|V(0,\Lambda_\mu)|}
  \le \frac{C}{\sqrt{\mu}}.
\end{equation}
\end{lemma}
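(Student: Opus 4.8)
The plan is to reduce (\ref{24}) to Proposition~\ref{66} by normalizing $\Lambda_\mu$ with a similarity that turns it into a lattice of the form $\zahl + \ww\zahl$ with $\ww \in \fdom_0$. The conceptual point is that the \emph{ratio} in (\ref{24}) is invariant under such a similarity, so the two area factors of $\delta_\mu^2$ cancel and the relative error ends up controlled by the dimensionless perturbation size $\epsilon_\mu \sim \mu^{-1/2}$, rather than by any absolute length scale.

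First I fix the reduced basis. By (\ref{21}) there are parastichy numbers $m,n \in \set{q_i,q_{i,k},q_{i,k+1}}$ with $\ww := \lambda_{\mu,n}/\lambda_{\mu,m} \in \fdom \cup (-\fdom)$, and $\delta_\mu = |\lambda_{\mu,m}|$ is the shortest nonzero vector of $\Lambda_\mu$. Let $\Psi(\zeta) := \zeta/\lambda_{\mu,m}$, a similarity fixing $0$ that scales lengths by $\delta_\mu^{-1}$ and areas by $\delta_\mu^{-2}$, so that $\Psi(\Lambda_\mu) = \zahl + \ww\zahl$. After composing $\Psi$ with the reflections $\zeta\mapsto\bar\zeta$ and $\zeta \mapsto -\bar\zeta$ as needed (isometries fixing $0$, which preserve areas, the Voronoi structure, symmetric differences, and all modulus-based estimates below), I may assume $\ww \in \fdom_0$, whence $\im(\ww) = v_\mu$.

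Next I transport the perturbation. Since $(-1+\phi)(0) = -1 + \phi(0) = 0$, the conjugated map $\varphi := \Psi \circ (-1+\phi) \circ \Psi^{-1}$ satisfies $\varphi(0)=0$. As $\Psi$ is linear, $|\varphi(\eta) - \eta| = \delta_\mu^{-1}|(-1+\phi)(\zeta)-\zeta|$ and $|\varphi(\eta)| = \delta_\mu^{-1}|(-1+\phi)(\zeta)|$ for $\eta = \Psi(\zeta)$; dividing the three defining inequalities of Lemma~\ref{70} by $\delta_\mu$ then shows $\varphi$ is a local $(\epsilon_\mu, 3+3v_\mu, 4+4v_\mu)$-perturbation of $\zahl + \ww\zahl$. (The lattice points of $\Lambda_\mu$ inside $U(0,(4+4v_\mu)\delta_\mu)$ all lie in $W$, since their imaginary parts are $2\pi\fracpart{j\theta/2\pi}\in(-\pi,\pi)$, and inside $U(0,1)$ once $\mu$ is large, so $\phi$ and its Taylor estimate apply; (\ref{63}) and (\ref{69}) place $\mu$ in this range.) The remaining hypothesis $\epsilon_\mu \le \frac{1}{45+10v_\mu}$ of Proposition~\ref{66} is exactly (\ref{69}): using $v_\mu \le M_2$ and squaring $C_\alpha(4+4M_2)^2\sqrt{4\pi\alpha/(\sqrt3\,\mu)} \le \frac{1}{45+10M_2}$ rearranges to (\ref{69}).

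Applying Proposition~\ref{66} to $\zahl+\ww\zahl$ and $\varphi$ gives $|V(0,\zahl+\ww\zahl)\;\triangle\;V(0,\varphi(\zahl+\ww\zahl))| \le 12\epsilon_\mu(1+v_\mu)(5+3v_\mu)$. Since $\Psi$ carries $V(0,\Lambda_\mu)$ to $V(0,\zahl+\ww\zahl)$ and the perturbed cell $V(0,\phi(W\cap\Lambda_\mu)-1)$ to $V(0,\varphi(\zahl+\ww\zahl))$, scaling numerator and denominator of (\ref{24}) by the same $\delta_\mu^2$, the left side of (\ref{24}) equals the scale-free quantity above divided by $|V(0,\zahl+\ww\zahl)| = \im(\ww) = v_\mu$ (Lemma~\ref{20}). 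Using $\frac{\sqrt3}{2}\le v_\mu\le M_2$ and $\epsilon_\mu = C_\alpha(4+4M_2)^2\sqrt{4\pi\alpha/\sqrt3}\cdot\mu^{-1/2}$, I bound
\[
\frac{12\epsilon_\mu(1+v_\mu)(5+3v_\mu)}{v_\mu}
\le \frac{2}{\sqrt3}\cdot 12(1+M_2)(5+3M_2)\,\epsilon_\mu = \frac{C}{\sqrt\mu},
\]
with $C$ depending only on $\alpha$ (and the bound $M_1$ in (\ref{67})), which is (\ref{24}). I expect the only genuine obstacle to be the bookkeeping of this reduction — building $\Psi$, reducing $\ww$ into $\fdom_0$ by isometries, and checking that Lemma~\ref{70}'s perturbation parameters scale correctly under $\Psi$; once the $\delta_\mu^2$ cancellation is in place, matching the constants against (\ref{69}) is routine.
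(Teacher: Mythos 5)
Your proof is correct and follows essentially the same route as the paper: verify $\epsilon_\mu \le \frac{1}{45+10v_\mu}$ from (\ref{69}), combine Lemma~\ref{70} with Proposition~\ref{66} to bound the symmetric difference by $12\epsilon_\mu(1+v_\mu)(5+3v_\mu)\delta_\mu^2$, divide by $|V(0,\Lambda_\mu)| = v_\mu\delta_\mu^2$, and bound the constants via $\frac{\sqrt{3}}{2} \le v_\mu \le M_2$. The only difference is presentational: you spell out the similarity $\Psi(\zeta)=\zeta/\lambda_{\mu,m}$ and the reflections reducing $\ww$ into $\fdom_0$, a normalization the paper performs implicitly when it divides by $\delta_\mu^2$.
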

\begin{proof}
By (\ref{69}), we have $\epsilon_\mu \le \frac{1}{45+10v_\mu}$.
Proposition~\ref{66} and Lemma~\ref{70} imply that 
\[
 \frac{|V(0,\Lambda_\mu) \;\triangle\; V(0,\phi(W \cap \Lambda_\mu)-1)|}      {\delta_\mu^2}
 \le 12 \epsilon_\mu (1+v_\mu)(5+3v_\mu).
\]
Since $|V(0,\Lambda_\mu)| = v_\mu \delta_\mu^2$, we have
\begin{align*}
 & \frac{|V(0,\Lambda_\mu) \;\triangle\; V(0,\phi(W \cap \Lambda_\mu)-1)|}
 {|V(0,\Lambda_\mu)|} \\
 & \le 12 \epsilon_\mu \frac{(1+v_\mu)(5+3v_\mu)}{v_\mu} 
  \le 12 \epsilon_\mu (15+3v_\mu) 
 \le 12 \epsilon_\mu (15+3M_2) \\
 & = C \mu^{-1/2} ,
\end{align*}
where $C := 12C_\alpha (15+3M_2)(4+4M_2)^2
\sqrt{\frac{4\pi\alpha}{\sqrt{3}} }$.
\end{proof}

Lemma~\ref{20} implies that 
\begin{equation}
\label{59}
  \frac{|V(0,\Lambda_\mu)|}{(2\pi)^2}  = \frac{\alpha}{2\pi\mu}.
\end{equation}
Thus Lemma~\ref{73} reads
\begin{equation}
\label{64}
 |V(1,\Sigma_\mu)|
 = |V(0,\phi(W \cap \Lambda_\mu)-1)|
 = \frac{2\pi\alpha}{\mu} (1 + O(\mu^{-1/2})).
\end{equation}

\begin{theorem}
  \label{19}
  Suppose that the partial quotients $a_i$ in the continued
  fraction expansion of $\theta/2\pi$ is bounded.  Then
  \[ 
   |V(z_{\mu}, S_\mu)| = 2\pi\alpha \mu^{2\alpha-1}(1 + O(\mu^{-1/2})).
  \]
\end{theorem}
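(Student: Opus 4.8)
The plan is to combine the exact similarity-scaling identity for $|V(z_\mu,S_\mu)|$ with the perturbation estimate for the normalized spiral already recorded in \eqref{64}. Since $\Sigma_\mu = \mu^{-\alpha}\rme^{-\mu\theta\rmi}S_\mu$ is obtained from $S_\mu$ by a rotation together with a dilation by $\mu^{-\alpha}$, and Voronoi cells transform covariantly under similarities, areas scale by $\mu^{2\alpha}$; this is exactly \eqref{13}, giving $|V(z_\mu,S_\mu)| = \mu^{2\alpha}|V(1,\Sigma_\mu)|$. It therefore suffices to pin down the single normalized area $|V(1,\Sigma_\mu)|$.

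For that, I would compare $\Sigma_\mu$ with its linearization $\Lambda_\mu$. By Lemma~\ref{20} the linear cell has the exact area $|V(0,\Lambda_\mu)| = 2\pi\alpha/\mu$ recorded in \eqref{59}. The translated map $-1+\phi$ sends $\Lambda_\mu$ to the normalized spiral and, by Lemma~\ref{70}, is a local perturbation of size $\epsilon_\mu\delta_\mu$ with $\epsilon_\mu = O(\mu^{-1/2})$; Proposition~\ref{66} then bounds the symmetric difference of the two cells by $C\mu^{-1/2}|V(0,\Lambda_\mu)|$, which is precisely Lemma~\ref{73}. Together with the observation that for large $\mu$ the adjoined center $0\in\Sigma_\mu$ lies at distance $1$ from the site at $1$, while the cell itself shrinks like $\delta_\mu = O(\mu^{-1/2})$, this yields \eqref{64}, namely $|V(1,\Sigma_\mu)| = \tfrac{2\pi\alpha}{\mu}\bigl(1+O(\mu^{-1/2})\bigr)$. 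The boundedness hypothesis \eqref{67} enters here only through Lemma~\ref{68}, which keeps the reduced aspect ratio bounded, $v_\mu \le 1+\tfrac12 M_1$, so that $\epsilon_\mu$ and the implied constant are uniform in $\mu$.

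Finally I would simply multiply the two facts: by \eqref{13} and \eqref{64},
\[
 |V(z_\mu,S_\mu)| = \mu^{2\alpha}|V(1,\Sigma_\mu)| = \mu^{2\alpha}\cdot\frac{2\pi\alpha}{\mu}\bigl(1+O(\mu^{-1/2})\bigr) = 2\pi\alpha\,\mu^{2\alpha-1}\bigl(1+O(\mu^{-1/2})\bigr),
\]
which holds once $\mu$ exceeds the thresholds \eqref{63} and \eqref{69} and is harmless for an asymptotic statement. At the level of Theorem~\ref{19} itself I expect no genuine obstacle, since all of the analytic work is packaged into the perturbation bound \eqref{24}; the only point deserving care is the passage from \eqref{64} back to the geometric area $|V(1,\Sigma_\mu)|$, i.e. verifying that the center point of the normalized spiral is too far away to trim the cell at $1$ for large $\mu$.
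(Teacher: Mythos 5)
Your proposal is correct and takes essentially the same route as the paper: the paper's entire proof of Theorem~\ref{19} is the one-line combination of the scaling identity \eqref{13} with the asymptotic \eqref{64}, exactly as in your final display, with \eqref{64} obtained from Lemma~\ref{73} and \eqref{59} just as you describe. Your closing caveat---that one must check the adjoined center $0\in\Sigma_\mu$ does not trim the cell at $1$, so that $|V(1,\Sigma_\mu)| = |V(0,\phi(W\cap\Lambda_\mu)-1)|$---is a fair point of care that the paper passes over silently in asserting \eqref{64}, but it is part of the same argument, not a different one.
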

\begin{proof}
This follows from (\ref{13}) and (\ref{64}).
\end{proof}

Theorem~\ref{18} follows immediately from Theorem~\ref{19}.

\section*{Acknowledgement} 
The authors thank Shigeki Akiyama and Qinghui Liu for helpful comments
and fruitful discussions.  This work was partially supported by JSPS
Kakenhi Grant Numbers JP15K05011, JP18K13452, and the Research
Institute for Mathematical Sciences, a Joint Usage/Research Center
located in Kyoto University.


\end{document}